\newtheorem{thm}{Theorem}[section]
\newtheorem*{thmA}{Theorem A}
\newtheorem*{thmB}{Theorem B}
\newtheorem*{thmC}{Theorem C}
\newtheorem{lemma}[thm]{Lemma}
\newtheorem{cor}[thm]{Corollary}
\newtheorem{prop}[thm]{Proposition}
\newtheorem{question}[thm]{Question}
\theoremstyle{definition}
\newtheorem{defi}[thm]{Definition}
 \newtheorem{example}[thm]{Example}
\theoremstyle{remark}
\newtheorem{remark}[thm]{Remark}
\newcommand{\G}{\mathbb{G}}
\newcommand{\Z}{\mathbb{Z}}
\newcommand{\F}{\mathbb{F}}
\newcommand{\A}{\mathbb{A}}
\newcommand{\N}{\mathbb{N}}
\newcommand{\m}{\mathfrak{m}}
\newcommand{\Ac}{\mathcal{A}}
\newcommand{\EE}{\mathcal{E}}
\newcommand{\FF}{\mathcal{F}}
\newcommand{\GG}{\mathcal{G}}
\newcommand{\II}{\mathcal{I}}
\newcommand{\LL}{\mathcal{L}}
\newcommand{\PP}{\mathcal{P}}
\newcommand{\XX}{\mathcal{X}}
\newcommand{\OO}{\mathcal{O}}
\newcommand{\MM}{\mathcal{M}}
\newcommand{\MMb}{\overline{\mathcal{M}}}
\newcommand{\at}{\widetilde{\alpha}}
\newcommand{\tensor}{\otimes}
\DeclareMathOperator{\Vect}{Vect}
\DeclareMathOperator{\fMod}{fMod}
\DeclareMathOperator{\Spf}{Spf}
\DeclareMathOperator{\grcomod}{-grcomod}
\DeclareMathOperator{\modules}{\text{-}mod}
\DeclareMathOperator{\pr}{pr}
\DeclareMathOperator{\Ext}{Ext}
\DeclareMathOperator{\Hom}{Hom}
\DeclareMathOperator{\Spec}{Spec}
\DeclareMathOperator{\QCoh}{QCoh}
\DeclareMathOperator{\Coh}{Coh}
\DeclareMathOperator{\Pic}{Pic}
\DeclareMathOperator{\Mod}{Mod}
\DeclareMathOperator{\grmodules}{\text{-}grmod}
\DeclareMathOperator{\grmod}{\text{-}grmod}
\DeclareMathOperator{\rk}{rk}
\DeclareMathOperator{\res}{res}
\DeclareMathOperator{\ind}{ind}
\DeclareMathOperator{\cart}{Cart}
\def\co{\colon\thinspace}
\mathchardef\mhyphen="2D
\begin{document}
\title{Vector Bundles on the Moduli Stack of Elliptic Curves}
\author{Lennart Meier}
\maketitle

\begin{abstract}We study vector bundles on the moduli stack of elliptic curves over a local ring $R$. If $R$ is a field or a discrete valuation ring of (residue) characteristic not $2$ or $3$, all these vector bundles are sums of line bundles. For $R = \Z_{(3)}$, we construct higher rank indecomposable vector bundles and give a classification of vector bundles that are iterated extensions of line bundles. If $R = \Z_{(2)}$, we show that there are even indecomposable vector bundles of arbitrary high rank. 
\end{abstract}

\section{Introduction and Statement of Results}

Denote by $\MM_R$ the (uncompactified) moduli stack of elliptic curves over some (commutative) ring $R$. The aim of this note is to study vector bundles on $\MM_R$. This extends the classification of line bundles by Fulton and Olsson, who prove:
\begin{thm}[\cite{F-O10}]\label{ClassLine}If $R$ is a reduced ring or a $\Z[\frac12]$-algebra, then 
\begin{align*} \Z/12\times \Pic(\A^1_R) &\to \Pic(\MM_R) \\
([k], \EE) &\mapsto \omega^k \tensor \pi^*\EE \end{align*}
is an isomorphism for $\pi\co \MM_R \to \A^1_R$ the map defined by the $j$-invariant (which is isomorphic to the canonical map into the coarse moduli space).\end{thm}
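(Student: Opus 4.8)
The plan is to give the source $\Z/12 \times \Pic(\A^1_R)$ the evident product group structure, $([k],\EE)+([k'],\EE') = ([k+k'],\EE\tensor\EE')$, and then to prove the displayed map is a well-defined homomorphism and a bijection. The homomorphism property is immediate from $\pi^*(\EE\tensor\EE')\cong\pi^*\EE\tensor\pi^*\EE'$ once the map is well defined, i.e.\ once $\omega^{\tensor 12}\cong\OO_{\MM_R}$. This holds over any base: on the \emph{uncompactified} stack the discriminant is a nowhere-vanishing global section $\Delta\in H^0(\MM_R,\omega^{\tensor 12})$, and an invertible section trivializes the line bundle it lives in. Hence $[k]\mapsto\omega^{k}$ descends to $\Z/12$ and the map is a homomorphism; it remains to establish injectivity and surjectivity.

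The computational core is the case $R=\kappa$ a field. Here I would use the Weierstrass presentation $\MM_\kappa=[W_\kappa/G]$, where $W_\kappa\subset\A^5_\kappa$ is the open locus $\Delta\neq 0$ in the affine space of Weierstrass coefficients and $G=\G_m\ltimes\G_a^3$ is the group of admissible coordinate changes, so that $\Pic(\MM_\kappa)=\Pic^{G}(W_\kappa)$. The unipotent radical $\G_a^3$ has no nontrivial characters and contributes nothing, reducing the problem to the $\G_m$-weight. Since $\A^5_\kappa$ has trivial Picard group and $W_\kappa$ is the complement of the irreducible discriminant hypersurface, $W_\kappa$ again has trivial Picard group and its invertible eigenfunctions are exactly $\kappa^\times$ (weight $0$) and the powers of $\Delta$ (weight $12$). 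Thus $\Pic^{G}(W_\kappa)$ is the character group $\Z$ modulo the weights of units, namely $\Z/12$, generated by $\omega$ in weight $1$; in particular $\omega$ has order exactly $12$ over every field, which injectivity below requires.

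To pass from fields to a general local $R$ I would exploit two properties of $\pi$. First, as (the identification of) the coarse moduli map it satisfies $\pi_*\OO_{\MM_R}=\OO_{\A^1_R}$, so by the projection formula $\pi^*$ is split injective on Picard groups with retraction $\pi_*$. For injectivity, if $\omega^{k}\tensor\pi^*\EE\cong\OO$, base-change to the residue field $\kappa$: since $\Pic(\A^1_\kappa)=0$ we get $\omega^{k}\cong\OO$ on $\MM_\kappa$, whence $12\mid k$ by the field case; then $\omega^{k}\cong\OO$ already over $R$, so $\pi^*\EE\cong\OO$ and $\EE\cong\pi_*\pi^*\EE\cong\OO$. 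For surjectivity, given $L\in\Pic(\MM_R)$, restrict to the closed fibre to read off an exponent $k\in\Z/12$ with $L|_{\MM_\kappa}\cong\omega^{k}$; after replacing $L$ by $L\tensor\omega^{-k}$ we may assume $L$ is trivial on the closed fibre, and the task becomes to show such an $L$ is pulled back from the coarse space, i.e.\ that the counit $\pi^*\pi_*L\to L$ is an isomorphism with $\pi_*L\in\Pic(\A^1_R)$.

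The hard part is this last descent, and it is exactly where the hypotheses enter. In the equivariant picture it amounts to showing that the invertible eigenfunctions of $R[a_1,a_2,a_3,a_4,a_6][\Delta^{-1}]$ are still only $R^\times\cdot\Delta^{\Z}$ up to what is already recorded by $\Pic(\A^1_R)$, so that $\Pic^{G}(W_R)$ is generated by $\omega$ together with $\pi^*\Pic(\A^1_R)$. Two features obstruct a naive argument. Over a $\Z[\tfrac12]$-algebra of residue characteristic $3$ the translation action of $\G_a$ fails to be free and $b_2$ survives as an extra invariant, so the quotient $W_R/\G_a$ must be analyzed by hand rather than identified with the $(c_4,c_6)$-plane. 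Over a merely reduced ring one cannot invoke homotopy invariance of the Picard group freely, and one must instead control the non-free locus and the extra automorphisms at $j=0$ and $j=1728$ by reducing to fields and using reducedness to rule out line bundles supported on nilpotent thickenings. It is precisely the availability of one of these two hypotheses that makes the descent go through; their failure for non-reduced $\Z_{(2)}$- and $\Z_{(3)}$-algebras is what opens the door to the exotic higher-rank phenomena studied in the rest of the paper.
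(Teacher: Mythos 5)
The paper does not actually prove this statement: it is quoted from Fulton--Olsson \cite{F-O10}, so there is no internal proof to compare against and your argument has to stand on its own. Its first half does stand: the trivialization of $\omega^{\otimes 12}$ by the nowhere-vanishing section $\Delta$, the field case via the Weierstrass presentation $[W_\kappa/G]$ (triviality of $\Pic(W_\kappa)$, units $\kappa^\times\cdot\Delta^{\Z}$, cocycles reducing to characters of $\G_m$ with the weight-$12$ character made principal by $\Delta$), and the injectivity argument for local $R$ using $\pi_*\OO_{\MM_R}\cong\OO_{\A^1_R}$ are all correct, modulo the (true but unverified) irreducibility of $\Delta$ in characteristics $2$ and $3$.

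The genuine gap is surjectivity, which is the actual content of the theorem. After normalizing $L$ to be trivial on the closed fibre, you say the task is to show the counit $\pi^*\pi_*L\to L$ is an isomorphism, identify this as ``the hard part'', list the obstructions in residue characteristics $2$ and $3$, and then conclude that ``the availability of one of these two hypotheses makes the descent go through''. That final sentence is a restatement of the theorem, not an argument: nothing in the proposal shows how reducedness, or invertibility of $2$, actually kills the extra equivariant units and cocycles over a non-field base (this is where Fulton--Olsson do real work, and where the statement genuinely fails, e.g.\ over $k[\epsilon]/(\epsilon^2)$ with $k$ of characteristic $2$). Two further problems: the theorem is stated for arbitrary reduced rings or $\Z[\frac12]$-algebras, but your reduction only treats local $R$ --- for general $R$ there is no single ``closed fibre'', and one must argue that the exponent $k$ and the sheaf $\pi_*(L\tensor\omega^{-k})$ glue over $\Spec R$; and your closing claim that the classification fails for non-reduced $\Z_{(3)}$-algebras is incorrect, since every $\Z_{(3)}$-algebra is a $\Z[\frac12]$-algebra and hence covered by the theorem --- the failure occurs only at the prime $2$.
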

Here, $\omega$ denotes the line bundle on $\MM_R$ defined by the following property: Let $p\co E\to X$ be an elliptic curve over a scheme $X$. Then the evaluation $\omega_X$ at the map $X \to \MM_R$ classifying $E$ is given by $p_*\Omega^1_{E/X}$. 

Note that we have the following special case of Theorem \ref{ClassLine}:
\begin{cor}If $R$ is a regular local ring, the map $\Z/12 \to \Pic(\MM_R)$, $[k] \mapsto \omega^k$, is an isomorphism.\end{cor}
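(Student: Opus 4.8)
The plan is to deduce this directly from Theorem \ref{ClassLine}, so the only real content is to check that its hypothesis applies and then to compute $\Pic(\A^1_R)$. First I would observe that a regular local ring $R$ is in particular an integral domain, hence reduced, so Theorem \ref{ClassLine} is available and supplies an isomorphism $\Z/12 \times \Pic(\A^1_R) \xrightarrow{\sim} \Pic(\MM_R)$ sending $([k], \EE)$ to $\omega^k \tensor \pi^*\EE$. It therefore suffices to show that $\Pic(\A^1_R)$ vanishes: then the only choice for $\EE$ is the structure sheaf $\OO$, so that $\pi^*\OO = \OO$, and the displayed map restricts precisely to $[k] \mapsto \omega^k$, which is the map in the statement.

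The heart of the matter is thus the identity $\Pic(\A^1_R) = 0$ for $R$ regular local. I would argue in two steps. Writing $\A^1_R = \Spec R[t]$, the first step is homotopy invariance of the Picard group: for a normal domain $R$ the pullback $\Pic(R) \to \Pic(R[t])$ is an isomorphism. Since a regular local ring is normal, this reduces the computation to $\Pic(R)$. The second step is that $\Pic(R) = 0$; this follows because a regular local ring is a unique factorization domain by the Auslander--Buchsbaum theorem, and the divisor class group, hence the Picard group, of a UFD is trivial. Combining the two steps gives $\Pic(\A^1_R) \cong \Pic(R) = 0$.

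I expect the homotopy-invariance step to be the only genuinely non-formal input; everything else is either bookkeeping or a standard citation. One should take care to invoke it in the correct generality (normal, or at least seminormal, domains), since $\Pic(R[t]) \cong \Pic(R)$ can fail for non-reduced or non-seminormal $R$; the regularity of $R$ is precisely what guarantees both the normality needed here and the UFD property needed in the second step. Granting this, the corollary follows immediately by substituting $\Pic(\A^1_R) = 0$ into the isomorphism of Theorem \ref{ClassLine}.
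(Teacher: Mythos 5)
Your proposal is correct and follows essentially the same route as the paper: the paper also deduces the corollary from Theorem \ref{ClassLine} by showing $\Pic(\A^1_R) \cong \Pic(\Spec R)$ (citing Hartshorne II.6.6 and II.6.15, i.e.\ homotopy invariance of the divisor class group plus $\mathrm{Cl} \cong \Pic$ for locally factorial schemes), with triviality of $\Pic(\Spec R)$ then being immediate. The only cosmetic difference is that you invoke homotopy invariance of $\Pic$ for normal (seminormal) domains and Auslander--Buchsbaum for $\Pic(R)=0$, whereas the paper works through class groups and could simply use that every line bundle over a local ring is trivial.
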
                                                                                                                              
\begin{proof}By \cite{Har77}, II.6.6 and II.6.15, we have $\Pic(\A^1_R) \cong \Pic(\Spec R)$.\end{proof}

We will take some first steps towards the study of vector bundles of higher rank on the moduli stack of elliptic curves. We will be able to give a complete classification only under severe restriction on the local ring $R$, but have several partial results. Before we go into the details, we want to fix the definition of a vector bundle we want to use:
\begin{defi}Let $(\XX,\OO_\XX)$ be a ringed site. Then a \textit{vector bundle} on $\XX$ is a locally free $\OO_\XX$-module of finite and constant rank.\end{defi}

Our first main theorem concerns the situation at primes bigger than $3$:

\begin{thmA}\label{TheoremA}Let $R$ be a discrete valuation ring or a field with (residue) characteristic not equal to $2$ or $3$. Then every vector bundle on $\MM_R$ is a sum of line bundles.
\end{thmA}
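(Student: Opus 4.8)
The plan is to exploit the explicit quotient presentation of $\MM_R$ that becomes available once $2$ and $3$ are invertible. Since $R$ is local with residue characteristic different from $2$ and $3$, the elements $2,3$, hence also $6$ and $12$, are units in $R$. Every elliptic curve over an $R$-algebra can then be put in short Weierstrass form $y^2 = x^3 + a_4 x + a_6$, and this identifies $\MM_R \cong [\Spec A/\G_m]$ with $A = R[a_4,a_6][\Delta^{-1}]$, where $\deg a_4 = 4$, $\deg a_6 = 6$ and $\Delta$ is the discriminant of degree $12$. Under this identification a vector bundle on $\MM_R$ is exactly a finitely generated graded projective $A$-module, the line bundle $\omega^n$ corresponds to the shift $A(n)$, and by the Corollary to Theorem~\ref{ClassLine} (note $\Pic(\A^1_R)=0$ since $R$ is regular local) the line bundles are precisely the $A(n)$, $n \in \Z/12$. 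Thus the theorem reduces to the purely algebraic assertion that every finitely generated graded projective $A$-module is a direct sum of shifts $A(n)$.

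The second step is to trade the grading group $\G_m$ for a finite one. As $\Delta$ is an invertible homogeneous element of degree $12$, the slice $\{\Delta = 1\}$ gives $\MM_R \cong [\Spec \bar A/\mu_{12}]$, where $\bar A = R[a_4,a_6]/(\Delta - 1)$ is a smooth affine $R$-algebra of relative dimension $1$ carrying a $\Z/12$-grading, and $\mu_{12}$ is étale because $12 \in R^\times$. Vector bundles now correspond to finitely generated $\Z/12$-graded projective $\bar A$-modules, and the goal is to show each is a sum of graded line bundles $\bar A(n)$, $n \in \Z/12$. The point of passing to the slice is that $\bar A$ has Krull dimension $\dim R + 1$, which is $1$ when $R$ is a field and $2$ when $R$ is a discrete valuation ring.

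For $R = k$ a field, $\bar A$ is one-dimensional, and I would invoke the graded (equivalently $\mu_{12}$-equivariant) analogues of Serre's splitting theorem and Bass's cancellation theorem: over a ring of dimension $1$, a projective module of rank $\geq 2$ splits off a rank-one summand. Inducting on the rank reduces everything to rank one, and rank-one graded projectives are exactly the elements of $\Pic(\MM_k) = \Z/12$, i.e. the $\omega^n$. This disposes of the field case with only standard commutative-algebra input.

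For $R$ a discrete valuation ring the same cancellation input, now over the two-dimensional ring $\bar A$, reduces an arbitrary bundle to graded rank $\leq 2$; rank one is again $\Pic(\MM_R) = \Z/12$, so everything hinges on the rank-two case. This is the main obstacle, and it is precisely where the characteristic hypothesis is essential—for $R = \Z_{(2)}$ or $\Z_{(3)}$ indecomposable higher-rank bundles genuinely exist. Here I would restrict a rank-two bundle $\VV$ to the special and generic fibres $\MM_k$ and $\MM_K$, where it splits by the field case, compare the two splitting types, and show that when $6$ is invertible the gluing data over $R$ is unobstructed—concretely, that the relevant groups $\Ext^1(\omega^m,\omega^n)$ over $R$ vanish in the range that matters—so that a compatible splitting descends to $\MM_R$. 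Computing these extension groups, and ruling out a jump in splitting type between the two fibres, is the crux of the whole argument.
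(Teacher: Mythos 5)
Your setup is fine as far as it goes: the short Weierstrass presentation $\MM_R \simeq \Spec R[a_4,a_6][\Delta^{-1}]//\G_m$, the translation of vector bundles into finitely generated graded projective modules, and the slice $\{\Delta = 1\}$ exhibiting $\MM_R$ as a $\mu_{12}$-quotient of a smooth affine curve (resp.\ surface) are all correct, and this is essentially the paper's starting point too. But your field case then rests entirely on ``the graded (equivalently $\mu_{12}$-equivariant) analogues of Serre's splitting theorem and Bass's cancellation theorem,'' and these do not exist off the shelf. Serre's theorem produces a unimodular element in a projective module of rank exceeding the Krull dimension; you need a \emph{homogeneous} unimodular element, and the prime-avoidance and basic-element arguments underlying Serre and Bass do not respect a $\Z/12$-grading. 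Equivariant splitting and cancellation statements of this kind are exactly the delicate point, not standard commutative-algebra input; note also that your $\bar A$ is $\Z/12$-graded, so the one genuinely standard graded splitting result --- that a graded projective module over a ring graded in non-negative degrees with degree-zero part a field is a sum of shifts (Lam II.4.6, which is what the paper uses) --- does not apply to it. The paper's field-case proof circumvents precisely this: it extends the bundle to a reflexive sheaf on the compactification $\PP_K(4,6)$, pulls back equivariantly to $\A^2_K$, and invokes Hartshorne's theorem that a reflexive sheaf on a regular $2$-dimensional scheme is locally free, thereby landing in non-negatively $\Z$-graded projective modules over $K[c_4,c_6]$, where splitting into shifts is elementary. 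Until you prove your graded Serre/Bass statements, the field case has a hole exactly where the content of the theorem lies.

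The DVR case is, by your own account, a plan rather than a proof, and it locates the difficulty in the wrong place. First, the generic and special fibres do not cover $\MM_R$ in any topology, so ``a compatible splitting descends'' presupposes a formal-glueing mechanism (of Ferrand--Raynaud/Beauville--Laszlo type) identifying bundles on $\MM_R$ with bundles on the two pieces plus glueing data on a formal punctured neighbourhood; and splitness of both restrictions does not imply splitness of the glued object --- on $\mathbb{P}^1$ every bundle is trivial on the two affine charts, yet the glueing matrix produces all the $\OO(n)$ --- so one needs a factorization theorem controlling the glueing matrix. That factorization input is exactly what the paper supplies, in the geometric rather than the arithmetic direction: formal glueing along the cusp $\Delta = 0$ of $\PP_R(4,6)$ (the appendix) together with Horrocks' theorem that projective $R((q))$-modules are free, which is used to \emph{extend} a bundle from $\MM_R$ to $\PP_R(4,6)$. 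Second, the Ext-groups you call the crux are not an issue at all: once $6$ is invertible, $\Ext^1(\omega^m,\omega^n) \cong H^1(\MM_R;\omega^{n-m})$ vanishes for \emph{all} $m,n$ --- indeed all positive-degree Ext-groups between vector bundles vanish, since quasi-coherent sheaves on $\Spec A[\Delta^{-1}]//\G_m$ are graded modules and global sections is exact (Lemma \ref{gradetriv}); the characteristic hypothesis is spent in obtaining this presentation, not in any subsequent cohomology computation. So once a bundle on $\MM_R$ admits a sub-line-bundle with locally free quotient, it splits for free; the real work, which your sketch never addresses, is producing such a sub-line-bundle at all. The paper does this by extending the bundle to $\PP_R(4,6)$ and splitting off a nowhere-vanishing section there, lifted from the special fibre by a completion and cohomology-vanishing argument.
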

The author learned of this result in the case of a field from Angelo Vistoli, where it is also true for the compactified moduli stack. Theorem A will be proven in the more general context of weighted projective and affine lines in Section 3. As the question whether every vector bundle on $\A^1_R$ for a regular local ring decomposes into (trivial) line bundles is the open Bass--Quillen conjecture (see \cite[VIII.6]{Lam06}), an extension of Theorem A to arbitrary regular local rings does not seem feasible in the moment, though some generalizations are certainly possible. As our applications to stable homotopy theory (see \cite{Mei12}) only require the case $R= \Z_{(p)}$, we stay with discrete valuation rings. Note also that the results in \cite{H-S99b} indicate that Theorem A might not be true for the compactified moduli stack if $R$ is not a field. \\

The rest of this article will be concerned with the situation at $2$ and $3$, where not every vector bundle splits into line bundles. The easiest examples of indecomposable higher rank vector bundles are non-trivial extensions of line bundles. At the prime $3$ we achieve a classification of all vector bundles which are iterated extensions of line bundles: 
\begin{thmB}
Let $\EE$ be a vector bundle on $\MM_{(3)} = \MM_{\Z_{(3)}}$ that is an iterated extension of line bundles. Then $\EE$ is a sum of vector bundles of the form $\omega^k$, $\omega^k\tensor E_\alpha$ and $\omega^k\tensor f_*f^*\OO$. 
\end{thmB}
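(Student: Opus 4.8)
The plan is to reduce the classification to the representation theory of the automorphism group at the single point of $\MM_{(3)}$ where semisimplicity fails. Since $2$ is invertible over $\Z_{(3)}$, the automorphism group of an elliptic curve has non-semisimple mod-$3$ representation theory only at the supersingular point $P$ with $j=0$ in characteristic $3$, whose automorphism group $G$ has order $12$ and contains a cyclic subgroup $C_3$. (At the $j=0$ point of the generic fibre the group $C_6\supseteq C_3$ still occurs, but there $3$ is invertible and its representations are semisimple, so it causes no trouble.) Because $\Z_{(3)}$ is regular local, the Corollary to Theorem~\ref{ClassLine} identifies the line bundles with the $\omega^k$, $k\in\Z/12$, so extensions are controlled by $\Ext^1(\omega^a,\omega^b)=H^1(\MM_{(3)};\omega^{b-a})$. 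First I would compute these groups from the structure of $H^*(\MM_{(3)};\omega^{\otimes *})$ at $3$: they are all torsion, and $H^1(\MM_{(3)};\omega^n)\cong\Z/3$, generated by a $\Delta$-power multiple of $\alpha$, exactly when $n\equiv 2\pmod{12}$, and vanish otherwise. Away from $P$ the stack is tame, so by the tameness arguments underlying Theorem A every iterated extension of line bundles there is a sum of the $\omega^k$; thus all the nontrivial behaviour is concentrated at $P$.

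Next I would pass to the complete local model of $\MM_{(3)}$ at $P$, a quotient stack $[\Spec\hat A/G]$ on which a vector bundle becomes a $G$-equivariant $\hat A$-module. The prime-to-$3$ part of $G$ only records the grading encoded by the twists $\omega^k$, so the essential content is the action of $C_3$, and the key input is the classical integral representation theory of $C_3$ (Diederichsen--Reiner): over $\Z_{(3)}$ the group $C_3$ has exactly three indecomposable lattices, of ranks $1$, $2$ and $3$, namely the trivial lattice, $\Z_{(3)}[\zeta_3]$, and the regular representation $\Z_{(3)}[C_3]$; on the special fibre these are the indecomposable modules of $\F_3[C_3]\cong\F_3[t]/(t^3)$. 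I would then match the three indecomposables with the three asserted building blocks: the trivial lattice with $\omega^k$, the rank-two lattice with $\omega^k\tensor E_\alpha$, and the regular representation with $\omega^k\tensor f_*f^*\OO$, where $f$ is the degree-$3$ cover whose restriction to $P$ realises $\Z_{(3)}[C_3]$ and which splits away from $P$. By Krull--Schmidt the local model of any iterated extension is a direct sum of these, and in particular nothing of rank $>3$ occurs. Notably the Yoneda square of $\alpha$ vanishes (indeed $H^2(\MM_{(3)};\omega^4)=0$), so both the existence of the length-three block $\omega^k\tensor f_*f^*\OO$ and the cutoff at rank three are dictated by the finer local algebra $\F_3[t]/(t^3)$ rather than by $\Ext^1$ and $\Ext^2$ alone; it is this \emph{bounded representation type}, as opposed to the wild type of the order-$24$ automorphism group at $2$, that makes the list finite.

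Finally I would globalise the local decomposition, and this is where I expect the main difficulty. The three building blocks are each globally defined, restrict to a sum of $\omega^k$ away from $P$, and have the prescribed local model at $P$; conversely an iterated extension $\EE$ is pinned down by its restriction away from $P$ together with its local model at $P$. I would argue by induction on the rank, peeling off one indecomposable summand at a time: read off a summand of the local model at $P$, realise the corresponding sub- or quotient bundle globally using the computed $\Ext$-groups, and split it off after checking that the relevant obstruction in $H^1(\MM_{(3)};\omega^n)$ vanishes in the needed weight. The obstacle is that a splitting of the local model at $P$ need not descend to a global splitting: I must show that restriction to the local model at $P$ reflects direct summands on the subcategory of iterated extensions, i.e.\ that the only nonzero $\Ext^1$ among the building blocks are those forced by the $C_3$ at $P$, so that no indecomposable can be ``spread out'' over $\MM_{(3)}$. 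Transporting Krull--Schmidt from the local model back to $\MM_{(3)}$, while keeping track of all the $\omega^k$-twists and of the prime-to-$3$ part of $G$, is the crux.
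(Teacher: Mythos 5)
Your strategy is genuinely different from the paper's (which never completes at the supersingular point: it computes all Ext-groups globally via Bauer's cohomology of the Weierstrass Hopf algebroid and the $S_3$-cover $\MM(2)_{(3)}\to\MM_{(3)}$, and then runs an induction on rank with diagram chases), but as it stands it has two genuine gaps. First, the local input is misidentified. Vector bundles on the completion of $\MM_{(3)}$ at the supersingular point are $G$-equivariant modules over the deformation ring $\hat{A}$, a power series ring over a ring of Witt vectors on which $G$ (in particular the $C_3$) acts \emph{non-trivially}; these are lattices over a twisted group algebra, not $\Z_{(3)}[C_3]$-lattices, so Diederichsen--Reiner does not apply. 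That this equivariant category has exactly three indecomposables of ranks $1,2,3$ is not a classical fact you can quote; it is essentially a reformulation of what you are trying to prove, and establishing it would require an argument of comparable difficulty to the paper's Ext computations.

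Second, the globalisation step, which you yourself flag as ``the crux,'' is exactly where the paper's entire content lies, and it is not a formality. You need that restriction to the complete local model reflects direct-sum decompositions on iterated extensions of line bundles, i.e.\ that no indecomposable is ``spread out'' over $\MM_{(3)}$; nothing in your outline supplies this. The paper's substitute is the suite of global computations in Section \ref{examples} (Propositions \ref{CohEa}, \ref{ExtEaa} and \ref{EaExtension}) together with a minimality induction in Section \ref{Classifying}, and Proposition \ref{EaExtension} shows why naive summand-peeling is dangerous: the unique non-split extension of $E_\alpha\tensor\omega^{-2}$ by $E_\alpha$ has \emph{decomposable} total space $f_*f^*\OO\oplus\omega^{-2}$, so non-split extensions and indecomposability do not track each other, and a Krull--Schmidt argument at the completion cannot by itself control which global bundles occur. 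Until you can prove both the equivariant local classification over $\hat{A}$ and the summand-reflection statement, the proposal is a plausible program rather than a proof.
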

Here, the vector bundle $E_\alpha$ is of rank $2$ and will be introduced in Subsection \ref{ExIndec}. The vector bundle $f_*f^*\OO$ is of rank $3$ and arises from the forgetful map $f\co\MM_0(2)_{(3)} \to \MM_{(3)}$ from the moduli stack of elliptic curves with one chosen point of exact order $2$.

The situation at the prime $2$ seems to be more involved and here we can prove that there are actually infinitely many indecomposable vector bundles:

\begin{thmC}There are infinitely many indecomposable vector bundles on $\MM_{(2)} = \MM_{\Z_{(2)}}$ (of arbitrary high rank).\end{thmC}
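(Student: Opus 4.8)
The plan is to concentrate the problem at the unique supersingular point of $\MM_{\F_2}$, where the automorphism group is large enough to carry a very rich (infinite) modular representation theory. Over $\overline{\F}_2$ the supersingular elliptic curve has automorphism group $G \cong \mathrm{SL}_2(\F_3)$, whose $2$-Sylow subgroup is the quaternion group $Q_8$. Since $Q_8$ is not cyclic, a theorem of Higman shows that $\overline{\F}_2[G]$ has infinite representation type; by the first Brauer--Thrall theorem it then admits indecomposable modules of arbitrarily large $\overline{\F}_2$-dimension. Fix such a sequence $V_1, V_2, \dots$ of pairwise non-isomorphic indecomposable $G$-representations with $\dim V_n \to \infty$. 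These representations will be the source of the indecomposable bundles.

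The link between bundles and representations is the restriction functor $\Vect(\MM_{(2)}) \to \operatorname{Rep}_{\overline{\F}_2}(G)$, given by base change to $\overline{\F}_2$ followed by pullback along the residual gerbe $j\co BG \hookrightarrow \MM_{\overline{\F}_2}$ of the supersingular point. Granting two statements --- (i) \emph{realization}: each $V_n$ is isomorphic to $j^*\EE_n$ for some $\EE_n \in \Vect(\MM_{(2)})$, and (ii) \emph{reflection of indecomposability}: $j^*\EE$ indecomposable implies $\EE$ indecomposable --- the theorem follows at once: the $\EE_n$ are indecomposable, and since their ranks equal $\dim V_n$ and are unbounded, they are pairwise non-isomorphic of arbitrarily high rank.

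For (i) I would argue in the complete local picture. The completed local stack of $\MM_{(2)}$ at the supersingular point is a quotient $[\Spf A/G]$, where $A$ is the ($G$-equivariant) deformation ring of the supersingular curve; a $G$-equivariant free module $V \otimes A$, with $G$ acting diagonally, is a vector bundle on $[\Spf A/G]$ whose restriction to the closed gerbe $BG$ is $V$. The remaining step is to glue this formal-local bundle to a bundle on the ordinary locus across the punctured neighbourhood. Over the punctured neighbourhood the generic automorphisms collapse to $\{\pm 1\}$, so the gluing data can be arranged by matching ranks with a sum of powers of $\omega$; a formal-gluing (Beauville--Laszlo type) argument then assembles a global $\EE_n$ on $\MM_{(2)}$.

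For (ii) I would analyze the ring map $\End_{\MM_{(2)}}(\EE) \to \End_G(j^*\EE)$ and show that idempotents are detected at the supersingular gerbe, so that $\End_G(V)$ being local --- which holds because $V$ is indecomposable --- forces $\End_{\MM_{(2)}}(\EE)$ to contain no nontrivial idempotents. The representation-theoretic input is entirely standard, so the main obstacle is the geometry of steps (i) and (ii): converting the formal, equivariant data into honest global bundles on $\MM_{(2)}$, and ruling out idempotent endomorphisms supported on the ordinary locus that could split $\EE$ without being visible at the supersingular point. Controlling $\End_{\MM_{(2)}}(\EE)$ precisely enough --- presumably via the normal/defect-group structure of $G$ and faithfully flat descent along the $j$-line --- is where I expect the real work to lie.
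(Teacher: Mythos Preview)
Your instinct to localize at the supersingular point and exploit the wild representation type of $Q_8 \subset SL_2(\F_3)$ is exactly the paper's idea, but you have the difficulty backwards. Step~(ii) is trivial: if $\EE = \EE_1 \oplus \EE_2$ nontrivially on a connected stack, both summands have positive rank, and rank is preserved by pullback, so $j^*\EE$ splits nontrivially as well. There are no hidden idempotents ``supported on the ordinary locus'' for vector bundles.

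The genuine gap is step~(i). You assert that on the punctured formal neighbourhood the bundle $V\otimes A$ becomes a sum of powers of $\omega$ and can therefore be glued to a matching bundle on $U$. But this is the prime $2$, precisely where Theorem~A fails: nothing tells you that bundles on $U$ or on $\hat U$ are sums of line bundles. Over $\Z_{(2)}$ the formal neighbourhood of the supersingular point is moreover two-dimensional (one deformation direction in characteristic $2$, one direction lifting to characteristic $0$), so its punctured version is not affine and bundles there are not readily classified. Finally, the $SL_2(\F_3)$-equivariant deformation picture you invoke lives over $W(\overline{\F}_2)$, and you have not addressed the descent back to $\Z_{(2)}$.

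The paper sidesteps all gluing by producing bundles from the \emph{global} classifying map $\pi\co \MM_{(2)} \to B\,GL_2(\F_3)$ of the level-$3$ torsor $\MM(3)_{(2)}\to\MM_{(2)}$, which exists over $\Z_{(2)}$: the pullback $I=\pi^*$ turns any integral $G=GL_2(\F_3)$-lattice directly into a vector bundle on $\MM_{(2)}$. The paper then does \emph{not} attempt to realize a prescribed indecomposable at the supersingular fibre. Instead it takes the Heller--Reiner integral $C_2\times C_2$-lattices $M_n$ of rank $2n+1$ (whose mod-$2$ reductions remain indecomposable as $Q$-modules via $Q=Q_8\twoheadrightarrow C_2\times C_2$), induces up to $G$, and writes $I(\ind_Q^G M_n)=\bigoplus_j a_jY_j$ with $Y_j$ indecomposable on $\MM_{(2)}$. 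Restricting to the supersingular point and further to $Q$ gives $\res_Q^G\ind_Q^G\overline{M_n}\cong\bigoplus_{G/Q}\overline{M_n}$; Krull--Remak--Schmidt for finite $\overline{\F}_2[Q]$-modules then forces the indecomposable $\overline{M_n}$ to lie inside a single $R(Y_j)$, whence $\rk Y_j\geq 2n+1$. The realization problem you pose is thus replaced by a pigeonhole argument on the restriction side.
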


This indicates that a complete classification of vector bundles on $\MM_{(2)}$ might be quite difficult, while the classification of vector bundles on $\MM_{(3)}$ could be achieved by a positive answer to the following question:
\begin{question}Is every vector bundle on $\MM_{(3)}$ an iterated extension of line bundles?\end{question}

At last, we give an overview over the structure of this article. The second section provides some basics about weighted projective stacks. The third section proves Theorem A, first for fields and then for discrete valuation rings. This will be done in the more general context of weighted projective and affine lines. In Section \ref{examples} we will give a detailed (cohomological) analysis of some low-rank vector bundles on $\MM_{(3)}$ and their extensions. In Section \ref{Classifying} we will exploit this to show that we have already constructed all iterated extensions of line bundles, i.e.\ we prove Theorem B. In Section \ref{vec2}, we see how integral representations of $GL_2(\F_3)$ give rise to vector bundles on $\MM_{(2)}$ and use representation-theoretic arguments to prove Theorem C. In the appendix, we will review some results about quasi-coherent sheaves and completions used in Section \ref{VBA23}.  

\subsection*{Acknowledgements}
Significant parts of this article are taken from my thesis \textit{United Elliptic Homology} written under the supervision of Stefan Schwede -- to him belongs my gratitude for his guidance. I also want to thank Angelo Vistoli, Jochen Heinloth and Peter Scholze for helpful e-mail exchanges, Akhil Mathew and Martin Brandenburg for their mathoverflow answers, the referee for his or her suggestions and Viktoriya Ozornova for reading preliminary versions. Further thanks belong to the GRK 1150 and the Deutsche Telekom Stiftung for monetary support. 

\section{Basics About Weighted Projective Stacks}\label{ProjectiveLines}
Away from $2$ and $3$, the moduli stack of elliptic curves is a weighted projective line and it is more natural to study vector bundles in this context. For the convenience of the reader, we define in this section weighted projective stacks, prove some of their properties and compute their cohomology. This is probably all well-known and we claim no originality here. 

For $a_0,\dots, a_n \in \N$ and a commutative ring $R$, the \textit{weighted projective stack} $\PP_R(a_0,\dots, a_n)$ is the (stack) quotient of $\A_R^n-\{0\}$ by the multiplicative group $\G_m$ under the action which is the restriction of the map
\begin{eqnarray*} \phi\co \G_m\times \A_R^{n+1} &\to& \A_R^{n+1} \\
 \Z[t, t^{-1}]\otimes R[t_0,\dots, t_n] &\leftarrow& R[t_0,\dots, t_n] \\
 t^{a_i}\tensor t_i &\mapsfrom& t_i
\end{eqnarray*}
to $\G_m\times (\A^{n+1}_R-\{0\})$. Here, $\A_R^{n+1}-\{0\}$ denotes the complement of the zero point, i.e.\ of the common vanishing locus of all $t_i$. On geometric points, the action corresponds to the map $(t, t_0,\dots, t_n) \mapsto (t^{a_0}t_0,\dots, t^{a_n}t_n)$. In the special case of $n=1$ we speak of a \textit{weighted projective line}.

Recall that there is an equivalence between affine schemes with $\G_m$-action and graded rings given as follows: If $\Spec A$ has a $\G_m$-action, an element $a \in A$ is homogeneous of degree $i$ if and only if $a\mapsto a \tensor t^i$ under the map $A \to A \tensor \Z[t,t^{-1}]$ corresponding to the action map $\Spec A \times \G_m \to \Spec A$. Under this correspondence, $\phi$ corresponds thus to the grading $|t_i| = a_i$. 

I learned the following proof from Akhil Mathew. 
\begin{prop}\label{weightedproper}For every $a_0,\dots, a_n \in \N$ and every commutative ring $R$, the projective stack $X = \PP_R(a_0,\dots, a_n)$ is a proper and smooth Artin stack over $\Spec R$.\end{prop}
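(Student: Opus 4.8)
The plan is to analyze the weighted projective stack $X = \PP_R(a_0,\dots,a_n) = [(\A_R^{n+1}-\{0\})/\G_m]$ by exhibiting an explicit smooth atlas and then checking properness via the valuative criterion after reducing to a well-understood situation. First I would observe that the quotient presentation immediately gives that $X$ is an Artin stack: the action groupoid $\G_m \times (\A_R^{n+1}-\{0\}) \rightrightarrows (\A_R^{n+1}-\{0\})$ has smooth (indeed flat, of finite type) source and target maps, so the associated stack quotient is algebraic, and the projection $\A_R^{n+1}-\{0\} \to X$ is a smooth surjective atlas. Smoothness of $X$ over $\Spec R$ then follows because $\A_R^{n+1}-\{0\}$ is smooth over $\Spec R$ (it is an open subscheme of affine space) and smoothness can be checked on a smooth atlas.

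The substantive point is properness, and here I would follow the standard strategy of comparing $X$ with its associated coarse space, or more directly with ordinary projective space. The key geometric input is that $X$ admits a finite, hence proper, surjection to (or from) a genuine projective scheme. Concretely, setting $d = \lcm(a_0,\dots,a_n)$ and $b_i = d/a_i$, the morphism $\A_R^{n+1}-\{0\} \to \A_R^{n+1}-\{0\}$ given on coordinates by $t_i \mapsto t_i^{b_i}$ is $\G_m$-equivariant for the weighted action upstairs and the weight-$d$ (i.e.\ trivially rescaled, genuinely linear) action downstairs, and it is finite and surjective. Passing to quotients, this produces a finite surjective morphism relating $X$ to $\PP^n_R$-type stacks, which lets me transport properness. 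Alternatively, I would verify the valuative criterion for properness directly: given a DVR with fraction field $K$ and a map $\Spec K \to X$, I lift it along the atlas to a tuple $(t_0,\dots,t_n)$ of elements of $K$, not all zero, and use the $\G_m$-action to scale so that the tuple extends to the DVR with not all coordinates in the maximal ideal, giving the required (essentially unique up to the automorphisms recorded by $\G_m$) extension to the valuation ring; this existence-and-uniqueness up to unique isomorphism is exactly properness (and separatedness) for the stack.

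The main obstacle I anticipate is handling the stacky structure carefully in the valuative criterion: because $X$ is a stack rather than a scheme, properness must be formulated as existence and \emph{uniqueness up to unique $2$-isomorphism} of lifts, so I must check not merely that a tuple extends over the DVR but that the two possible extensions differ by a unique element of $\G_m$ acting over the DVR. The rescaling step requires choosing a uniformizer and clearing valuations using the weights $a_i$, and one must confirm that after scaling the limiting point genuinely lands in $\A_R^{n+1}-\{0\}$ (not all coordinates vanishing) and that the residual ambiguity is precisely the finite stabilizer absorbed into the groupoid. Separatedness, which is part of properness, reduces similarly to checking that the inertia and the diagonal are proper; this follows because the stabilizers are closed subgroups of $\G_m$ cut out by the monomial relations, hence finite over the base. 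Once existence, uniqueness, separatedness, and finite-typeness are assembled, the valuative criterion yields properness of $X$ over $\Spec R$, completing the proof.
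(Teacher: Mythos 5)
Your handling of algebraicity and smoothness matches the paper's (quotient presentation, smooth atlas) and is fine. The properness part, however, contains two genuine gaps. First, your ``alternative'' direct valuative-criterion argument is false as stated: it is in general \emph{impossible} to rescale a $K$-point so that it extends over the valuation ring. Already for $\PP(2,2)$ and the $K$-point $(\pi,\pi)$, $\pi$ a uniformizer, every rescaling $(\lambda^2\pi,\lambda^2\pi)$ has coordinates of odd valuation, so no choice of $\lambda\in K^\times$ produces a tuple in $A^{n+1}$ with a unit coordinate. The correct valuative criterion for stacks (\cite[Th\'eor\`eme 7.10]{LMB00}, which the paper invokes) only demands existence of an extension after a \emph{finite} extension $L/K$, and this is exactly the step the paper performs: the valuations of the coordinates can be made divisible by the weights after a ramified finite base change. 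Your proposal never allows for this extension, so the existence half of your criterion simply fails. Second, your separatedness argument --- stabilizers are cut out by monomial relations, hence finite, hence the diagonal is proper --- is invalid: finiteness (even triviality) of the inertia does not imply properness of the diagonal. Concretely, $\G_m$ acting on $\A^2_R-\{0\}$ with weights $(1,-1)$ acts with trivial stabilizers, yet the quotient is the affine line with doubled origin, which is not separated; your argument would ``prove'' its separatedness verbatim. The point is that positivity of the weights must enter somewhere, and it never does in your proposal. In the paper it enters precisely in the separatedness step: an isomorphism over $K$ between two $A$-points is a scalar $\lambda'\in K^\times$ with $\lambda'^{a_i}x_i=y_i$; when $x_i\in A^\times$ this gives $a_i v(\lambda')\geq 0$, and since $a_i>0$ one concludes $\lambda'\in A$ (\cite[Proposition 7.8]{LMB00}).

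Your first route, via a finite comparison map, is closer in spirit to the paper's Proposition \ref{FinitePn} (which constructs a finite fpqc map $\PP^n_R\to\PP_R(a_0,\dots,a_n)$, in the direction opposite to yours) and could be salvaged, but as written it is incomplete: your target $\PP_R(d,\dots,d)$ is \emph{not} $\PP^n_R$ but a $\mu_d$-gerbe over it, so ``transporting properness'' from it presupposes the properness of another weighted projective stack --- a statement of the same kind as the one being proved. One can repair this (the gerbe $\PP_R(d,\dots,d)\to\PP^n_R$ is proper because fppf-locally it is $U\times B\mu_d\to U$ with $\mu_d$ finite, and then one composes), or one can use the paper's map to get universal closedness of $X\to\Spec R$ as the image of the proper $\PP^n_R$; but in either variant the separatedness of a stack quotient must still be established by a genuine argument of the kind sketched above, which is the step your proposal gets wrong.
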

\begin{proof}Since $X \simeq \PP_\Z(a_0,\dots, a_n) \times_{\Spec \Z} \Spec R$ and properness is preserved under base change, we can assume $R = \Z$. Every stack quotient of a smooth scheme by a smooth group scheme is a smooth Artin stack, so $X$ is a smooth Artin stack.

Next, we have to show that $X$ is separated. For a preliminary remark assume that $A$ is a local ring. Then the groupoid $X(A)$ is equivalent to the groupoid with objects $(n+1)$-tuples $(x_0,\dots, x_n) \in A^{n+1}$ where at least one $x_i$ is invertible and a morphism between $(x_0,\dots, x_n)$ and $(y_0,\dots, y_n)$ is an element $\lambda \in A^\times$ such that $(\lambda^{a_0}x_0, \dots, \lambda^{a_n}x_n) = (y_0,\dots, y_n)$. Indeed, while $X$ is the stack of $\G_m$-torsors with $\G_m$-equivariant map to $\A^{n+1}-\{0\}$, we just described the value of the prestack of \textit{trivial} $\G_m$-torsors with $\G_m$-equivariant map to $\A^{n+1}-\{0\}$ (see \cite{Beh12}, Definition 3.11 and Example 3.12). These agree on every scheme whose Picard group is trivial. 

For $\pi\co X \to \Spec \Z$ to be separated it suffices to show that for every valuation ring $A$ with quotient field $K$ and every two objects $x,y \in X(R)$ with an isomorphism $\overline{\lambda}\co x_K \to y_K$ in $X(K)$, there is exactly one isomorphism $\lambda\co x \to y$ in $X(A)$ such that $\lambda_K = \overline{\lambda}$ (\cite[Proposition 7.8]{LMB00}). As a valuation ring is local, $x$ corresponds to $(x_0,\dots, x_n)\in A^{n+1}$, $y$ to $(y_0,\dots, y_n) \in A^{n+1}$ and $\overline{\lambda}$ to an element $\lambda'\in K^\times$ with $\lambda'^{a_i}x_i = y_i$. If $x_i$ is invertible, $\lambda'^{a_i} \in A$ and thus also $\lambda' \in A$.  

As $\pi$ is thus separated and also of finite type, for $\pi$ to be proper it is now enough to show the following: For every discrete valuation ring $A$ with quotient field $K$ and object $x\in X(K)$, there is a finite extension $L$ of $K$ such that we have an object $y\in X(B)$ (for $B$ the integral closure of $A$ in $L$) and an equivalence $y_L \to x_L$ in $X(L)$ (see \cite[Theoreme 7.10]{LMB00}). The object $x$ corresponds again to an $(n+1)$-tuple $(x_0,\dots, x_n) \in K^{n+1}$. The object $x$ is isomorphic to $y_K$ for some $y \in X(A)$ if and only if the valuation of $x_i$ is divisible by $a_i$ for $i= 0,\dots, n+1$. This can be assumed after ramified (finite) base change. 
\end{proof}

\begin{remark}\label{genericautomorphism}
Set again $X = \PP_R(a_0,\dots, a_n)$ and $x \in X(k)$ be a field-valued point corresponding to an $(n+1)$-tuple $(x_0,\dots, x_n) \in k^{n+1}$. We claim that the automorphism group scheme $\underline{Aut}(x)$ is isomorphic to $\mu_d = \Spec k[u]/(u^d-1)$, where $d$ is the greatest common divisor of all $a_i$ such that $x_i\neq 0$.

Indeed, let $R$ be a $k$-algebra and denote by $x_R\in X(R)$ the pullback. As in the last proof, the automorphism group of $x_R$ consists of all $\lambda \in R$ with 
\[(\lambda^{a_0}x_0, \dots, \lambda^{a_n}x_n) = (x_0,\dots, x_n).\]
As the $x_i$ are either invertible or zero, the condition is exactly $\lambda^{a_i} = 1$ if $x_i \neq 0$. Thus, we have a natural bijection $\underline{Aut}(x)(R) \cong \mu_d(R)$. 

 A stack quotient of a smooth scheme by a smooth group scheme is a smooth Deligne--Mumford stack if the stabilizers are finite and reduced (see \cite{Edi00}). We see that $\PP_R(a_0,\dots, a_n)$ is actually a \textit{Deligne--Mumford stack} if all $a_i$ are invertible on $R$. 
\end{remark}

\begin{prop}\label{FinitePn}For every $a_0,\dots, a_n \in \N$ and every commutative ring $R$, there is a finite fpqc map $\psi\co \mathbb{P}_R^n = \PP_R(1,\dots, 1) \to \PP_R(a_0,\dots, a_n)$. \end{prop}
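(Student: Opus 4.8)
The plan is to realize $\psi$ as the map of quotient stacks induced by an explicit $\G_m$-equivariant morphism, and then to reduce the assertions about finiteness and flatness to a computation on affine space via fpqc descent. I write both stacks as quotients of $\A_R^{n+1}-\{0\}$ by $\G_m$, the source $\mathbb{P}_R^n = \PP_R(1,\dots,1)$ with the standard (weight-one) action and the target $\PP_R(a_0,\dots,a_n)$ with the weighted action $\phi$ (throughout I take the weights to satisfy $a_i\ge 1$, as is implicit in the notion). Consider
\[ f\co \A_R^{n+1}-\{0\} \to \A_R^{n+1}-\{0\}, \qquad (s_0,\dots,s_n) \mapsto (s_0^{a_0},\dots,s_n^{a_n}), \]
which is well-defined since $D(s_i)$ maps into $D(t_i)$. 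A one-line computation shows that $f$ is equivariant for the identity homomorphism $\G_m\to\G_m$: for the standard action on the source and $\phi$ on the target,
\[ f(\lambda\cdot s) = \big((\lambda s_i)^{a_i}\big)_i = \big(\lambda^{a_i}s_i^{a_i}\big)_i = \lambda\cdot_\phi f(s). \]
Hence $f$ descends to the desired morphism $\psi\co \mathbb{P}_R^n \to \PP_R(a_0,\dots,a_n)$.

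Next I would exploit that an equivariant map over the identity of the structure group produces a $2$-Cartesian square
\[ \begin{array}{ccc} \A_R^{n+1}-\{0\} & \xrightarrow{\ f\ } & \A_R^{n+1}-\{0\} \\ \downarrow & & \downarrow \\ \mathbb{P}_R^n & \xrightarrow{\ \psi\ } & \PP_R(a_0,\dots,a_n), \end{array} \]
in which the vertical maps are the defining $\G_m$-torsors (atlases). Concretely, trivializing the universal torsor identifies the pullback of $\psi$ along the atlas of the target with $f$. Since finiteness, faithful flatness and quasi-compactness of a morphism may be checked after fpqc base change, and the right-hand vertical map is an fpqc (indeed fppf) cover, it suffices to prove that $f$ itself is finite and faithfully flat.

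Finally I would obtain this by recognizing $f$ as the restriction of $F\co \A_R^{n+1}\to\A_R^{n+1}$, $(s_i)\mapsto(s_i^{a_i})$, which on coordinate rings is the map $R[t_0,\dots,t_n]\to R[s_0,\dots,s_n]$, $t_i\mapsto s_i^{a_i}$. This exhibits $R[s_0,\dots,s_n]$ as a free $R[t_0,\dots,t_n]$-module with basis the monomials $s_0^{e_0}\cdots s_n^{e_n}$ with $0\le e_i<a_i$, so $F$ is finite and free of rank $\prod_i a_i$, hence finite and faithfully flat. Because $F^{-1}(\A_R^{n+1}-\{0\}) = \A_R^{n+1}-\{0\}$ (as $D(s_i^{a_i})=D(s_i)$), the morphism $f$ is the base change of $F$ along the open immersion of the target, so it inherits finiteness and faithful flatness; finite maps are moreover automatically quasi-compact. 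Descent then gives that $\psi$ is finite and fpqc.

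I expect the only genuinely delicate point to be the identification of the above square as $2$-Cartesian, that is, the stacky bookkeeping that reduces the statement to the concrete claim about $f$; everything after that is the elementary freeness computation together with the fpqc descent of the relevant properties of morphisms. A minor caveat worth recording is the convention $a_i\ge 1$, which is what makes $f$ land in the punctured affine space and what guarantees the monomial basis above.
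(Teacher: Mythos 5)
Your proof is correct and takes essentially the same route as the paper: both rest on the coordinate map $t_i \mapsto x_i^{a_i}$, the resulting finite free extension $R[t_0,\dots,t_n] \subset R[x_0,\dots,x_n]$ (the paper asserts freeness, you exhibit the monomial basis), the 2-Cartesian square induced by a $\G_m$-equivariant map over the identity of $\G_m$, and fpqc descent of finiteness, flatness, surjectivity and quasi-compactness along the atlas. The only cosmetic difference is order of operations: the paper descends these properties for the map of full quotient stacks $\A_R^{n+1}//\G_m \to \A_R^{n+1}//\G_m$ and restricts to the punctured loci at the very end, whereas you restrict to $\A_R^{n+1}-\{0\}$ first and then pull $\psi$ back along the atlas of the target.
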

\begin{proof}By definition, we have $\mathbb{P}_R^n = (\Spec R[x_0,\dots, x_n] -\{0\})//\G_m$ and $\PP_R(a_0,\dots, a_n) = (\Spec R[t_0,\dots, t_n] -\{0\})//\G_m$ with gradings $|x_i| = 1$ and $|t_i| = a_i$ for all $i=0,\dots, n$. We define a map $f\co R[t_0,\dots, t_n] \to R[x_0,\dots, x_n]$ of graded rings by $t_i \mapsto x_i^{a_i}$. This map makes $R[x_0,\dots, x_n]$ into a free module of finite rank over $R[t_0,\dots, t_n]$. The map $f$ induces a $\G_m$-equivariant map $g\co \Spec R[x_0,\dots, x_n] \to\Spec R[t_0,\dots, t_n]$, which is finite, flat, quasi-compact and surjective. As all these properties descent via fpqc-maps and 
\[\xymatrix{\Spec R[x_0,\dots, x_n] \ar[r]\ar[d] & \Spec R[t_0,\dots, x_n]\ar[d] \\
\Spec R[x_0,\dots, x_n]//\G_m \ar[r] & \Spec R[t_0,\dots, t_n] //\G_m } \]
is a pullback square, the map $g//\G_m\co \Spec R[x_0,\dots, x_n]//\G_m \to \Spec R[t_0,\dots, t_n] //\G_m$ has all these properties as well. As the complement of the vanishing locus of $x_0,\dots, x_n$ in the same as that of $x_0^{a_0},\dots, x_n^{a_n}$, this restricts to a finite fpqc map
\[\psi\co \mathbb{P}_R^n \to \PP_R(a_0,\dots, a_n).\]
\end{proof}

For every integer $m$, there is a line bundle $\OO(m)$ on $\PP_R(a_0,\dots, a_n)$ defined as follows: The map $\phi$ gives a $\G_m$-action on $\A^n_R$. The category of quasi-coherent modules on $\A^n_R//\G_m$ is equivalent to graded $R[t_0,\dots, t_n]$-modules by Galois descent, where $|t_i| = a_i$. For $M$ a graded module, denote by $M[m]$ the graded module with $M[m]_k = M_{m+k}$. Then $R[t_0,\dots, t_n][m]$ is a graded $R[t_0,\dots, t_n]$-module, which corresponds to a line bundle on $\A^n_R//\G_m$ whose restriction to $\PP(a_0,\dots, a_n)$ we denote by $\OO(m)$. 

\begin{example}\label{mmb}Denote by $\MMb_R$ the compactified moduli stack of elliptic curves over a ring $R$ (in the sense of $\mathfrak{M}_1$ in \cite[IV.2.4]{D-R73}). We have an equivalence \[\MMb_R \simeq (\Spec R[c_4,c_6]-\{0\}) // \G_m \cong \PP(4,6)\] for $R$ a ring where $6$ is invertible. This equivalence is given by the Weierstraß form $E\co y^2 = x^3 +c_4x + c_6$. Under this equivalence, the line bundle $\omega$ on $\MMb_R$ corresponds to $\OO(1)$ on $\PP_R(4,6)$. The reason is that $f(u)^*\omega_0 = u \omega_0$, where $\omega_0 = \frac{dx}{2y}$ is the usual invariant differential, which is a local trivialization of $\omega$, and $f(u)$ is the endomorphism of the elliptic curve $E$ given by $f(u)(x) = u^{-2}x$ and $f(u)(y) =u^{-3}y$. 
\end{example}

Our next aim is to calculate the cohomology of the line bundle $\OO(m)$ on a weighted projective stack.

\begin{prop}\label{CohomologyProjective}
Let $X = \PP_R(a_0,\dots, a_n)$ be a weighted projective stack. Define sets 
\begin{align*}
A(m) &= \{(\lambda_0,\dots, \lambda_n) \in \Z_{\geq 0}^{n+1}:\; \sum_{i=0}^n \lambda_i a_i = m\},\text{ and} \\
B(m) &= \{(\lambda_0,\dots, \lambda_n) \in \Z_{<0}^{n+1}:\; \sum_{i=0}^n \lambda_i a_i = m\}.
\end{align*}
Then 
\begin{align*}
H^0(X;\OO(m)) &= \text{free }R\text{-module on }A(m) \\
H^i(X; \OO(m)) &= 0\; \text{ for } 1\leq i \leq n-1 \\
H^n(X;\OO(m)) &= \text{free }R\text{-module on }B(m)
\end{align*}
Note, in particular, that $H^0(X;\OO(m)) = 0$ for $m<0$ and 
\[H^0(X;\OO(m)) \cong H^n\left(X; \OO(-\sum_{i=0}^n a_i - m)\right).\] 
\end{prop}
\begin{proof}
We compute the cohomology as \v{C}ech cohomology with respect to the cover by the open substacks 
\[U_i = \Spec R[t_0,\dots, t_n][t_i^{-1}]//\G_m.\]
 Note that $H^j(U_i;\FF)= 0$ for $j>0$ and every quasi-coherent sheaf $\FF$ by Lemma \ref{gradetriv}. 

The \v{C}ech complex associated to this cover and the sheaf $\bigoplus_{l\in\Z}\OO(l)$ agrees with the one for the standard covering of the projective space $\mathbb{P}^n_R$, if we view $\bigoplus_{l\in\Z}\OO(l)$ as an ungraded sheaf. As in this classical case (see \cite[III.5.1]{Har77}), we obtain
\begin{align*}
H^0(X;\bigoplus_{l\in\Z}\OO(l)) &= R[t_0, \dots, t_n] \\
H^i(X; \bigoplus_{l\in\Z}\OO(l)) &= 0\; \text{ for } 1\leq i \leq n-1 \\
H^n(X;\bigoplus_{l\in\Z}\OO(l)) &= \begin{minipage}{12cm}free $R$-module on the set of 
monomials $t_0^{i_0}\cdots t_n^{i_n}$ with all $i_j<0$ \end{minipage}
\end{align*} 
The group $H^i(X;\OO(m))$ is isomorphic to the part of degree $m$, where $|t_i| = a_i$ again. The proposition follows easily now.
\end{proof}

As an example, we depict some range of cohomology in the case of $\PP_R(4,6)$, where squares stand for copies of $R$:

\[\begin{sseq}[ylabelstep=1]{-22...12}{2}
 \ssdrop{\Box}\ssmove{4}{0}
\ssdrop{\Box}\ssmove{2}{0}
\ssdrop{\Box}\ssmove{2}{0}
\ssdrop{\Box}\ssmove{2}{0}
\ssdrop{\Box}\ssmove{2}{0}
\ssdrop{\Box \Box}
\ssmoveto{-22}{1}
\ssdrop{\Box\Box}\ssmove{2}{0}
\ssdrop{\Box}\ssmove{2}{0}
\ssdrop{\Box}\ssmove{2}{0}
\ssdrop{\Box}\ssmove{2}{0}
\ssdrop{\Box}\ssmove{4}{0}
\ssdrop{\Box}
\end{sseq}\]

\section{Vector Bundles Away From $2$ and $3$}\label{VBA23}
\subsection{The Case of a Field}
I have learned most of the proofs in this subsection from Angelo Vistoli. 

Recall that a coherent sheaf $\FF$ is called \textit{reflexive} if the canonical map from $\FF$ to its double-dual is an isomorphism. In particular, every vector bundle is reflexive. 

\begin{lemma}\label{PBReflx}Pullbacks along flat maps preserve reflexive sheaves. \end{lemma}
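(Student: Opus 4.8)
The plan is to reduce reflexivity to a statement about the sheaf-$\mathcal{H}om$ functor and to exploit that flat pullback is exact and commutes with dualization. Write $\FF^\vee := \mathcal{H}om(\FF,\OO)$ and recall that $\FF$ is reflexive precisely when the canonical evaluation map $\eta_\FF\co \FF \to \FF^{\vee\vee}$ is an isomorphism. Since being an isomorphism of quasi-coherent sheaves, flatness, and the formation of $\mathcal{H}om$ can all be tested after passing to a cover, I would first check the statement on a smooth atlas, reducing to schemes, and then work locally so as to land in the affine situation of a flat homomorphism of rings $A \to B$ and a finitely presented $A$-module $M$. A reflexive coherent sheaf is in particular finitely presented, so this reduction is harmless.

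The key step is the base-change isomorphism
\[
f^*\mathcal{H}om(\FF,\GG) \xrightarrow{\ \sim\ } \mathcal{H}om(f^*\FF, f^*\GG),
\]
valid whenever $\FF$ is finitely presented and $f$ is flat. To prove it I would choose, locally, a finite presentation $\OO^{\oplus m} \to \OO^{\oplus n} \to \FF \to 0$, apply the left-exact functor $\mathcal{H}om(-,\GG)$ to obtain a left-exact sequence $0 \to \mathcal{H}om(\FF,\GG) \to \GG^{\oplus n} \to \GG^{\oplus m}$, and then apply $f^*$. Because $f$ is flat, $f^*$ is exact and commutes with finite direct sums, so the pulled-back sequence is again left-exact and identifies $f^*\mathcal{H}om(\FF,\GG)$ with the kernel computing $\mathcal{H}om(f^*\FF, f^*\GG)$ out of the pulled-back presentation $\OO^{\oplus m} \to \OO^{\oplus n} \to f^*\FF \to 0$ of $f^*\FF$. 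One must verify that the comparison map so produced is the obvious natural one, and is therefore independent of the chosen presentation; this is the only point requiring genuine care.

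Taking $\GG = \OO$ gives a natural isomorphism $f^*(\FF^\vee) \cong (f^*\FF)^\vee$, and applying this twice yields a natural isomorphism $f^*(\FF^{\vee\vee}) \cong (f^*\FF)^{\vee\vee}$. By naturality of the evaluation map, the square relating $f^*(\eta_\FF)$ to $\eta_{f^*\FF}$ commutes, so under these identifications $f^*(\eta_\FF)$ is carried precisely to $\eta_{f^*\FF}$.

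Finally, since $\FF$ is reflexive, $\eta_\FF$ is an isomorphism; as $f^*$ is exact it preserves isomorphisms, so $f^*(\eta_\FF)$, and hence $\eta_{f^*\FF}$, is an isomorphism. Thus $f^*\FF$ is reflexive. The main obstacle is entirely contained in the second paragraph: establishing the $\mathcal{H}om$ base-change isomorphism for finitely presented sheaves along flat maps and checking its compatibility with the biduality map. Everything else is formal bookkeeping with the exactness of $f^*$.
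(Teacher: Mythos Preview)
Your argument is correct and matches the paper's approach: both reduce to the local affine situation and use that flat pullback commutes with dualization, then conclude by naturality of the biduality map. The only difference is that the paper outsources the key base-change step to \cite[IV, Proposition 18]{Ser00} rather than spelling out the finite-presentation argument; note also that to apply the base-change isomorphism a second time you need $\FF^\vee$ to be finitely presented, which holds automatically in the noetherian setting of the paper.
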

\begin{proof}It follows from \cite[IV, Proposition 18]{Ser00} that pullbacks along flat maps between affine schemes preserve duals. Since flatness is a local condition and duals are computed locally, thus pullback along any flat map between algebraic stacks preserves duals.\end{proof}

\begin{lemma}\label{Reflextension}Let $\XX$ be an Artin stack and $i\co U\to \XX$ a quasi-compact open immersion. For every reflexive sheaf $\xi$ on $U$, there is a reflexive sheaf $\FF$ on $\XX$ with $i^*\GG \cong \xi$. \end{lemma}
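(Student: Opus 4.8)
The plan is to decouple the two demands on $\FF$ — that it extend $\xi$ and that it be reflexive — and to satisfy the second one automatically by double-dualizing. (I read the conclusion as $i^*\FF \cong \xi$, the $\GG$ in the statement being a typo.) Throughout write $(-)^\vee = \mathcal{H}om_{\OO_\XX}(-,\OO_\XX)$ for the dual.

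First I would ignore reflexivity and merely extend the underlying coherent sheaf. Since $i\co U\to \XX$ is a quasi-compact open immersion, the standard extension result for coherent sheaves along such immersions produces a coherent sheaf $\GG$ on $\XX$ with $i^*\GG \cong \xi$. I would then set $\FF := \GG^{\vee\vee}$. That this still restricts to $\xi$ is immediate from flatness: an open immersion is flat, so Lemma \ref{PBReflx} (more precisely its proof, that flat pullback commutes with dualizing) gives $i^*\FF = i^*(\GG^{\vee\vee}) \cong (i^*\GG)^{\vee\vee} \cong \xi^{\vee\vee} \cong \xi$, the final isomorphism holding because $\xi$ is reflexive by hypothesis.

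It remains to show that $\FF = \GG^{\vee\vee}$ is reflexive, and this is the only real point. I would argue that reflexivity is smooth-local. Choosing a smooth surjection $p\co X\to \XX$ from a scheme, Lemma \ref{PBReflx} identifies $p^*(\HH^{\vee\vee})$ with $(p^*\HH)^{\vee\vee}$ compatibly with the evaluation maps, so the canonical map $\HH\to\HH^{\vee\vee}$ pulls back to the analogous map for $p^*\HH$; since $p$ is faithfully flat, an isomorphism after pullback is an isomorphism, so $\HH$ is reflexive whenever $p^*\HH$ is. Applied to $\HH=\FF$, this reduces the claim to the assertion that the double dual $(p^*\GG)^{\vee\vee}$ of a coherent sheaf on the scheme $X$ is reflexive, which is a classical fact on the normal Noetherian schemes arising as atlases of the stacks considered here.

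The main obstacle is precisely this last step. The purely categorical input only exhibits $\GG^\vee$ as a retract of $\GG^{\vee\vee\vee}$, so genuine reflexivity of the double dual requires geometric hypotheses (normality, or at least $S_2$, over a Noetherian base), and one must verify that these — together with the coherent-extension result — are available at the generality in which the lemma is stated. For every stack appearing in this paper (weighted projective stacks and $\MM_R$ over a field or discrete valuation ring, which are smooth over a regular base by Proposition \ref{weightedproper}) the atlas may be taken normal and Noetherian, so the descent argument above applies and the construction goes through.
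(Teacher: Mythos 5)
Your proposal is correct and takes essentially the same route as the paper's proof: extend $\xi$ to a coherent sheaf $\GG$ on $\XX$ (using quasi-compactness of $i$ and the coherent-extension result of \cite{LMB00}), set $\FF = \GG^{\vee\vee}$, obtain reflexivity of $\FF$ from Hartshorne's result applied locally on an atlas, and recover $i^*\FF \cong \xi$ from flatness of $i$ together with reflexivity of $\xi$. Your closing caveat about the geometric hypotheses needed for the double dual to be reflexive is the same implicit assumption the paper makes when it cites \cite{Har80}, 1.2 ``since both reflexivity and coherence are local conditions''; in fact integrality of a noetherian atlas already suffices there, so normality/$S_2$ is slightly more than is required.
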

\begin{proof}Note first that $i_*\xi$ is quasi-coherent by \cite[Proposition 13.2.6]{LMB00} since $i$ is quasi-compact. By \cite[Corollaire 15.5]{LMB00} there is then a coherent sheaf $\GG$ on $\MMb_R$ with $i^*\GG = \xi$. Let $\FF$ denote its double-dual. This is reflexive (\cite{Har80}, 1.2 - which we can use also for stacks since both reflexivity and coherence are local conditions) and, in addition, we have $i^*\FF = \xi$ since $\xi$ is already reflexive and $i$ is flat. \end{proof}
By the lemma, we get directly the following: 
\begin{prop}For $R$ a ring, every reflexive sheaf on $\MM_R$ is the restriction of a reflexive sheaf on $\MMb_R$. 
\end{prop}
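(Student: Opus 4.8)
The plan is to obtain the proposition as an immediate application of Lemma \ref{Reflextension} to the open immersion $i\co \MM_R \to \MMb_R$. Recall that $\MM_R$ is the open substack of $\MMb_R$ complementary to the cuspidal divisor: presenting $\MMb_R$ through (generalized) Weierstra\ss\ equations, or, when $6$ is invertible, as $\PP_R(4,6)$ via Example \ref{mmb}, the substack $\MM_R$ is the locus where the discriminant $\Delta$ is invertible. So all I really need is to check that the hypotheses of Lemma \ref{Reflextension} are met for $\XX = \MMb_R$ and $U = \MM_R$.

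There are two such hypotheses. First, $\MMb_R$ must be an Artin stack; this is classical by \cite{D-R73}, and away from $2$ and $3$ it is also covered by Proposition \ref{weightedproper} together with the identification $\MMb_R \simeq \PP_R(4,6)$ of Example \ref{mmb}. Second, the open immersion $i$ must be quasi-compact. I would verify this by working in the Weierstra\ss\ presentation: in each affine chart of $\MMb_R$, the substack $\MM_R$ is the basic open subset obtained by inverting the single function $\Delta$, hence quasi-compact (in fact affine) over that chart, so $i$ is quasi-compact.

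With both hypotheses in hand the proposition follows directly. Given a reflexive sheaf $\xi$ on $\MM_R$, Lemma \ref{Reflextension} applied with $\XX = \MMb_R$, $U = \MM_R$ and the quasi-compact open immersion $i$ produces a reflexive sheaf $\FF$ on $\MMb_R$ together with an isomorphism $i^*\FF \cong \xi$. This exhibits $\xi$ as the restriction of a reflexive sheaf on $\MMb_R$, which is exactly the claim.

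The only point deserving care — and the sole place where anything could go wrong — is the quasi-compactness of $i$, since it is precisely this hypothesis that lets Lemma \ref{Reflextension} both push $\xi$ forward to a quasi-coherent sheaf and approximate it by a coherent one. Because inverting a single function always yields a quasi-compact open, there is in fact no obstacle here, and the remainder is a purely formal invocation of the earlier lemma.
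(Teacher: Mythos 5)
Your proof is correct and is exactly the paper's argument: the paper derives this proposition immediately from Lemma \ref{Reflextension} applied to the quasi-compact open immersion $\MM_R \hookrightarrow \MMb_R$. Your additional verification of the hypotheses (that $\MMb_R$ is an Artin stack and that $\MM_R$, being the non-vanishing locus of the single section $\Delta$, is quasi-compactly embedded) is a sound elaboration of what the paper leaves implicit.
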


By Example \ref{mmb}, we have an equivalence $\MMb_K \simeq (\Spec K[c_4,c_6]-\{0\}) // \G_m = \PP_K(4,6)$ for $K$ a a field of characteristic not $2$ or $3$. Therefore, it is enough to classify reflexive sheaves on weighted projective lines. 

\begin{prop}\label{ClassField}Let $K$ be an arbitrary field, $m,n\in\N$. Then every reflexive sheaf $\FF$ on $\PP_K(m,n)$ is a direct sum of line bundles of the form $\OO(a)$.\end{prop}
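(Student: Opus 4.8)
The plan is to reduce to Grothendieck's splitting principle, adapted to the weighted setting. First I would observe that reflexive $=$ vector bundle here: the stack $\PP_K(m,n)$ is a regular Artin stack of dimension $1$ (its cohomology, computed at the end of Section \ref{ProjectiveLines}, lives only in degrees $0$ and $1$), and its smooth atlas $\A^2_K-\{0\}$ is a $2$-dimensional regular scheme. Over a $2$-dimensional regular local ring every finitely generated reflexive module is free (it is $S_2$, hence maximal Cohen--Macaulay, hence free by Auslander--Buchsbaum), so a reflexive sheaf pulls back — reflexivity being preserved by the flat atlas map, Lemma \ref{PBReflx} — to a locally free sheaf on $\A^2_K-\{0\}$; as local freeness descends along the smooth cover, $\FF$ is a vector bundle. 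I also record that $\Pic(\PP_K(m,n))\cong\Z$, generated by $\OO(1)$: since $\Pic(\A^2_K-\{0\})=0$, line bundles are exactly the $\G_m$-equivariant ones, i.e.\ the characters of $\G_m$, which are the shifts $\OO(a)$.

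Next I would set up an induction on the rank $r$ of the vector bundle $\FF$. Let $a$ be the largest integer with $\Hom(\OO(a),\FF)=H^0(\FF(-a))\neq 0$; such a maximum exists because $\FF(c)$ is globally generated for $c\gg 0$ (relative ampleness of $\OO(1)$), while sub-line-bundles $\OO(a)\hookrightarrow\FF$ have bounded twist — most cleanly seen by pulling back along the finite flat map $\psi\co\mathbb{P}^1_K\to\PP_K(m,n)$ of Proposition \ref{FinitePn}, under which $\OO(a)$ becomes $\OO_{\mathbb{P}^1_K}(a)$, whose degree is bounded by the maximal slope of the fixed bundle $\psi^*\FF$. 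A nonzero $\phi\co\OO(a)\to\FF$ is automatically injective (a line bundle is torsion-free). Its saturation $L\subseteq\FF$ is a reflexive, hence locally free, rank-$1$ subsheaf, so $L\cong\OO(b)$ with a torsion quotient $L/\im\phi$; the inclusion $\OO(a)\hookrightarrow\OO(b)$ forces $b\geq a$ (as $H^0(\OO(b-a))\neq 0$ needs $b-a\geq 0$), while maximality of $a$ applied to $L\hookrightarrow\FF$ forces $b\leq a$. Hence $b=a$, the map $\phi$ is a subbundle inclusion, and $\QQ:=\FF/\OO(a)$ is a vector bundle of rank $r-1$. By induction $\QQ\cong\bigoplus_i\OO(b_i)$. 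Splitting of $0\to\OO(a)\to\FF\to\QQ\to 0$ then reduces to the vanishing of $\Ext^1(\QQ,\OO(a))=\bigoplus_i H^1(\OO(a-b_i))$, which holds as soon as every $b_i\leq a$, since then $a-b_i\geq 0>-(m+n)$ and $H^1(\OO(j))=0$ for $j>-(m+n)$.

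The heart of the argument — and the step I expect to be the main obstacle — is proving $b_i\leq a$, because the numerical semigroup generated by $m$ and $n$ has gaps (all the more so when $\gcd(m,n)>1$), so vanishing of a single $H^0$ does not pin down the sign of the twist. To get around this I would exploit maximality across a whole window of twists. Twisting the sequence by $\OO(t)$ and using that $H^0(\OO(a+t))=0=H^1(\OO(a+t))$ whenever $-(m+n)<a+t<0$ (both $A(a+t)$ and $B(a+t)$ are empty in this range), the long exact sequence gives $H^0(\FF(t))\cong H^0(\QQ(t))$ for all $t\in\{-a-(m+n)+1,\dots,-a-1\}$; each such $t$ satisfies $t\leq -a-1$, so $H^0(\FF(t))=0$ by maximality, whence $H^0(\QQ(t))=\bigoplus_i H^0(\OO(b_i+t))=0$. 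Thus $A(j)=\varnothing$ for all $m+n-1$ consecutive integers $j\in\{b_i-a-(m+n)+1,\dots,b_i-a-1\}$. If $b_i>a$, the top value $b_i-a-1$ is $\geq 0$, and among the top $m$ of these consecutive integers (a sub-run of length $m\leq m+n-1$) there lies the largest multiple of $m$ not exceeding $b_i-a-1$; this multiple is non-negative, hence representable, contradicting $A(j)=\varnothing$. Therefore $b_i\leq a$, the extension splits by the vanishing above, and $\FF\cong\OO(a)\oplus\bigoplus_i\OO(b_i)$, completing the induction. The whole argument is characteristic-free, matching the generality of the statement.
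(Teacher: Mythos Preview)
Your proof is correct and follows a genuinely different route from the paper. The paper pulls $\FF$ back to a $\G_m$-equivariant reflexive sheaf on $\A^2_K-\{0\}$, extends it reflexively across the origin to all of $\A^2_K$ via Lemma~\ref{Reflextension}, and then invokes a result from graded commutative algebra (\cite{Lam06}, II.4.6): over a non-negatively graded ring $R$ with $R_0$ a field, every graded projective module is a direct sum of shifts of $R$. You instead stay on $\PP_K(m,n)$ and run a Grothendieck-style induction, peeling off a maximal sub-line-bundle $\OO(a)$ and bounding the twists in the quotient by a direct numerical-semigroup argument built on the cohomology computation of Section~\ref{ProjectiveLines}. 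The paper's approach is shorter and offloads all the combinatorics onto a clean algebraic black box; yours is more self-contained (it uses only what the paper has already set up) and makes the Harder--Narasimhan structure explicit. One step you should spell out: when you pass from ``$b=a$'' to ``$\QQ=\FF/\OO(a)$ is a vector bundle'', you are using that a torsion-free coherent sheaf on $\PP_K(m,n)$ is locally free. This holds because the stack is one-dimensional---on the chart $U_1\simeq\A^1_K//\mu_m$ it reduces to finitely generated torsion-free modules over the PID $K[x_l]$ being free---but it is not immediate from the two-dimensional atlas $\A^2_K-\{0\}$, where torsion-free does not imply locally free; the point is that the non-locally-free locus of the pullback is closed of codimension $\geq 2$ and $\G_m$-invariant, hence empty.
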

\begin{proof}By Galois descent, the sheaf $\FF$ corresponds to a $\G_m$-equivariant sheaf on $\A_K^2-\{0\}$, with respect to the action given by $t(x,y) = (t^m x, t^n y)$; we will denote this $\G_m$-equivariant sheaf by abuse of notation still by $\FF$. This new sheaf $\FF$ is reflexive since pullbacks by flat maps preserve reflexive sheaves by Lemma \ref{PBReflx}. Using the inclusion $(\A_K^2-\{0\})//\G_m \hookrightarrow \A_K^2//\G_m$, we can by Lemma \ref{Reflextension} extend $\FF$ to a reflexive $\G_m$-equivariant sheaf on $\A_K^2$, which we denote by abuse of notation again by $\FF$. Since every reflexive sheaf on a regular $2$-dimensional scheme is locally free (\cite[Corollary 1.4]{Har80}), $\FF$ is locally free as a non-equivariant sheaf. The datum of a $\G_m$-equivariant sheaf on $\A_K^2$ is equivalent to that of a graded module over $K[t_1,t_2]$. Each shift $K[t_1,t_2][a]$ for $a\in\Z$ defines a $\G_m$-equivariant line bundle on $\A^2$, restricting to $\OO(a)$ on $\PP_K(m,n)$. Thus, our proposition follows directly from the following more general (and not very difficult) proposition.\end{proof}

\begin{prop}[\cite{Lam06}, II.4.6]Let $R$ be a ring graded in degrees $\geq 0$, $R_0$ its zeroth degree part and $R^+$ its part of positive degree. Then for every graded projective $R$-module, we have an isomorphism of graded $R$-modules $R\tensor_{R_0} (P/R^+P) \cong P$. In particular, if $R_0$ is a field, every graded $R_0$-module is a sum of shifts of $R_0$ and so $P$ is isomorphic to a sum of shifts of $R$.\end{prop}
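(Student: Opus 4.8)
The plan is to reduce the whole statement to the construction of a single natural graded $R$-linear comparison map
$\Psi\co R\tensor_{R_0}\overline{P}\to P$, where throughout I write $\overline{M}:=M/R^+M$ for a graded $R$-module $M$. Since $R/R^+\cong R_0$, the functor $\overline{(\,\cdot\,)}\cong R_0\tensor_R(\,\cdot\,)$ is the base change along $R\to R_0$, hence right exact, and $\overline{M}$ is a graded $R_0$-module. Granting that $\Psi$ is an isomorphism, the final ``in particular'' is immediate: when $R_0$ is a field the graded $R_0$-module $\overline{P}$ is just a graded vector space, so choosing a homogeneous basis exhibits it as a sum of shifts $\bigoplus_i R_0[n_i]$, and applying $R\tensor_{R_0}(\,\cdot\,)$ then gives $P\cong\bigoplus_i R[n_i]$.

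To build $\Psi$, I would first check that $\overline{P}$ is a \emph{projective} graded $R_0$-module. Because $P$ is graded projective, it is a graded direct summand of a graded free module $F=\bigoplus_\alpha R[d_\alpha]$; applying $\overline{(\,\cdot\,)}$ shows that $\overline{P}$ is a graded direct summand of $\overline{F}=\bigoplus_\alpha R_0[d_\alpha]$, which is free over $R_0$. Consequently the canonical graded $R_0$-linear surjection $P\to\overline{P}$ admits a graded $R_0$-linear section $s\co\overline{P}\to P$, and I define $\Psi$ by extending $s$ $R$-linearly, $r\tensor\overline{p}\mapsto r\cdot s(\overline{p})$; it is graded and $R$-linear by construction.

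The heart of the argument is then a graded Nakayama computation, applied twice. The key identity is $\overline{R\tensor_{R_0}\overline{P}}\cong R_0\tensor_{R_0}\overline{P}=\overline{P}$, under which $\overline{\Psi}$ becomes the identity of $\overline{P}$ (since $\Psi(1\tensor\overline{p})=s(\overline{p})$ maps to $\overline{p}$). For surjectivity, I would note that $C=\coker\Psi$ satisfies $\overline{C}=\coker\overline{\Psi}=0$ by right exactness, i.e.\ $R^+C=C$, which will force $C=0$. For injectivity, once $\Psi$ is known surjective, projectivity of $P$ splits it as $R\tensor_{R_0}\overline{P}\cong P\oplus N$ with $N=\ker\Psi$; applying $\overline{(\,\cdot\,)}$ and using $\overline{\Psi}=\id$ gives $\overline{N}=0$, i.e.\ $R^+N=N$, whence $N=0$.

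The step I expect to be the main obstacle is precisely the graded Nakayama implication ``$R^+M=M\Rightarrow M=0$'' invoked for $M=C$ and $M=N$. It is proved by inspecting the lowest nonzero degree $d$ of $M$: as $R$ is concentrated in degrees $\geq 0$, one gets $(R^+M)_d=\sum_{i>0}R_iM_{d-i}=0$, contradicting $M_d\neq 0$. This argument needs $M$ to be bounded below, so some care is required to see that $C$ and $N$ are bounded below (they are built from $P$ and from $R\tensor_{R_0}\overline{P}$, which is bounded below whenever $\overline{P}$ is). In the situation of Proposition \ref{ClassField} the module $P$ is finitely generated and hence bounded below, so this causes no trouble; for the fully general statement I would reduce to the bounded-below case, which is exactly where the delicacy lies.
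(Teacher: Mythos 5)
The paper does not actually prove this proposition: it is quoted verbatim from \cite{Lam06}, II.4.6, as a black box, so there is no in-paper argument to compare yours against. Judged on its own terms, your proof is the standard one, and it is complete and correct whenever $P$ is bounded below -- in particular whenever $P$ is finitely generated. The construction of $\Psi$ from a graded $R_0$-linear section $s$ (which exists because $\overline{P}$ is a graded summand of $\overline{F}$, hence graded projective over $R_0$), the identification $\overline{\Psi}=\id$, and the split-exactness argument showing $\overline{N}=0$ for the kernel are all fine. Since the paper's only use of the proposition, in the proof of Proposition \ref{ClassField}, concerns the graded module of a coherent locally free equivariant sheaf on $\A^2_K$ -- a finitely generated, hence bounded-below, module -- your argument fully covers what the paper needs.

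For the proposition \emph{as stated}, however (arbitrary graded projectives), the gap you flag at the end is genuine, and your proposed patch (``reduce to the bounded-below case'') is not available by any obvious device. Graded Nakayama really does fail without lower boundedness: $M=k[x,x^{-1}]$ over $R=k[x]$ satisfies $R^+M=M$ with $M\neq 0$. Nor do the standard reduction tools produce boundedness: the graded version of Kaplansky's theorem splits $P$ into countably generated graded projective summands, but a countably generated graded projective may still have generators in degrees tending to $-\infty$. What \emph{can} be rescued unconditionally is the injectivity half: once surjectivity is known, $N=\ker\Psi$ is a direct summand of the graded projective module $R\tensor_{R_0}\overline{P}$, hence itself graded projective, and for graded projectives Nakayama needs no boundedness, by a trace-ideal argument: writing $\tau$ for the trace ideal of $N$, the dual basis lemma gives $N=\tau N$ and $\tau=R^+\tau$, whence $\tau\subseteq\bigcap_n (R^+)^n=0$ because $R$ is graded in degrees $\geq 0$. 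The genuine sticking point is surjectivity: $\coker\Psi$ is merely a quotient of $P$, not projective, so neither degree-chasing nor the trace ideal applies to it, and a different idea is required -- this is precisely what the citation to \cite{Lam06} is carrying. So: correct, and sufficient for the paper's purposes, but not yet a proof of the full statement.
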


\begin{cor}For $K$ a field of characteristic not $2$ or $3$, every vector bundle on $\MM_K$ or $\MMb_K$ is a direct sum of line bundles.\end{cor}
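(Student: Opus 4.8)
The plan is to reduce the corollary about $\MM_K$ and $\MMb_K$ directly to the classification of reflexive sheaves on weighted projective lines already established. For the compactified case, recall from Example \ref{mmb} that when $\mathrm{char}(K)\neq 2,3$ we have an equivalence $\MMb_K \simeq \PP_K(4,6)$, under which $\omega$ corresponds to $\OO(1)$. Since every vector bundle is in particular reflexive, Proposition \ref{ClassField} applied with $m=4$, $n=6$ immediately shows that any vector bundle on $\MMb_K$ is a direct sum of line bundles of the form $\OO(a)$, i.e.\ a sum of powers of $\omega$. So the compactified statement is essentially a one-line consequence.

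For the uncompactified $\MM_K$, I would argue as follows. A vector bundle $\EE$ on $\MM_K$ is reflexive, so by the Proposition preceding this corollary (the one stating every reflexive sheaf on $\MM_R$ extends to one on $\MMb_R$) there is a reflexive sheaf $\FF$ on $\MMb_K$ restricting to $\EE$ along the open immersion $i\co \MM_K \hookrightarrow \MMb_K$. By Proposition \ref{ClassField} again, $\FF \cong \bigoplus_j \omega^{a_j}$ for suitable integers $a_j$. Pulling back along $i$, which is flat, commutes with direct sums, so $\EE = i^*\FF \cong \bigoplus_j i^*\omega^{a_j}$ is a direct sum of line bundles on $\MM_K$. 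Here one uses that $i^*$ carries the $\OO(a)$ (equivalently $\omega^a$) to line bundles on $\MM_K$, which is clear since pullback of a line bundle is a line bundle.

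The only subtlety worth flagging is that the extension $\FF$ need not itself be a vector bundle a priori — Lemma \ref{Reflextension} only guarantees reflexivity — but Proposition \ref{ClassField} is stated for arbitrary reflexive sheaves on $\PP_K(m,n)$ and concludes they are sums of line bundles, so reflexivity is all that is needed and $\FF$ turns out to be locally free after the fact. Thus no genuine obstacle arises: the entire content has been pushed into the two propositions about weighted projective lines, and this corollary is a formal consequence of combining them with the equivalence $\MMb_K\simeq\PP_K(4,6)$ and the flatness of the open immersion $\MM_K\hookrightarrow\MMb_K$.
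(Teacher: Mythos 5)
Your proof is correct and follows exactly the paper's intended route: extend a (reflexive) vector bundle on $\MM_K$ to a reflexive sheaf on $\MMb_K \simeq \PP_K(4,6)$, apply Proposition \ref{ClassField}, and restrict back along the flat open immersion. The subtlety you flag—that the extension is only known to be reflexive, not locally free—is precisely why the paper states Proposition \ref{ClassField} for reflexive sheaves rather than vector bundles, so your treatment matches the paper's.
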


\begin{remark}
 The analogue of Proposition \ref{ClassField} is certainly not true for higher dimensional (weighted) projective spaces. There is a nice overview of the topic of indecomposable vector bundles on projective spaces in the introduction of \cite{M-Z05}.
\end{remark}

\subsection{Vector Bundles on Weighted Affine Lines over a Discrete Valuation Ring}
We do not attempt to classify vector bundles over weighted projective lines over a regular local ring; this is probably difficult even if $\dim(R) = 1$ as in \cite{H-S99b}. We try to classify vector bundles over weighted \textit{affine} lines instead. 
\begin{defi}Let $R$ be a ring and $a,b$ be positive integers. Then a \textit{weighted affine line of type }$(a,b)$ is the non-vanishing locus of a section $\Delta \in \OO(m)(\PP_R(a,b))$ whose vanishing locus is isomorphic to $\Spec R//\mu_{\gcd(a,b)}$.\end{defi}
An example of a weighted affine line is $\MM_R$ if $\frac16\in R$. Indeed, it is the non-vanishing locus of $\Delta \in \OO(12)(\PP_R(4,6))$ where $\Delta = \frac{c_4^3-c_6^2}{1728}$ for $\PP_R(4,6) = (\Spec R[c_4,c_6] -\{0\})//\G_m$. 

We wish to prove the following theorem:
\begin{thm}\label{WPV}Let $a$ and $b$ be positive integers and $R$ be a discrete valuation ring. Then every vector bundle on a weighted affine line of type $(a,b)$ is a direct sum of line bundles of the form $\OO(n)$ (or rather their restrictions). \end{thm}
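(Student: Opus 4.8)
The plan is to rephrase the statement as one about graded modules and then reduce it to the field case fibrewise, using the good-quotient structure of the weighted affine line to control the lifting. Write $U$ for the weighted affine line, i.e.\ the non-vanishing locus of $\Delta$. Pulling back along $\A^2_R\setminus\{0\}\to\PP_R(a,b)$ presents $U$ as the quotient stack $\Spec S//\G_m$, where $S=R[t_0,t_1][\Delta^{-1}]$ is graded with $|t_0|=a$, $|t_1|=b$ and where $\Delta$ has become a homogeneous unit. Since $\G_m$ is linearly reductive, $\QCoh(U)$ is equivalent to graded $S$-modules, vector bundles correspond to finitely generated graded projective $S$-modules, the line bundle $\OO(n)$ corresponds to the shift $S[n]$, and---crucially---$U$ is cohomologically affine: $H^i(U;\FF)=0$ for $i>0$ and every quasi-coherent $\FF$, with $H^0(U;-)$ the exact degree-zero-part functor. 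Thus the theorem becomes the assertion that every finitely generated graded projective $S$-module is a direct sum of shifts $S[n]$.

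First I would identify the coarse space. The degree-zero subring $B:=S_0$ is a polynomial ring $R[j]$, so $\Spec B=\A^1_R$ is regular of dimension two, and the coarse map $p\colon U\to\Spec B$ is the functor $(-)_0$. Two inputs are available over $B$: reflexive sheaves on the regular surface $\A^1_R$ are locally free (exactly as in the proof of Proposition \ref{ClassField}), and finitely generated projective $R[j]$-modules are free by Quillen--Suslin, since $R$ is a principal ideal domain. A monomial computation parallel to the \v{C}ech calculation for $\PP_R(k,l)$ shows that each graded piece $S_n=p_*\OO(n)$ is a free $B$-module of rank one when $\gcd(a,b)\mid n$ and vanishes otherwise. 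The aim is then to show, for a vector bundle $\EE$, that each graded piece $M_n=p_*(\EE\otimes\OO(-n))$ is reflexive over $\A^1_R$---hence locally free, hence free---and to reassemble the pieces into a sum of shifts of $S$.

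To reassemble I would pass to the two fibres. Over the residue field $k$ and the fraction field $K$ the restrictions $\EE_k$ and $\EE_K$ live on weighted affine lines over fields, so by the field case (Proposition \ref{ClassField}, after extending to the projective line, splitting, and restricting) they decompose as $\bigoplus_i\OO(n_i)$ and $\bigoplus_j\OO(m_j)$. Cohomological affineness is what permits lifting: applying $H^0(U;-)$ to $0\to\EE(-n)\xrightarrow{\pi}\EE(-n)\to\EE_k(-n)\to 0$ and using $H^1(U;\EE(-n))=0$ shows that $H^0(U;\EE(-n))\to H^0(U_k;\EE_k(-n))$ is surjective, and likewise for homomorphisms out of $\EE$. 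Hence the inclusions and retractions of the summands of $\EE_k$ lift to maps over $R$, which I would assemble into a graded homomorphism $\phi\colon\bigoplus_i S[n_i]\to M$ reducing modulo $\pi$ to an isomorphism.

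The main obstacle is precisely that $\phi$ need not be an isomorphism over the generic fibre. Because $\End(\OO(n))=H^0(U;\OO)=R[j]$ is \emph{not} local, the naive Nakayama argument fails: $\coker\phi$ is $\pi$-divisible but supported over the non-local surface $\A^1_R$, so being an isomorphism modulo $\pi$ does not force $\phi$ to remain one after inverting $\pi$. Equivalently, the real content is to prove that the generic and special splitting types $\{m_j\}$ and $\{n_i\}$ coincide. This is the point where affineness is indispensable: on the compactified (projective) weighted line the splitting type can jump in a family, the jump being measured by $H^1$ of the differences $\OO(m-m')$ of the constituent line bundles, whereas on $U$ all these $H^1$ groups vanish. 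I expect that combining this vanishing with the freeness of the graded pieces over $R[j]$ and the constancy of the numerical data in the flat family (ranks in each degree, and the $\mu$-weights at the stacky points, which are locally constant on the connected base $\Spec R$) will force the two types to agree and upgrade $\phi$ to a global isomorphism, giving $\EE\cong\bigoplus_i\OO(n_i)$; making this matching-and-lifting step precise is where essentially all the difficulty lies.
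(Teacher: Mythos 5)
Your setup (graded modules over $S$, cohomological affineness of $U$, the field case as input, lifting maps via $H^1(U;-)=0$) matches the paper's general framework, but from there you take a different route, and the step you yourself flag as missing is a genuine gap, not a technicality. Being an isomorphism modulo $\pi$ forces nothing over the generic fibre: the cokernel of your lifted map $\phi\co\bigoplus_i S[n_i]\to M$ is finitely generated and $\pi$-divisible, hence killed by some $s\equiv 1 \pmod{\pi}$, and such elements (e.g.\ $1-\pi j$) cut out nonempty closed subsets of $U$ disjoint from the special fibre. Equivalently, $U\to\Spec R$ is not proper, so $U$ has closed points lying over the generic point of $\Spec R$, and a map invertible along the special fibre can degenerate at those points. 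Nothing in your outline rules this out, and the remedies you gesture at do not suffice as stated: constancy of the ranks of the graded pieces (which one can indeed extract, since each $M_n$ is a finitely generated $R$-torsion-free $R[j]$-module, hence free after localizing $R[j]$ at $(\pi)$) would at best show that the special and generic splitting types agree; even granting that, an arbitrary lift $\phi$ of a special-fibre isomorphism can still have determinant of the form $1+\pi g(j)$, a non-unit of $R[j]$, so one must choose $\phi$ controlling both fibres simultaneously --- an approximation problem your argument does not address.

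The paper circumvents exactly this obstacle by following Horrocks' strategy: it first proves that every vector bundle on $U$ extends to the weighted projective line $\PP_R(a,b)$ (after a reduction to $\gcd(a,b)=1$), using the formal glueing theorem of the appendix --- a bundle on $\PP_R(a,b)$ is equivalent to a bundle on $U$, a projective $R[[q]]$-module $P$, and an isomorphism $\EE(\hat{U})\cong P[\frac1q]$ --- together with Horrocks' theorem that projective modules over $R((q))$ are free. On the proper stack $\PP_R(a,b)$ the naive lifting argument does work: a section lifted from the special fibre is nowhere vanishing there precisely because every closed point of $\PP_R(a,b)$ lies over the closed point of $\Spec R$, so one can split off line subbundles inductively; restricting back to $U$, the resulting extensions split because all $\Ext^1$-groups between vector bundles on $U$ vanish. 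In other words, the compactification is not a convenience but the mechanism that makes ``isomorphism mod $\m$ implies isomorphism'' true; to complete a proof along your lines you would need a substitute for it.
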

\begin{cor}Every vector bundle on $\MM_R$ for $R$ a discrete valuation ring with $\frac16\in R$ is a sum of tensor powers of $\omega$. In particular, this holds for $R = \Z_{(p)}$ for $p>3$. \end{cor}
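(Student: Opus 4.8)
The Corollary is a formal consequence of the preceding Theorem. The hypothesis $\frac16\in R$ makes $6$ invertible, so by Example \ref{mmb} we have $\MMb_R\simeq\PP_R(4,6)$ with $\omega$ corresponding to $\OO(1)$, and $\MM_R\subset\MMb_R$ is the non-vanishing locus of $\Delta\in\OO(12)(\PP_R(4,6))$, that is, a weighted affine line of type $(4,6)$ (here $\gcd(4,6)=2$, and the cusp is the removed $\mu_2$-point). For a discrete valuation ring $R$ the Theorem then says that every vector bundle on $\MM_R$ is a direct sum of restrictions of the $\OO(n)$; since $\OO(n)=\OO(1)^{\otimes n}$ corresponds to $\omega^n$, each summand is a tensor power of $\omega$. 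The final sentence is the special case $R=\Z_{(p)}$: this is a discrete valuation ring, and for $p>3$ the element $6$ is a unit, so $\frac16\in R$ and the hypotheses are met. Since this reduction is immediate, the real content lies in the Theorem, whose proof I now sketch; this is where the only genuine obstacle appears.

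I would work module-theoretically. Write $U_R=[\Spec B/\G_m]$ with $B=R[x_a,x_b,\Delta^{-1}]$ graded by $|x_a|=a$, $|x_b|=b$, so that $U_R$ is the non-vanishing locus of $\Delta$ inside $\PP_R(a,b)=(\Spec R[x_a,x_b]-\{0\})//\G_m$. As $\G_m$ is linearly reductive, Galois descent identifies vector bundles on $U_R$ with finitely generated graded projective $B$-modules and $\OO(n)|_{U_R}$ with the shift $B[n]$; the Theorem becomes the assertion that every such module is a sum of shifts. Since $\Delta$ is an invertible section of $\OO(m)$ on $U_R$ (where $m=\deg\Delta$), multiplication by $\Delta$ gives $\OO(n)|_{U_R}\cong\OO(n+m)|_{U_R}$, so there are only finitely many line bundles of this form. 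Moreover $U_R$, a quotient of an affine scheme by the linearly reductive group $\G_m$, is cohomologically affine, whence $H^{>0}(U_R;\FF)=0$ for all quasi-coherent $\FF$; in particular $\Ext^1_{U_R}(\VV,\WW)=H^1(U_R;\VV^\vee\tensor\WW)=0$ for vector bundles, so every short exact sequence of vector bundles on $U_R$ splits. It therefore suffices to split off one line bundle at a time: if every nonzero vector bundle admits a sub-bundle isomorphic to some $\OO(n)|_{U_R}$ with locally free quotient, induction on the rank finishes.

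To produce such a summand I would reduce modulo a uniformizer $\pi$, with residue field $k$ and fraction field $K$. On the special fibre $U_k$ the field case applies: extending $\EE|_{U_k}$ to a reflexive, hence split, sheaf on $\PP_k(a,b)$ via Lemma \ref{Reflextension} and Proposition \ref{ClassField} gives $\EE|_{U_k}\cong\bigoplus_i\OO(n_i)|_{U_k}$. Because $H^1(U_R;\OO(l))=0$, the reduction maps $\Hom_{U_R}(\OO(l),\EE)\to\Hom_{U_k}(\OO(l)|_{U_k},\EE|_{U_k})$ are surjective, so the inclusion and projection of one summand lift to $\tilde\iota\co\OO(n)\to\EE$ and $\tilde\rho\co\EE\to\OO(n)$ with $\tilde\rho\tilde\iota\equiv\id\pmod\pi$. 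The composite $u=\tilde\rho\tilde\iota\in\End(\OO(n))=H^0(U_R;\OO)=B_0$ satisfies $u\equiv1\pmod\pi$, and when $u$ is a unit the map $u^{-1}\tilde\rho$ retracts $\tilde\iota$, splitting off $\OO(n)$.

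The hard part is exactly that $u=1+\pi g$ with $g\in B_0$ need not be a unit: it is invertible on $U_k$ but can vanish along the generic fibre $U_K$. Equivalently, the lifted comparison map $\bigoplus_i\OO(n_i)\to\EE$ is an isomorphism modulo $\pi$ while its splitting type may jump between the special and generic fibres. Ruling this out is where the discrete-valuation-ring hypothesis (as opposed to an arbitrary regular local ring) is essential --- it is the graded, stacky analogue of Seshadri's theorem that finitely generated projective modules over $R[x]$, for $R$ a principal ideal domain, are free. I would resolve it by combining the special-fibre decomposition with the splitting of $\EE|_{U_K}$ over the generic fibre (again the field case) and matching the two: either by descending along the finite flat cover $V\to U_R$ furnished by Proposition \ref{FinitePn}, over which $V=\Spec C$ is an open subscheme of $\mathbb{P}^1_R$ with $C$ a localization of $R[x]$ so that Seshadri-type results apply, or --- matching the use of the completion results in the appendix --- by presenting $U_R$ via faithfully flat descent as the gluing of $U_K$ with the formal completion of $U_R$ along $U_k$, splitting $\EE$ formally by the Nakayama argument above and gluing the formal and generic decompositions in Beauville--Laszlo fashion. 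Either way the substantive step is that the special and generic splitting types agree and are realised by a single global decomposition; granting this, $\tilde\iota$ (after correction) splits off an $\OO(n)$ and the induction closes.
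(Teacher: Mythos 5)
Your first paragraph is exactly the paper's (implicit) proof of the Corollary, which the paper states without separate proof because it is immediate: Example \ref{mmb} identifies $\MMb_R \simeq \PP_R(4,6)$ with $\omega \leftrightarrow \OO(1)$ once $\frac16 \in R$, the stack $\MM_R$ is the weighted affine line of type $(4,6)$ cut out by $\Delta \in \OO(12)$, and the Theorem then splits every bundle into restrictions of $\OO(n) \cong \OO(1)^{\tensor n} \leftrightarrow \omega^n$; the specialization to $\Z_{(p)}$, $p>3$, is clear. If the Theorem is taken as given, as the paper does, your proof of the Corollary is complete and identical to the paper's.

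The sketch of the Theorem that you append is, however, not a proof, and you concede the decisive point yourself: ``granting'' that the special and generic splitting types agree and are realized by a single global decomposition is granting essentially the whole theorem. It is worth recording how the paper avoids this matching problem, because its route differs from both of your options. The paper (i) reduces to $\gcd(a,b)=1$ by decomposing the graded module category for weights $(la,lb)$ into an $l$-fold product; (ii) glues along the cusp rather than along the special fibre: by Theorem \ref{glueing} and Lemma \ref{AffineEquivalence}, a vector bundle on $\PP_R(a,b)$ is the datum of a bundle $\EE$ on the affine line $U$, a finitely generated projective $R[[q]]$-module $P$, and an isomorphism $\EE(\hat{U}) \cong P[\frac1q]$; since by Horrocks every projective $R((q))$-module is free, one may always take $P = R[[q]]^{\rk \EE}$ --- so one only needs to \emph{extend} $\EE$ across the cusp, and no compatibility between two decompositions ever has to be checked (this is also precisely where the discrete valuation ring hypothesis enters); (iii) splits the extended bundle on the proper stack $\PP_R(a,b)$ by lifting a nowhere-vanishing section from the special fibre (Olsson's completion theorem plus the vanishing of $H^1$ of $\OO(n)$ for $n \geq 0$), and then kills all resulting extensions over $U$ by Lemma \ref{gradetriv}. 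In your option (b), the formal half is fine --- $1+\pi g$ is invertible in the $\pi$-complete ring, so $\EE$ does split over the completion along $U_k$ --- but to descend you must show that the gluing matrix over the generic fibre of the completion factors as (invertible over the completion) times (invertible over $U_K$), a Birkhoff-type factorization which is exactly the hard content and is nowhere supplied. Your option (a) is circular in a different way: $V \to U_R$ is the quotient by $\mu_4 \times \mu_6$, so descending from $V$ means classifying $(\Z/4 \times \Z/6)$-graded projective $C$-modules up to sums of shifts, which is the original problem in equivalent form --- the positive-grading argument behind Proposition \ref{ClassField} has no analogue for a finite grading group over a discrete valuation ring; moreover, that $C$ is a localization of $R[x]$ itself needs an argument (one must produce a linear factor of the pulled-back $\Delta$), and Seshadri's theorem concerns $R[x]$, not its localizations.
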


We will mimick the proof strategy of \cite{Hor64}, where Horrocks proves the analogous result for non-weighted affine lines. More precisely, we will first show that a vector bundle on a weighted affine line is trivial if it can be extended to the corresponding weighted projective line (for an arbitrary noetherian local ring $R$) and then that vector bundles can be extended (for $R$ a regular local ring of dimension $\leq 1$).

\begin{lemma}\label{Vectornot}Let $R$ be a local ring and $\XX$ be an algebraic stack over $R$ such that $\XX \to \Spec R$ is a closed morphism. Assume furthermore that there is a closed fpqc cover $Y \to \XX$ for some reduced scheme $Y$. Let 
    \[0\to  \EE \to \FF \to \GG \to 0\]
be a short exact sequence of sheaves on $\XX$ where $\EE$ and $\FF$ are vector bundles. Assume that 
\[0\to \EE\tensor_Rk \to \FF\tensor_Rk \to \GG\tensor_Rk \to 0\]
is a short exact sequence of vector bundles on $\XX\times_{\Spec R} \Spec k$ for $R/\m_R = k$ the residue field. Then $\GG$ is a vector bundle.
\end{lemma}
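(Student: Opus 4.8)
The strategy is to descend the question to the reduced scheme $Y$, where local freeness can be detected by the constancy of fibre dimensions, and then to propagate the fibrewise information from the special fibre to all of $Y$ using the closedness hypothesis. First I would reduce to $Y$. Since $Y\to\XX$ is fpqc, the property of being finite locally free of a given rank descends along it, so it suffices to show that $\GG|_Y$ is a vector bundle on $Y$. Pulling the given sequence back along the flat map $Y\to\XX$ keeps it exact, and pullbacks of vector bundles are vector bundles, so on $Y$ we get an exact sequence with map $\phi\co\EE|_Y\to\FF|_Y$ between vector bundles and $\GG|_Y=\coker\phi$ finitely presented. As $Y$ is reduced, such a sheaf is a vector bundle precisely when its fibre dimension $y\mapsto\dim_{k(y)}(\GG|_Y)\otimes k(y)$ is locally constant, so the whole problem becomes the computation of this function.

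Next I would read off the fibre dimension on the special fibre. By right exactness one has $\dim_{k(y)}(\GG|_Y)\otimes k(y)=\rk\FF-\rk(\phi\otimes k(y))$, so the claim is equivalent to $\rk(\phi\otimes k(y))$ being fibrewise equal to $\rk\EE$. On the special fibre $Y_s=Y\times_{\Spec R}\Spec k$ the reduced sequence is, by hypothesis and flat base change along the fpqc map $Y_s\to\XX\times_{\Spec R}\Spec k$, a short exact sequence of vector bundles with $\GG|_{Y_s}$ locally free; tensoring with $k(y)$ and using $\Tor_1(\GG|_{Y_s},k(y))=0$ shows that $\phi\otimes k(y)$ is injective, hence of full rank $\rk\EE$, for every $y\in Y_s$. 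Thus the fibre dimension of $\GG|_Y$ equals the constant $\rk\FF-\rk\EE$ at every point of the special fibre.

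The main point is to propagate this from $Y_s$ to all of $Y$, and here the closedness hypothesis is essential: as $\XX\to\Spec R$ and $Y\to\XX$ are closed, so is $Y\to\Spec R$, whence for any $y\in Y$ the image of $\overline{\{y\}}$ in the local scheme $\Spec R$ is a nonempty closed set and therefore contains the closed point. Thus every $y\in Y$ admits a specialization $y'\in Y_s$. The function $y\mapsto\rk(\phi\otimes k(y))$ is lower semicontinuous (the locus where a fixed minor is invertible is open) and hence does not increase under specialization, so $\rk(\phi\otimes k(y))\geq\rk(\phi\otimes k(y'))=\rk\EE$; together with the trivial bound $\rk(\phi\otimes k(y))\leq\rk\EE$ this forces equality. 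Therefore the fibre dimension of $\GG|_Y$ is the constant $\rk\FF-\rk\EE$ on all of $Y$, so $\GG|_Y$ is locally free of that rank, and fpqc descent along $Y\to\XX$ shows that $\GG$ is a vector bundle.

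The delicate step is precisely this last propagation: upper semicontinuity of fibre dimension alone only yields $\dim(\GG|_Y)\otimes k(y)\leq\rk\FF-\rk\EE$ away from the special fibre, and obtaining the reverse inequality — equivalently, combining the lower semicontinuity of the rank of $\phi$ with the fact that every point of $Y$ degenerates into $Y_s$ — is where both the reducedness of $Y$ (needed for the local-freeness criterion) and the closedness of $\XX\to\Spec R$ (needed to reach the special fibre) are genuinely used. I expect no trouble in the descent bookkeeping; the only real care is to apply the fibre-dimension criterion on the reduced scheme $Y$ rather than on the possibly non-reduced stack $\XX$.
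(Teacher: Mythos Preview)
Your proof is correct. Both you and the paper use the closedness of $Y\to\Spec R$ to reach the special fibre and the reducedness of $Y$ to conclude local freeness, but the execution differs. The paper works at the local ring $A=\OO_{Y,y}$ of a closed point: it lifts a basis of $\GG_y\otimes_R k$ to a surjection $A^{\rk\FF-\rk\EE}\to\GG_y$ (surjective by Nakayama, using that closed points of $Y$ lie over the closed point of $\Spec R$), and then passes to the fraction field of $A$ to show the kernel is torsion, hence zero. You instead invoke the global criterion that a finitely presented sheaf on a reduced scheme is locally free exactly when its fibre dimension is locally constant, and control that function via the lower semicontinuity of $\rk(\phi\otimes k(y))$ combined with your specialization argument. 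Your route is a bit more streamlined---it treats all points of $Y$ uniformly rather than singling out closed points, and it sidesteps the paper's passage to the fraction field (and the accompanying claim that the reduced local ring $A$ is integral)---while the paper's argument has the virtue of exhibiting the free module structure explicitly.
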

\begin{proof}Let $A$ be the local ring of an arbitrary closed point in $Y$. It is enough to show that $\GG(\Spec A)$ is a free $A$-module. We have exact sequences
\[0\to  \EE(\Spec A) \to \FF(\Spec A) \to \GG(\Spec A) \to 0\]
and
   \[0\to  \EE(\Spec A)\tensor_Rk \to \FF(\Spec A)\tensor_R k \to \GG(\Spec A)\tensor_R k \to 0.\]
There is an isomorphism $\GG(\Spec A)\tensor_R k \cong (A\tensor_R k)^{\rk\FF - \rk \EE}$. Lifting the basis elements, we get an $A$-linear map $f\co A^{\rk\FF-\rk \EE} \to \GG(\Spec A)$. It is surjective, as can be tested at the closed point of $\Spec A$, which maps to the closed point of $\Spec R$. 

The ring $A$ is reduced, hence integral. Denote by $K$ the quotient field of $A$. Then 
\[f_K\co A^{\rk\FF-\rk \EE}\tensor_A K \to \GG(\Spec A)\tensor_A K\]
 is also a surjection; hence an isomorphism since we have a short exact sequence
\[0 \to \EE(\Spec K) \to \FF(\Spec K) \to \GG(\Spec K) = \GG(\Spec A)\tensor_A K \to 0,\]
which implies that $\GG(\Spec A)\tensor_A K \cong A^{\rk\FF-\rk \EE}\tensor_A K$. Since $K$ is a flat $A$-module, we obtain $\ker(f) \tensor_A K = 0$ and $\ker(f)$ is completely $A$-torsion. But $A^{\rk \FF - \rk \EE}$ is $A$-torsionfree. Therefore, $\ker(f) = 0$ and $f$ is an isomorphism. 
\end{proof}

\begin{prop}Let $R$ be a noetherian reduced local ring. Let $\EE$ be vector bundle on $X = \PP_R(a,b)$. Then there exists a short exact sequence
\[0 \to \OO(l) \to \EE \to \EE/\OO(l)\to 0\]
with $\EE/\OO(l)$ a vector bundle on $X$ and $l\in\Z$. \end{prop}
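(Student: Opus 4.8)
The plan is to construct a morphism $\phi\co\OO(l)\to\EE$ that is injective on fibres at \emph{every} point of $X$; such a $\phi$ is locally a split injection of vector bundles, so $\EE/\OO(l)=\coker\phi$ is a vector bundle and the asserted sequence $0\to\OO(l)\to\EE\to\EE/\OO(l)\to0$ results. Let me first reduce. Since $d:=\gcd(a,b)$ acts trivially in the defining $\G_m$-action (cf.\ Remark \ref{genericautomorphism}), $X$ is a $\mu_d$-gerbe and every vector bundle decomposes into $\mu_d$-isotypic summands $\EE=\bigoplus_{w\in\Z/d}\EE^{(w)}$ over $R$, with $\OO(m)$ of weight $m\bmod d$. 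It suffices to split a sub-line-bundle off one summand of rank $\ge2$; if every $\EE^{(w)}$ has rank $\le1$ then $\EE$ is already a sum of line bundles (each $\cong\OO(l)$ since $\Pic X=\Z$ for $R$ local). So I assume $\EE=\EE^{(w_0)}$ is isotypic of rank $r\ge2$, and write $k=R/\m$, $\pi\co X\to\Spec R$, and $\EE_k\cong\bigoplus_{i=1}^r\OO(a_i)$ on $X_k\cong\PP_k(a,b)$ (Proposition \ref{ClassField}), with all $a_i\equiv w_0\pmod d$.

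The key point is to choose the twist $l$ \emph{very negative} — rather than to split off the maximal sub-line-bundle $\OO(\max_i a_i)$, whose section on $X_k$ need not extend to $X$ — taking $-l\gg0$ with $l\equiv a_1\pmod{\lcm(a,b)}$. The sheaf $\EE(-l)=\EE\otimes\OO(-l)$ is flat over $R$, and since $\pi$ is proper (Proposition \ref{weightedproper}) with $\pi$-ample $\OO(1)$, Serre vanishing gives $R^1\pi_*\EE(-l)=0$, hence $H^1(X_y;\EE_y(-l))=0$ for all $y\in\Spec R$. The theorem on cohomology and base change then shows that $\pi_*\EE(-l)$ commutes with base change; in particular
\[H^0(X;\EE(-l))\otimes_Rk\ \xrightarrow{\ \sim\ }\ H^0(X_k;\EE_k(-l))\]
is an isomorphism. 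This is precisely what makes it possible to lift a section from the special fibre.

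On $X_k$ we have $\EE_k(-l)=\bigoplus_i\OO(c_i)$ with $c_i=a_i-l\gg0$, $\lcm(a,b)\mid c_1$ and $d\mid c_i$ for all $i$. Writing $u,v$ for the coordinates of weights $a,b$, the section $\bar s=(u^{c_1/a}+v^{c_1/b},\,u^pv^q,0,\dots,0)$ (with $u^pv^q$ any monomial of degree $c_2$) is nonzero at both torus-fixed points and at every other point, hence nowhere vanishing; this construction is valid over an arbitrary, possibly finite, residue field. Using the isomorphism above I lift $\bar s$ to $s\in H^0(X;\EE(-l))=\Hom(\OO(l),\EE)$, obtaining $\phi$ with $\phi_k=\bar s$.

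Finally I would propagate non-vanishing from $X_k$ to all of $X$ by properness. The vanishing locus $Z=\{x\in X:\phi\otimes\kappa(x)=0\}$ is closed and disjoint from $X_k$ (as $\phi_k=\bar s$ is nowhere vanishing); since $\pi$ is proper, hence closed, $\pi(Z)$ is closed in $\Spec R$ and misses the closed point $\m$, and over a local ring a nonempty closed subset must contain $\m$, so $Z=\varnothing$. Thus $\phi$ is everywhere fibrewise injective and $\EE/\OO(l)$ is locally free, as required. The main obstacle is the lifting: over a field one just takes a top summand, whereas over $R$ a prescribed section of $\EE_k(-l)$ need not extend, and the two devices that resolve this are (i) twisting far enough to kill $H^1$, so that cohomology and base change forces the reduction map to be onto, and (ii) using properness over the local base to upgrade non-vanishing on the closed fibre to non-vanishing on all of $X$.
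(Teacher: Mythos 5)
Your construction for the isotypic rank $\geq 2$ case is sound, but the reduction leaves a genuine gap at rank $\leq 1$. There you appeal to ``$\Pic X=\Z$ for $R$ local'' to conclude that each rank-one isotypic summand is some $\OO(l)$. That assertion is not available: it is not in the paper (Proposition \ref{ClassField} is only over a field), and it is in fact precisely the rank-one instance of the proposition you are proving (a sequence $0\to\OO(l)\to\EE\to\EE/\OO(l)\to 0$ with locally free rank-zero cokernel forces $\EE\cong\OO(l)$); in the paper this Picard-group statement comes out as a \emph{consequence} of the proposition, so as written your argument is circular. Nor can your own device produce it: your lifting mechanism needs $-l\gg 0$ so that the twists on all fibres are large, but then the special fibre of a line bundle $\EE(-l)$ is $\OO(c_1)$ with $c_1>0$, and no section of $\OO(c_1)$, $c_1>0$, is nowhere vanishing on $\PP_k(a,b)$ (restricting a homogeneous polynomial to $v=1$ either gives a nonconstant polynomial in $u$, which has a root, or forces vanishing at $(1,0)$). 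This is exactly why your $\bar s$ needs two components, i.e.\ rank $\geq 2$. The case rank $\leq 1$ therefore needs a separate mechanism: for instance, twist so the special fibre is $\OO_{X_k}$, note that $h^1(X_y;\EE_y)\leq h^1(X_k;\OO)=0$ for every $y$ by upper semicontinuity (every point of $\Spec R$ specializes to $\m$), and then lift the section $1$ by base change; or use the paper's completion/formal-functions argument, which treats all ranks uniformly because its normalization ($\EE_k\cong\bigoplus_{n\geq 0}r_n\OO(n)_k$ with $r_0\neq 0$) yields the nowhere-vanishing section and the $H^1$-vanishing simultaneously.

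A secondary issue: you invoke ``$\pi$-ample $\OO(1)$'', Serre vanishing, and cohomology and base change for the Artin stack $\PP_R(a,b)$ as if it were a projective scheme. When $\gcd(a,b)=d>1$ the generic stabilizer $\mu_d$ (Remark \ref{genericautomorphism}) acts nontrivially on $\OO(1)$, so $\OO(1)$ is not ample in any naive sense; the statements you need are true (e.g.\ via the two-term \v{C}ech complex of Section \ref{ProjectiveLines}, where the relevant $H^1$ is a graded piece of local cohomology of a finitely generated graded module and vanishes in large degrees, together with a stack version of cohomology and base change), but they require justification or citation, which is precisely the machinery the paper avoids by using Olsson's formal-functions theorem and the explicit vanishing $H^1(X_k;\OO(n)_k)=0$ for $n\geq 0$. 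Apart from these points your route is genuinely different and has a real merit: your explicit section $\bar s$ is indeed nowhere vanishing (on $\{uv=0\}$ the first component survives, off it the monomial does), and by propagating non-vanishing to all of $X$ via properness you conclude directly that $\coker\phi$ is locally free, bypassing Lemma \ref{Vectornot} entirely — in particular your argument for rank $\geq 2$ never uses reducedness of $R$, which the paper needs exactly in that lemma.
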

\begin{proof}
 Write $\m$ for the maximal ideal of $R$ and $k$ for $R/\m$. The sheaf $\EE_k = \EE\tensor_R k$ is a locally free sheaf on $X_k = X \times_{\Spec R}\Spec k$. Denote by $\OO(n)_k$ the sheaf $\OO(n)$ on $X_k$. By Proposition \ref{ClassField}, we have an isomorphism $\EE_k\cong \bigoplus_n r_n \OO(n)_k$. By tensoring $\EE$ with a suitable power of $\OO(1)$, we can assume that $r_n= 0$ for $n<0$ and $r_0 \neq 0$. 

Denote the completion of $R$ with respect to $\m$ by $\widehat{R}$. Since $\widehat{R}$ is flat over $R$, we have $\Gamma(\EE)\tensor_R \widehat{R} \cong \Gamma(\EE\tensor_R \widehat{R})$. By Theorem 11.ii from \cite{Ols07}, we get thus an isomorphism
\[\Gamma(\EE)\tensor_R \widehat{R} \xrightarrow{\cong} \varprojlim_i \Gamma(\EE\tensor_R R/\m^i).\]

We are going to show that the homomorphisms in the inverse limit are surjective. Since $\EE$ is locally free, the sequence
\[0\to \EE \tensor_R \m^{i-1}/\m^i \to \EE \tensor_R R/\m^i \to \EE \tensor_R R/\m^{i-1} \to 0\]
is exact for $i\geq 2$. The $R$-module $\m^{i-1}/\m^i$ is isomorphic to $k^N$ for some $N$ for $i\geq 1$. So
\[H^1(X; \EE\tensor_R \m^{i-1}/\m^i) \cong \bigoplus^N H^1(X_k;\EE_k) \cong \bigoplus^N \bigoplus_n r_n H^1(X_k; \OO(n)_k).\]
By Proposition \ref{CohomologyProjective}, $H^1(X;\OO(n)_k)$ vanishes for $n\geq 0$. Thus, 
\[\Gamma(\EE\tensor_R R/\m^i) \to \Gamma(\EE\tensor_R R/\m^{i-1})\]
 is surjective for $i\geq 2$ and thus also 
\[\Gamma(\EE)\tensor_R \widehat{R} \cong \varprojlim_i \Gamma(\EE\tensor_R R/\m^i) \to \Gamma(\EE_k).\]
As the target is a $k$-module, the morphism factors over 
\[\Gamma(\EE) \tensor_R k \to \Gamma(\EE_k)\]
and thus also
\[\Gamma(\EE) \to \Gamma(\EE)\tensor_R k \to \Gamma(\EE_k)\] is surjective. 

Since $r_0 \neq 0$, there is an injective map $\overline{f}\co \OO_{X_k} \to \EE_k$ whose cokernel is a vector bundle of rank $\rk \EE -1$. The map $\overline{f}$ corresponds to a nowhere vanishing section $\overline{s} \in \Gamma(\EE_k)$ and since $\Gamma(\EE) \to \Gamma(\EE_k)$ is surjective, we can lift $\overline{s}$ to a section $s\in\Gamma(\EE)$; since every closed point of $X$ is over the closed point of $\Spec R$, this is also nowhere vanishing. The section $s$ corresponds to an injective map $\OO_X \xrightarrow{f} \EE$, yielding a short exact sequence
\[0 \to \OO_X \xrightarrow{f} \EE \to \EE/\OO_X \to 0.\]
By Proposition \ref{weightedproper} and Proposition \ref{FinitePn}, $\PP_R(a,b)$ is a proper Artin stack and $\mathbb{P}_R^1\to \PP_R(a,b)$ a proper fpqc map from a reduced scheme. As proper maps are closed, Lemma \ref{Vectornot} implies that $\EE/\OO_X$ is a vector bundle again.  
\end{proof}

\begin{cor}\label{VectorCor}Let $R$ be a noetherian reduced local ring. Let $\EE$ be vector bundle on $X = \PP_R(a,b)$ and $j\co U \hookrightarrow X$ be an open substack such that $\Ext^1(\FF; \GG)$ vanishes for all vector bundles $\FF$ and $\GG$. Then $j^*\EE$ is a direct sum of line bundles.\end{cor}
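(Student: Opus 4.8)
The plan is to argue by induction on the rank of $\EE$, using the preceding proposition as the engine that peels off one line bundle at a time. The base case $\rk\EE = 1$ is immediate: a rank-one vector bundle is a line bundle, so its pullback $j^*\EE$ along $j$ is again a line bundle, hence trivially a direct sum of line bundles.

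For the inductive step, I would first invoke the previous proposition to obtain a short exact sequence $0 \to \OO(l) \to \EE \to \EE/\OO(l) \to 0$ on $X$ in which $\EE/\OO(l)$ is a vector bundle of rank $\rk\EE - 1$. Since $j$ is an open immersion, it is flat, so pulling back preserves exactness and yields a short exact sequence
\[0 \to j^*\OO(l) \to j^*\EE \to j^*(\EE/\OO(l)) \to 0\]
of vector bundles on $U$. This extension is classified by a class in $\Ext^1\!\big(j^*(\EE/\OO(l)),\, j^*\OO(l)\big)$, computed on $U$. Both $j^*(\EE/\OO(l))$ and $j^*\OO(l)$ are vector bundles, so by the standing hypothesis on $U$ this $\Ext^1$-group vanishes; hence the sequence splits and $j^*\EE \cong j^*\OO(l) \oplus j^*(\EE/\OO(l))$. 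Now $j^*\OO(l)$ is a line bundle, and $j^*(\EE/\OO(l))$ is a direct sum of line bundles by the inductive hypothesis applied to the vector bundle $\EE/\OO(l)$ on $X$ (with the same open substack $U$). Therefore $j^*\EE$ is a direct sum of line bundles, which completes the induction.

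I do not expect a serious obstacle here: the genuine content has already been absorbed into the preceding proposition, which produces the sub-line-bundle $\OO(l)$ together with a \emph{locally free} quotient, and the corollary is then a formal splitting argument. The only points requiring care are that the open immersion $j$ is flat (so that pullback is exact and carries vector bundles to vector bundles), and that the extension class genuinely lives in an $\Ext^1$-group between two vector bundles on $U$, so that the hypothesis applies verbatim. It is essential that $\EE/\OO(l)$ be a vector bundle rather than merely a coherent sheaf, since both the inductive hypothesis and the $\Ext^1$-vanishing require this; this is precisely what the preceding proposition guarantees.
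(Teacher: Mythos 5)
Your proof is correct and follows essentially the same route as the paper's: invoke the preceding proposition to produce the subsheaf $\OO(l)$ with locally free quotient, restrict to $U$, split the extension using the $\Ext^1$-vanishing hypothesis, and induct on the rank. You merely spell out the details (flatness of $j$, where the extension class lives, the structure of the induction) that the paper's terse proof leaves implicit.
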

\begin{proof}
 By the last proposition, there is a short exact sequence of vector bundles
\[0\to \OO_U(k) \to j^*\EE \to j^*\EE/j^*\OO(k)) \to 0. \]
As $\Ext^1(j^*\EE/j^*\OO_U(k), j^*\OO(k)) = 0$, the extension splits. The assertion follows by induction.
\end{proof}

\begin{prop}Let $R$ be a discrete valuation ring and $U \subset \PP_R(a,b)$ be a weighted affine line with $\gcd(a,b) = 1$. Then every vector bundle $\EE$ on $U$ is the restriction of a vector bundle $\FF$ on $\PP_R(a,b)$.\end{prop}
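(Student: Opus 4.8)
The plan is to produce the extension in two stages: first extend $\EE$ to a \emph{reflexive} sheaf on all of $X = \PP_R(a,b)$, and then argue that this reflexive extension is automatically locally free. For the first stage, note that $R$ is noetherian (being a discrete valuation ring), so $X$ is noetherian and the open immersion $i\co U\hookrightarrow X$ is quasi-compact; moreover $\EE$, being a vector bundle, is reflexive. Lemma \ref{Reflextension} then yields a reflexive sheaf $\FF$ on $X$ with $i^*\FF\cong\EE$. Everything therefore reduces to showing that this $\FF$ is a vector bundle.

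The main obstacle is exactly this local-freeness claim, and here the discrete valuation ring case genuinely differs from the field case of Proposition \ref{ClassField}. In the field case one may pass to the $\G_m$-equivariant description on $\A^2_K-\{0\}$, extend across the origin, and conclude because $\A^2_K$ is a regular surface, on which reflexive sheaves are locally free. Over a discrete valuation ring this fails: $\A^2_R$ has dimension $3$, and reflexivity no longer forces local freeness in dimension $3$. So I cannot simply reuse the affine presentation.

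I would get around this using the \emph{projective} cover rather than the affine one. By Proposition \ref{FinitePn} there is a finite fpqc map $\psi\co \mathbb{P}^1_R\to X$, and $\mathbb{P}^1_R$ is a regular integral scheme of dimension $2$ because $R$ is a regular local ring of dimension $1$. Since $\psi$ is flat, $\psi^*\FF$ is again reflexive by Lemma \ref{PBReflx}; being reflexive on the regular two-dimensional scheme $\mathbb{P}^1_R$, it is locally free by \cite[Corollary 1.4]{Har80}, and in particular flat. Now $\FF$ is coherent, hence finitely presented since $X$ is noetherian, and flatness descends along the faithfully flat quasi-compact map $\psi$; as a finitely presented flat sheaf is locally free of finite rank, $\FF$ is locally free. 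Its rank equals that of $\EE$ on the dense open $U$, and $X$ is connected, so the rank is constant; hence $\FF$ is a vector bundle restricting to $\EE$ on $U$, as required.

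I expect the only delicate points to be the verification that descent of local freeness (equivalently, of flatness together with finite presentation) applies to the stacky fpqc cover $\psi$, and the routine bookkeeping of regularity and dimension of $\mathbb{P}^1_R$. The hypothesis $\gcd(a,b)=1$ seems to play no essential role in this argument beyond the definition of the weighted affine line: it guarantees that the complement $X\setminus U$ is the reduced scheme $\Spec R$ rather than a $\mu_d$-gerbe, but the extension itself proceeds entirely through the reflexive hull and the cover $\psi$ and is insensitive to this.
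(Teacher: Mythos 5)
Your proof is correct, but it follows a genuinely different route from the paper. The paper constructs the extension by formal gluing: it identifies the completion of $\PP_R(a,b)$ along the vanishing locus of $\Delta$ with $\Spf R[[q]]$ (this is exactly where $\gcd(a,b)=1$ enters, via the definition of a weighted affine line), invokes the Ferrand--Raynaud-style patching result of Theorem \ref{glueing} together with Lemma \ref{AffineEquivalence} to describe a vector bundle on $\PP_R(a,b)$ as a bundle on $U$ plus a finitely generated projective $R[[q]]$-module plus a gluing isomorphism over $R((q))$, and then uses Horrocks' theorem \cite[Theorem 2]{Hor64} that projective $R((q))$-modules are free to take $P = R[[q]]^{\rk\EE}$. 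You instead extend $\EE$ to a reflexive coherent sheaf $\FF$ via Lemma \ref{Reflextension} and prove local freeness of $\FF$ by pulling back along the finite fpqc cover $\psi\co\mathbb{P}^1_R\to\PP_R(a,b)$ of Proposition \ref{FinitePn}: $\psi^*\FF$ is reflexive by Lemma \ref{PBReflx}, hence locally free on the regular $2$-dimensional scheme $\mathbb{P}^1_R$ by \cite[Corollary 1.4]{Har80}, and flatness of a coherent sheaf descends along faithfully flat maps, so $\FF$ is coherent and flat, hence locally free, of constant rank by connectedness. The two points you flag as delicate are indeed fine: fpqc descent of flatness is an elementary faithfully-flat argument that can be checked on a smooth affine presentation of the stack, and your replacement of the $3$-dimensional $\G_m$-torsor $\A^2_R-\{0\}$ (where reflexive does not imply locally free) by the $2$-dimensional cover $\mathbb{P}^1_R$ is precisely the move that makes the dimension count work. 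The trade-off: your argument is more general and more self-contained --- it needs neither $\gcd(a,b)=1$ nor the weighted-affine-line structure of $U$ (any quasi-compact open substack would do), and it bypasses both Horrocks' theorem and the completion machinery of the appendix; with it, the reduction to the case $\gcd(a,b)=1$ in the proof of the main theorem of that section could also be dispensed with. What the paper's approach buys in exchange is an explicit gluing description of \emph{all} vector bundles on $\PP_R(a,b)$ in terms of data on $U$ and on the formal disk, in the spirit of Horrocks' original argument that the paper deliberately mimics.
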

\begin{proof}Let $U$ be the non-vanishing locus of some $\Delta \in \OO(m)(\PP_R(a,b))$. Then the completion of $\PP_R(a,b)$ at the vanishing locus of $\Delta$ is equivalent to $\Spf R[[q]]$; here, $q$ corresponds to $\Delta$ under a trivialization of $\OO(m)$ on $\Spf R[[q]]$. Let $\hat{U}$ be the ''non-vanishing locus'' of $\Delta$ (or, equivalently, $q$) on this completion or, more precisely, the ringed site with same underlying site as $\PP_R(a,b)$ and sheaf of rings $\OO_{\hat{U}} = \widehat{\OO_{\PP_R(a,b)}}[\frac1\Delta]$ as in Theorem \ref{glueing}. 

By Theorem \ref{glueing} and Lemma \ref{AffineEquivalence}, the datum of a vector bundle on $\PP_R(a,b)$ is thus equivalent to that of a vector bundle $\EE$ on $U$, a finitely generated projective module $P$ on $R[[q]]$ and an isomorphism $\EE(\hat{U}) \to P[\frac1q]$. Hence, a vector bundle $\EE$ on $U$ is the restriction of a vector bundle on $\PP_R(a,b)$ if there is a finitely generated projective $R[[q]]$-module $P$ such that $\EE(\hat{U}) \cong P[\frac1q]$. But by \cite[Theorem 2]{Hor64} every projective module over $R((q))= R[[q]][\frac1q]$ is free, in particular $\EE(\hat{U})$. Thus, we can choose $P$ to be $R[[q]]^{\rk \EE}$. 
\end{proof}

 \begin{lemma}\label{gradetriv}Let $\XX \simeq \Spec A //\G_m$ for a graded ring $A$. Then $\Ext^i_{\OO_\XX}(\EE,\FF) = 0$ for $\EE$ a vector bundle, $\FF$ a quasi-coherent sheaf and $i>0$.\end{lemma}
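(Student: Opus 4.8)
The plan is to reduce everything to the elementary fact that the global sections functor on $\XX = \Spec A // \G_m$ is exact. Recall from the discussion preceding Example \ref{mmb} that, by $\G_m$-descent, the category of quasi-coherent sheaves on $\XX$ is equivalent to the category of $\Z$-graded $A$-modules, an equivalence under which $\OO_\XX$ corresponds to $A$ itself and under which a vector bundle corresponds to a finitely generated projective graded module (pull back along the faithfully flat atlas $\Spec A \to \XX$ and use that the $\G_m$-equivariant structure is exactly a grading). Under this equivalence the functor of global sections $\Gamma(\XX, -)$ is identified with the functor of $\G_m$-invariants, i.e.\ with passage to the degree-zero part $N \mapsto N_0$.

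First I would observe that $N \mapsto N_0$ is exact, since a short exact sequence of graded $A$-modules is by definition exact in each graded degree separately. Hence $\Gamma(\XX, -)$ is an exact functor on quasi-coherent sheaves; this is where the hypothesis that $\XX$ is a global quotient by the linearly reductive group $\G_m$ enters, and it is the only genuinely nontrivial input.

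Next I would use that $\EE$ is locally free of finite rank to rewrite the $\Hom$-functor. Since $\EE$ is a vector bundle its dual $\EE^\vee$ is again a vector bundle, and there is a natural isomorphism
\[\Hom_{\OO_\XX}(\EE, \FF) \cong \Gamma(\XX, \EE^\vee \tensor \FF)\]
functorial in $\FF$. Thus, as a functor of $\FF$, the functor $\Hom_{\OO_\XX}(\EE, -)$ is the composite of $\EE^\vee \tensor -$ with $\Gamma(\XX, -)$. The first of these is exact because tensoring with a locally free sheaf is exact, and the second is exact by the previous paragraph; hence the composite $\Hom_{\OO_\XX}(\EE, -)$ is exact. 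An exact additive functor has vanishing higher right derived functors: applying it to an injective resolution $\FF \to I^\bullet$ in the category of quasi-coherent sheaves yields an exact complex, so $\Ext^i_{\OO_\XX}(\EE, \FF) = H^i\bigl(\Hom_{\OO_\XX}(\EE, I^\bullet)\bigr) = 0$ for $i > 0$, as desired.

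Everything apart from the exactness of $\Gamma(\XX, -)$ is formal, so I expect the main (and essentially only) point requiring care to be that identification of global sections with the degree-zero part. As an alternative route that bypasses derived functors, one can argue directly on graded modules: a vector bundle corresponds to a finitely generated module that is projective as an ungraded $A$-module, and a standard splitting argument (lift $\id$ through a surjection from a finite graded-free module $\bigoplus_j A[n_j]$ and take the degree-zero component of the section) shows it is a direct summand of a graded-free module, hence a projective object of the category of graded $A$-modules; then $\Ext^i(\EE, -)$ vanishes for $i > 0$ for trivial reasons. The two arguments rest on the same underlying fact.
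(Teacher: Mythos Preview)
Your argument is correct and is essentially the same as the paper's: both identify $\QCoh(\XX)$ with graded $A$-modules, observe that $\Gamma(\XX,-)=\Hom_{A\grmod}(A,-)$ is exact, and then use that $\EE$ is locally free to reduce $\Ext^i(\EE,\FF)$ to cohomology of a quasi-coherent sheaf. The only cosmetic difference is that the paper phrases the last step via the degenerate local-to-global spectral sequence $H^s(\XX;\mathcal{E}xt^t(\EE,\FF))\Rightarrow \Ext^{s+t}(\EE,\FF)$, whereas you factor $\Hom(\EE,-)$ directly as $\Gamma(\XX,-)\circ(\EE^\vee\tensor-)$; these are the same computation.
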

 \begin{proof}Quasi-coherent sheaves on $\XX$ are equivalent to graded $A$-modules by Galois descent. Clearly, the functor $\Hom_{A\grmod}(A,-)$ is exact on graded $A$-modules. Thus, $H^i(\XX; \FF) = 0$ for every $i>0$ and every quasi-coherent sheaf $\FF$. But, $\Ext^i_{\OO_\XX}(\EE,\FF) \cong H^i(\XX; \mathcal{H}om_{\OO_\XX}(\EE,\FF))$ by the degenerated Grothendieck spectral sequence, where $\mathcal{H}om$ denotes the Hom-sheaf, since the Ext-sheaves $\mathcal{E}xt_{\OO_\XX}^i(\EE,\FF)$ vanish for $i>0$.\end{proof}

\begin{proof}[Proof of Theorem \ref{WPV}: ]
 We want to reduce to the case $\gcd(a,b) = 1$. For this reduction it is enough to show that if the theorem is true for $(a,b)$, then it is also true for $(la, lb)$ for $l\in\Z_{>0}$. Let $\PP_R(a,b) = (\Spec R[x_a, x_b]-\{0\})//\G_m$ and $U$ be a weighted projective line, i.e.\ the non-vanishing locus of some section $\Delta \in \Gamma(\OO(m))$. As $\Delta$ corresponds to a homogeneous polynomial of degree $>0$, $U$ can also be seen as the non-vanishing locus of $\Delta$ on $\Spec R[x_a,x_b]//\G_m$. Thus, $U\simeq \Spec R[x_a,x_b,\Delta^{-1}]//\G_m$ and $\QCoh(U) \simeq \Spec R[x_a,x_b,\Delta^{-1}]\grmodules$. The category $R[x_{la},x_{lb},\widetilde{\Delta}^{-1}]\grmodules$ (where $|x_{la}| = la$, $|x_{lb}| = lb$ and $|\widetilde{\Delta}| = l|\Delta|$) is equivalent to an $l$-fold product of $R[x_a,x_b,\Delta^{-1}]\grmodules$. The equivalence is given by sending a tuple $(M_0,\dots, M_{l-1})$ of graded $R[x_a,x_b,\Delta^{-1}]$-modules to $\bigoplus_{i=0}^{l-1} M_i[i]$. Reformulating, we get an equivalence from the $l$-fold product of $\Vect(\PP_R(a,b)-V(\Delta))$ to $\Vect(\PP_R(la,lb) - V(\widetilde{\Delta}))$ sending a tuple $(\EE_0,\dots, \EE_{l-1})$ to $\bigoplus_{i=0}^{l-1} \EE_i\tensor \OO(i)$. If the $\EE_i$ decompose all into line bundles, also $\bigoplus_{i=0}^{l-1} \EE_i\tensor \OO(i)$ does. 

Assume now $\gcd(a,b) =1$ and let $U\subset \PP_R(a,b)$ be a weighted affine line with a vector bundle $\EE$ on it. We can use the last proposition to find a vector bundle $\FF$ on $\PP_R(a,b)$ which restricts to $\EE$. Thus, $\EE$ is a sum of line bundles by Corollary \ref{VectorCor} as $U \simeq \Spec R[x_a, x_b, \Delta^{-1}]//\G_m$ and thus all Ext-groups between vector bundles vanish by Lemma \ref{gradetriv}.
\end{proof}

\section{Examples of Vector Bundles on $\MM_{(3)}$ and Their Extensions}\label{examples}
In this section, we will give a detailed exposition of some vector bundles of low rank on the moduli stack of elliptic curves at $p=3$. Our main aim will be to compute the groups of extensions between them, which will be key to the classification of iterated extensions of line bundles in the next section. 

To compute these Ext-groups we will first recall the cohomology of the moduli stack of elliptic curves. Then we will discuss two examples of indecomposable vector bundles, $E_\alpha$ and $f_*f^*\OO$. In the third subsection, we will show that $f_*f^*\OO$ is self-dual and construct two short exact sequences connecting $E_\alpha$ and $f_*f^*\OO$ by representation theoretic methods. In the fourth subsection, we will classify extensions between $E_\alpha$ and $E_\alpha \tensor \omega^j$ for arbitrary $j$.

\subsection{The Cohomology of the Moduli Stack of Elliptic Curves}\label{EllCoh}
A computation of the cohomology on $\MM_{(3)} = \MM_{\Z_{(3)}}$ can be found in slightly different language in \cite[Section 3]{Bau08}. More precisely, he calculates the cohomology of the graded Hopf algebroid \[(A = \Z_{(3)}[a_1,a_2,a_3,a_4,a_6], A[r,s,t]),\] 
where $|a_i| = i$, $|r| = 2$, $|s| = 1$ and $|t| = 3$. 
Recall that a Hopf algebroid is just a cogroupoid object in commutative rings (we give here just the rings corepresenting objects and morphisms of this groupoid and leave the structure maps implicit). A graded Hopf algebroid a cogroupoid object in (strictly) commutative graded rings. Given an algebraic stack $\XX$ and an affine morphism $q\co \Spec R \to \XX$, we get a Hopf algebroid $(R,\Gamma)$, where 
\[\Spec R\times_\XX \Spec R \simeq \Spec \Gamma.\] If $q$ is faithfully flat, we get an equivalence of categories between quasi-coherent sheaves on $\XX$ and $(R,\Gamma)$-comodules (see \cite[Section 3.4]{Nau07}). 

In the case of the moduli stack of elliptic curves, the Weierstraß form gives a fpqc map $\Spec A[\Delta^{-1}] \to \MM_{(3)}$ with 
\[\Spec A[\Delta^{-1}] \times_{\MM_{(3)}} \Spec A[\Delta^{-1}] \simeq \Spec A[\Delta^{-1}][r,s,t,u^{\pm 1}].\] 
Comodules over 
\[(A[\Delta^{-1}], A[r,s,t,u^{\pm 1}][\Delta^{-1}])\]
 are equivalent to graded comodules over 
\[(A[\Delta^{-1}], A[r,s,t][\Delta^{-1}]).\] One can check that the structure maps of this Hopf algebroid agree with the those found in \cite[Section 3]{Bau08}.

The line bundle $\omega$ on $\MM_{(3)}$ corresponds to the graded comodule $A[\Delta^{-1}]$ seen as sitting in degree $1$. The reason is essentially that $\Phi_{r,s,t,u}^*\omega_0 = u \omega_0$, where $\omega_0 = \frac{dx}{2y+a_1x+a_3}$ is the usual invariant differential over the Weierstraß elliptic curve $E$ over $\Spec A[\Delta^{-1}]$ (i.e.\ a trivialization of $\omega$) and $\Phi_{r,s,t,u}$ is the endomorphism of $E$ classified by $r,s,t,u\in A$ (as in \cite{Rez02}, Section 9.2 and Proposition 9.4). 

Bauer computes in \cite{Bau08} the cohomology of the graded Hopf algebroid $(A, A[r,s,t])$, i.e.\ the Ext-groups $\Ext^l_{(A,A[r,s,t])\grcomod}(A, A[k])$ where $A[k]_m = A_{k+m}$ denotes an index shift of $A$. This is by the previous discussion, after inverting $\Delta$, isomorphic to 
\[\Ext^l_{\OO_{\MM_{(3)}}}(\OO, \omega^k) \cong H^l(\MM_{(3)}; \omega^k).\] \vspace{0.5cm}

Summarizing his calculation, we have
\[H^1(\MM_{(3)}; \omega^i) = \begin{cases}
  \Z/3\Z  & \text{if } i \equiv 2 \mod 12,\\
  0 & \text{else, }
\end{cases}\]
\[H^2(\MM_{(3)};\omega^i) = \begin{cases}
  \Z/3\Z  & \text{if } i \equiv 6 \mod 12,\\
  0 & \text{else. }
\end{cases}\]
Chosen generators of $H^1(\MM_{(3)};\omega^2)$ and $H^2(\MM_{(3)};\omega^6)$ are denoted by $\alpha$ and $\beta$. The algebra $H^*(\MM_{(3)};\omega^*)$ is for cohomological degree $>0$ generated over $\Z/3$ by $\alpha$, $\beta$ and $\Delta^{\pm 1}$ with only relation $\alpha^2 = 0$.

\subsection{Examples of Indecomposable Vector Bundles}\label{ExIndec}
The aim of this subsection is to define vector bundles $E_\alpha$ and $f_*f^*\OO$, compute their cohomology groups and deduce that these vector bundles are indecomposable. 

For brevity, we denote the structure sheaf $\OO_{\MM_{(3)}}$ by $\OO$ and all $\Ext$-groups will be in the category of $\OO$-modules. The class $\alpha \in H^1(\MM_{(3)};\omega^2) \cong \Ext^1(\omega^{-2}, \OO)$ classifies an extension
\begin{align}\label{mother}0 \to \OO \to E_\alpha \to \omega^{-2} \to 0.\end{align}
Up to isomorphism, which is not necessarily the identity on $\omega^j$, all non-trivial extensions of two line bundles are given as 
\[0 \to \omega^j \to E_\alpha \tensor \omega^j \to \omega^{j-2} \to 0\]
since the extension classified by $-\alpha$ is isomorphic to the one classified by $\alpha$ (via multiplication by $-1$).

We now want to compute some further Ext-groups.
\begin{prop}\label{CohEa}We have \[\Ext^1(\omega^j, E_\alpha) = \begin{cases}
  \Z/3\Z  & \text{if } j \equiv -4 \mod 12,\\
  0 & \text{else, }
\end{cases}\]
\[\Ext^2(\omega^j, E_\alpha) = \begin{cases}
  \Z/3\Z  & \text{if } j \equiv -6 \mod 12,\\
  0 & \text{else. }
\end{cases}\]

Furthermore, left multiplication with $\beta$ defines isomorphisms $\Ext^{i}(\omega^j, E_\alpha) \cong \Ext^{i+2}(\omega^j, E_\alpha)$. 

We denote that generator of the $\Ext^1$-group by $\at$ that maps to $\alpha$ under the map 
\[\Ext^1(\omega^{-4}, E_\alpha)\cong \Ext^1(\omega^{-2}, \omega^2\tensor E_\alpha) \to \Ext^1(\omega^{-2}, \OO)\]
induced by $E_\alpha\to\omega^{-2}$ as in (\ref{mother}). \end{prop}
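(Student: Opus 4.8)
The plan is to compute the Ext-groups $\Ext^i(\omega^j, E_\alpha)$ by feeding the defining extension (\ref{mother}) into a long exact sequence and reading off the answer from Bauer's cohomology computation recalled in Subsection \ref{EllCoh}. Concretely, I would first twist the sequence (\ref{mother}) by $\omega^{-j}$ (equivalently, apply $\Ext^*(\omega^j, -)$ to the untwisted sequence) to obtain the long exact sequence
\[
\cdots \to \Ext^i(\omega^j, \OO) \to \Ext^i(\omega^j, E_\alpha) \to \Ext^i(\omega^j, \omega^{-2}) \xrightarrow{\partial} \Ext^{i+1}(\omega^j, \OO) \to \cdots
\]
Since $\Ext^i(\omega^j, \omega^k) \cong H^i(\MM_{(3)}; \omega^{k-j})$, the two flanking terms are just $H^i(\MM_{(3)};\omega^{-j})$ and $H^i(\MM_{(3)};\omega^{-2-j})$, both of which are explicitly known. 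The crucial point is to identify the connecting homomorphism $\partial$: by construction of the extension (\ref{mother}) as the class $\alpha \in \Ext^1(\omega^{-2},\OO) = H^1(\MM_{(3)};\omega^2)$, the map $\partial$ is precisely multiplication by $\alpha$ in the ring $H^*(\MM_{(3)};\omega^*)$. So the whole computation reduces to understanding the maps $\alpha\cdot\colon H^i(\MM_{(3)};\omega^{-2-j}) \to H^{i+1}(\MM_{(3)};\omega^{-j})$.

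This is where the explicit ring structure does all the work. I would run through the relevant congruence classes of $j$ modulo $12$ and use the presentation of $H^*(\MM_{(3)};\omega^*)$ as generated in positive degrees by $\alpha$, $\beta$, $\Delta^{\pm1}$ with $\alpha^2 = 0$. The claimed answer, $\Ext^1(\omega^j, E_\alpha) = \Z/3$ iff $j \equiv -4 \bmod 12$ and $\Ext^2(\omega^j, E_\alpha) = \Z/3$ iff $j\equiv -6 \bmod 12$, should emerge because in exactly those residue classes one of the two flanking cohomology groups is nonzero while the relevant multiplication-by-$\alpha$ map either vanishes or fails to be surjective/injective, leaving the $\Ext$-group to be the kernel or cokernel contribution. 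For instance, when $j\equiv -4$, the term $\Ext^1(\omega^j,\omega^{-2}) = H^1(\omega^{2})=\Z/3$ survives because $\partial$ lands in $H^2(\omega^4)=0$; one must also check the incoming $H^1(\omega^4)=0$ contributes nothing. The relation $\alpha^2=0$ is what guarantees that multiplying the generator of $H^1(\omega^2)$ by $\alpha$ gives zero, which is essential for the surviving class in the $j\equiv -4$ case.

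The statement about left multiplication by $\beta$ inducing isomorphisms $\Ext^i(\omega^j, E_\alpha)\cong \Ext^{i+2}(\omega^j, E_\alpha)$ I would obtain by naturality: $\beta$ acts compatibly on the entire long exact sequence, and since it induces isomorphisms $H^i(\omega^k)\cong H^{i+2}(\omega^{k+6})$ on the flanking terms (visible directly from Bauer's answer, where the pattern is $6$-periodic in the weight and $2$-periodic in cohomological degree via $\beta$), the five lemma gives the claimed isomorphism on the middle terms. Finally, the last sentence of the proposition is a definition rather than an assertion needing proof: once $\Ext^1(\omega^{-4},E_\alpha)\cong\Z/3$ is established, I would simply note that the map to $\Ext^1(\omega^{-2},\OO)=\Z/3$ induced by $E_\alpha\to\omega^{-2}$ is (via the long exact sequence above) the identification of that surviving term, hence an isomorphism, and name $\at$ as the preimage of $\alpha$.

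The main obstacle I anticipate is bookkeeping the connecting maps correctly rather than any deep difficulty: one must be careful to verify that $\partial$ is genuinely multiplication by $\alpha$ (and not by a unit multiple that could change nothing, or worse, be misidentified) and to handle the boundary residue classes where both flanking groups could a priori be nonzero simultaneously. The cleanest way to pin down $\partial$ is to recall that the extension class of (\ref{mother}) is $\alpha$ by definition, so that the boundary map in $\Ext^*(\omega^j,-)$ applied to (\ref{mother}) is cup product with $\alpha$; this is the standard interpretation of the Yoneda product, and I would invoke it directly rather than recompute it.
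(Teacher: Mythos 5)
Your proposal matches the paper's proof essentially step for step: the paper applies $\Ext^*(\omega^j,-)$ to the extension (\ref{mother}), identifies the connecting homomorphisms as right Yoneda multiplication by $\alpha$ via Mac Lane's lemma (quoted there as Lemma \ref{extmult}), and reads off the groups from Bauer's computation using $\alpha^2=0$ ($\delta_0$ surjective, $\delta_{\mathrm{odd}}=0$, $\delta_{2k}$ an isomorphism for $k>0$), exactly as you propose. The only cosmetic difference is the $\beta$-periodicity step: the paper observes that its resulting isomorphisms $\Ext^{2k-1}(\omega^j,E_\alpha)\cong\Ext^{2k-1}(\omega^j,\omega^{-2})$ and $\Ext^{2k}(\omega^j,E_\alpha)\cong\Ext^{2k}(\omega^j,\OO)$ commute with left multiplication by $\beta$ instead of invoking the five lemma; if you do use the five lemma, note that $\beta\cdot\colon H^0\to H^2$ is only surjective (not injective), so at the degree-one stage you need the sharp form of the five lemma rather than isomorphisms at all four flanking positions.
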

\begin{proof}
 We have an exact sequence

\begin{center}\begin{tikzpicture}[>=angle 90]
\matrix[matrix of math nodes,row sep=3em, column sep=3em,
text height=1.5ex, text depth=0.25ex]
{&&|[name=0o2]|\Hom(\omega^j, \omega^{-2})\\
|[name=1o0]| \Ext^1(\omega^j, \OO)&|[name=1Ea]| \Ext^1(\omega^j, E_\alpha) &|[name=1o2]| \Ext^1(\omega^j, \omega^{-2})\\
|[name=2o0]| \Ext^2(\omega^j, \OO) &|[name=2Ea]| \Ext^2(\omega^j, E_\alpha) &|[name=2o2]| \Ext^2(\omega^j, \omega^{-2})\\
|[name=3o0]|\Ext^3(\omega^j, \OO) & |[name=dots]| \cdots &\\};
\draw[->,font=\scriptsize]
          (1o0) edge (1Ea)
          (1Ea) edge (1o2)
          (2o0) edge (2Ea)
          (2Ea) edge (2o2)
	  (3o0) edge (dots)
          (0o2) edge[out=-5,in=175] node[above left] {$\delta_0$} (1o0)
          (1o2) edge[out=-5,in=175] node[above left] {$\delta_1$} (2o0)
	  (2o2) edge[out=-5,in=175] node[above left] {$\delta_2$} (3o0);
\end{tikzpicture}\end{center}

To handle this, we need the following lemma:
\begin{lemma}[\cite{ML63}, II.9.1]\label{extmult}Let \[0\to A\to B\to C\to 0\] be an extension in an abelian category $\mathcal{A}$ (with enough injectives or projectives), corresponding to an Ext-class $x\in \Ext^1(C,A)$, and $T$ an arbitrary object in $\Ac$. The boundary map $\Ext^k(T, C) \to \Ext^{k+1}(T,A)$ of the long exact sequence for Ext-groups out of $T$ equals right multiplication by $x$. Similarly, the boundary map $\Ext^k(A, T) \to \Ext^{k+1}(C,T)$ of the sequence for Ext-groups into $T$ equals left multiplication by $x$. \end{lemma}

The map $\delta_0$ is therefore surjective, $\delta_{2k-1}$ is zero (since $(\beta^k\alpha\Delta^l)\cdot \alpha =0$) and $\delta_{2k}$ is an isomorphism for $k>0$. Hence, we get isomorphisms $\Ext^{2k-1}(\omega^j,E_\alpha) \cong \Ext^{2k-1}(\omega^j, \omega^{-2})$ and $\Ext^{2k}(\omega^j, E_\alpha) \cong \Ext^{2k}(\omega^j, \OO)$ for $k>0$. These isomorphisms commute with left multiplication by $\beta$ (as these isomorphisms are induced by post composing with a map of sheaves). This results in the Ext-groups given in the proposition. \end{proof}

\begin{cor}The vector bundle $E_\alpha$ is indecomposable.\end{cor}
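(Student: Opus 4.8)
The plan is to show that $E_\alpha$ admits no nontrivial direct-sum decomposition by exploiting its global endomorphism ring. Suppose for contradiction that $E_\alpha \cong \mathcal{F}_1 \oplus \mathcal{F}_2$ with both summands nonzero vector bundles. Since $E_\alpha$ has rank $2$, each $\mathcal{F}_i$ must have rank $1$, i.e.\ each is a line bundle. By Theorem \ref{ClassLine} applied to the regular local ring $R = \Z_{(3)}$ (more precisely its corollary, giving $\Pic(\MM_{(3)}) \cong \Z/12$), every line bundle on $\MM_{(3)}$ is of the form $\omega^a$. Hence a splitting would force $E_\alpha \cong \omega^a \oplus \omega^b$ for some $a,b \in \Z/12$.

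The key computation is then to rule this out. First I would pin down $a$ and $b$ using the short exact sequence (\ref{mother}). Applying $\Hom(\OO,-)$ and $\Hom(\omega^{-2},-)$ to the putative splitting and comparing with the sequence shows that on the associated graded (or equivalently after reduction modulo the maximal ideal, using Proposition \ref{ClassField} over $\F_3$) the line bundles occurring are exactly $\OO = \omega^0$ and $\omega^{-2}$, so necessarily $\{a,b\} = \{0,-2\}$ as classes in $\Z/12$. It therefore suffices to show that the extension $\OO \to E_\alpha \to \omega^{-2}$ does not split. But the extension class is precisely the chosen generator $\alpha \in \Ext^1(\omega^{-2},\OO) \cong H^1(\MM_{(3)};\omega^2) \cong \Z/3$, which is nonzero. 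A split sequence would have zero extension class, giving the contradiction.

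An alternative, and perhaps cleaner, route uses the endomorphism computation directly. If $E_\alpha$ were decomposable into line bundles $\omega^a \oplus \omega^b$ with $a \neq b$, then $\End(E_\alpha) = \Hom(E_\alpha, E_\alpha)$ would contain the two orthogonal idempotent projections, and in particular $\Hom(\omega^a,\omega^b) \oplus \Hom(\omega^b,\omega^a)$ would be reflected in the off-diagonal Hom-groups. One computes $\Hom(E_\alpha, E_\alpha)$ from the long exact sequence obtained by applying $\Hom(-,E_\alpha)$ to (\ref{mother}): combining $\Hom(\omega^{-2},E_\alpha)$ and $\Hom(\OO,E_\alpha)$ with the cohomology groups of Section \ref{EllCoh} shows that $\End(E_\alpha)$ is local (its only idempotents are $0$ and $1$), which is exactly the statement that $E_\alpha$ is indecomposable. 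The two approaches overlap: both reduce to the nonvanishing of $\alpha$.

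The main obstacle is the step forcing a rank-$2$ summand-free decomposition to consist of line bundles of the specific weights $\{0,-2\}$; one must argue that a direct-sum decomposition of $E_\alpha$ respects the filtration of (\ref{mother}) up to the identification of graded pieces, rather than producing some a priori unrelated pair $\omega^a \oplus \omega^b$. I would handle this by reducing modulo $3$ and invoking Proposition \ref{ClassField}, which classifies the decomposition of $E_\alpha \tensor \F_3$ on $\MM_{\F_3} \simeq \PP_{\F_3}(4,6)$ into line bundles uniquely; the weights there are determined by the cohomology groups computed in Proposition \ref{CohEa}, and any integral splitting would have to be compatible with this mod-$3$ decomposition. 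Once the weights are fixed, the contradiction is immediate from $\alpha \neq 0$.
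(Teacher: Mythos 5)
Your opening reduction is fine: a decomposable rank-$2$ bundle must split as a sum of two line bundles, and by the classification of line bundles this means $E_\alpha \cong \omega^a \oplus \omega^b$. But both routes you propose from there have genuine gaps. The step where you pin down $\{a,b\} = \{0,-2\}$ rests on the identification $\MM_{\F_3} \simeq \PP_{\F_3}(4,6)$, which is false: the equivalence of Example \ref{mmb} requires $6$ to be invertible, and in characteristic $3$ the moduli stack is not (an open substack of) a weighted projective line at all -- the supersingular point has a nonabelian automorphism group of order $12$, whereas points of $\PP(a,b)$ have stabilizers $\mu_d$. Consequently Proposition \ref{ClassField} does not apply to $\MM_{\F_3}$, and its conclusion actually fails there: the class $\alpha$ is $3$-torsion and survives reduction mod $3$, so $E_\alpha \tensor \F_3$ is itself not a sum of line bundles on $\MM_{\F_3}$. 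The failure of such splittings in characteristics $2$ and $3$ is precisely the subject of the paper, so a mod-$3$ reduction cannot do the work you want it to do here.

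The second gap is the final step: $\alpha \neq 0$ shows that the sequence (\ref{mother}) admits no splitting map, but that does not by itself rule out an \emph{abstract} isomorphism $E_\alpha \cong \OO \oplus \omega^{-2}$; in general a non-split extension can have middle term isomorphic to the direct sum of its ends, and excluding this requires an additional argument (a Miyata-type theorem for coherent sheaves, or a direct cohomological comparison). Your endomorphism-ring alternative has the same problem in disguise: ``$\End(E_\alpha)$ has only trivial idempotents'' is literally equivalent to indecomposability, and you assert the computation rather than carry it out -- note that $\End(E_\alpha)$ contains $H^0(\MM_{(3)};\OO)\cdot\id \cong \Z_{(3)}[j]$ and is an infinite-rank $\Z_{(3)}$-module, so it is in particular not a local ring, and analyzing its idempotents is not a two-line exercise. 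The paper's proof sidesteps both issues by using Proposition \ref{CohEa} directly: if $E_\alpha \cong \omega^k \oplus \omega^l$, then $\Ext^1(\omega^j, E_\alpha) \cong H^1(\MM_{(3)};\omega^{k-j}) \oplus H^1(\MM_{(3)};\omega^{l-j})$ is nonzero for $j \equiv k-2$ and $j \equiv l-2 \mod 12$, and equals $(\Z/3)^2$ when these residues coincide; either case contradicts Proposition \ref{CohEa}, which gives a single $\Z/3$ concentrated in $j \equiv -4 \mod 12$. That argument needs neither the location of the weights nor any discussion of split versus decomposable, and is the repair I would recommend.
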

\begin{proof}If $E_\alpha \cong \omega^k\oplus \omega^l$, then $\Ext^1(\omega^j, E_\alpha) \neq 0$ for $j \equiv k-2 \mod 12$ and $j \equiv l-2 \mod 12$. If $k \not\equiv l \mod 12$, this is a contradiction to the last proposition. If $k\equiv l \mod 12$, then $\Ext^1(\omega^{k-2}, E_\alpha) \cong (\Z/3)^2$, which is also a contradiction to the last proposition.\end{proof}

By dualizing the defining extension of $E_\alpha$ and tensoring with $\omega^{-2}$, we get an extension
\[0\to \OO \to \check{E_\alpha}\otimes\omega^{-2} \to \omega^{-2} \to 0,\] where $\check{E_\alpha}$ denotes the dual of $E_\alpha$ -- note that dualizing is here exact since $\omega^{-2}$ is a vector bundle. This extension is non-split (else the dual sequence would split as well) and therefore we have the following:
\begin{lemma}\label{Eadual}We have an isomorphism $\check{E_\alpha} \cong E_\alpha\tensor \omega^2$\end{lemma}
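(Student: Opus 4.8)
The plan is to read off the result from the defining extension of $E_\alpha$ together with the one-dimensionality of the relevant $\Ext$-group. Start from the sequence (\ref{mother}),
\[0 \to \OO \to E_\alpha \to \omega^{-2} \to 0,\]
and dualize it. Since all three terms are vector bundles, the functor $(-)^\vee = \mathcal{H}om(-,\OO)$ is exact on this sequence, so I obtain
\[0 \to \omega^2 \to \check{E_\alpha} \to \OO \to 0.\]
Tensoring with the line bundle $\omega^{-2}$ (again exact) produces the sequence already displayed just before the lemma,
\[0 \to \OO \to \check{E_\alpha}\tensor\omega^{-2} \to \omega^{-2} \to 0,\]
which I regard as an element of $\Ext^1(\omega^{-2},\OO)\cong H^1(\MM_{(3)};\omega^2)$.

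Next I would argue that this extension is non-split: a splitting would dualize (and then tensor back) to a splitting of (\ref{mother}), contradicting the non-triviality of $\alpha$. This is exactly the parenthetical remark preceding the lemma, and it is the one genuinely geometric input. Everything else is then forced by the cohomology computation of Subsection \ref{EllCoh}: the group $H^1(\MM_{(3)};\omega^2)$ is cyclic of order three, so its only non-zero elements are $\alpha$ and $-\alpha$. Hence the extension class of $\check{E_\alpha}\tensor\omega^{-2}$ is either $\alpha$ or $-\alpha$.

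To finish I would invoke the observation recorded after (\ref{mother}) that the extension classified by $-\alpha$ is isomorphic to the one classified by $\alpha$, the isomorphism being induced by multiplication by $-1$ on the sub- and quotient line bundles. In either case the middle terms are abstractly isomorphic, so $\check{E_\alpha}\tensor\omega^{-2}\cong E_\alpha$. Tensoring this isomorphism with $\omega^2$ yields $\check{E_\alpha}\cong E_\alpha\tensor\omega^2$, as claimed.

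The only point requiring care is the bookkeeping of the identification $\check{E_\alpha}\tensor\omega^{-2}\cong E_\alpha$ rather than $\check{E_\alpha}\tensor\omega^{-2}\cong E_\alpha$ only up to an unspecified sign-twist; but because an abstract isomorphism of the middle terms is all that is asserted, the $\pm\alpha$ ambiguity is harmless and no separate obstacle arises. In short, the entire lemma reduces to the non-splitting of the dualized-and-twisted sequence plus the fact that $\Ext^1(\omega^{-2},\OO)\cong\Z/3$, both of which are already available.
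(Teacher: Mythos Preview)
Your proposal is correct and follows essentially the same route as the paper: dualize the defining extension, tensor with $\omega^{-2}$, observe non-splitting (since a splitting would dualize back), and then use that any non-split extension of $\omega^{-2}$ by $\OO$ has middle term isomorphic to $E_\alpha$. The paper's proof is the paragraph immediately preceding the lemma; you have simply spelled out the step through $\Ext^1(\omega^{-2},\OO)\cong\Z/3$ and the $\pm\alpha$ identification more explicitly than the paper does.
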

 Now consider the following lemma:

\begin{lemma}\label{ExtAdjunction}Let $(\XX, \OO)$ be a ringed site, $\EE$ and $\FF$ be vector bundles and $\GG$ be a quasi-coherent sheaf on $\XX$. Then we have $\Ext^i(\EE, \FF\tensor \GG) \cong \Ext^i(\EE\tensor \check{\FF}, \GG)$.\end{lemma}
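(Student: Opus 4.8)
The plan is to prove the case $i=0$ as an instance of the tensor--hom adjunction for the dualizable object $\FF$, and then bootstrap to all $i$ by computing both Ext-groups from one and the same injective resolution of $\GG$. First I would record the natural isomorphism
\[\Hom(\EE,\FF\tensor\JJ)\;\cong\;\Hom(\EE\tensor\check{\FF},\JJ),\]
natural in the $\OO$-module $\JJ$. Since $\FF$ is a vector bundle it is dualizable, so there are evaluation and coevaluation maps $\ev\co\FF\tensor\check{\FF}\to\OO$ and $\mathrm{coev}\co\OO\to\check{\FF}\tensor\FF$ satisfying the triangle identities. The forward map sends $\varphi\co\EE\to\FF\tensor\JJ$ to the composite $\EE\tensor\check{\FF}\xrightarrow{\varphi\tensor\id}\FF\tensor\JJ\tensor\check{\FF}\xrightarrow{\ev}\JJ$, and the coevaluation produces a two-sided inverse via the triangle identities. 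As dualizability, $\ev$ and $\mathrm{coev}$ are all defined locally, this isomorphism is valid on any ringed site, and it requires nothing of $\EE$.

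The crucial technical step is that the functor $\FF\tensor(-)$ preserves injective $\OO$-modules. Setting $\EE=A$ and $\JJ=B$ in the isomorphism above and using commutativity of the tensor product gives $\Hom(\check{\FF}\tensor A,B)\cong\Hom(A,\FF\tensor B)$, i.e.\ $\check{\FF}\tensor(-)$ is left adjoint to $\FF\tensor(-)$. Because $\check{\FF}$ is locally free, hence flat, the functor $\check{\FF}\tensor(-)$ is exact, and a right adjoint of an exact functor preserves injectives; thus $\FF\tensor(-)$ preserves injectives.

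Now choose an injective resolution $\GG\to I^\bullet$ in the category of $\OO$-modules, which has enough injectives since $\XX$ is a ringed site. By the previous step each $\FF\tensor I^n$ is injective, and since $\FF$ is locally free the functor $\FF\tensor(-)$ is exact, so $\FF\tensor\GG\to\FF\tensor I^\bullet$ is again an injective resolution. Computing $\Ext$ via these resolutions and applying the $i=0$ isomorphism degreewise, which is compatible with the differentials by naturality, yields
\[\Ext^i(\EE,\FF\tensor\GG)=H^i\Hom(\EE,\FF\tensor I^\bullet)\cong H^i\Hom(\EE\tensor\check{\FF},I^\bullet)=\Ext^i(\EE\tensor\check{\FF},\GG).\]
I expect the main obstacle to be the preservation of injectives on a general ringed site; this is exactly what the adjunction $\check{\FF}\tensor(-)\dashv\FF\tensor(-)$ delivers, so once that adjunction is extracted from the $i=0$ dualizability isomorphism the remainder is formal. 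I would also note that the hypothesis that $\EE$ be a vector bundle is not used: only dualizability of $\FF$ and exactness of $\FF\tensor(-)$ enter, so it is harmless.
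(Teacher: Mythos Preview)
Your proof is correct but follows a different route than the paper. The paper works at the sheaf level: dualizability of $\FF$ gives a natural isomorphism of sheaf-Homs $\mathcal{H}om(\EE,\FF\tensor\GG)\cong\mathcal{H}om(\EE\tensor\check{\FF},\GG)$, and since $\EE$ and $\EE\tensor\check{\FF}$ are vector bundles all higher $\mathcal{E}xt$-sheaves vanish; the local-to-global (Grothendieck) spectral sequence then degenerates to $\Ext^i(-,-)\cong H^i(\XX;\mathcal{H}om(-,-))$, and the two sheaf-cohomology groups coincide. You instead stay with global Hom and injective resolutions: from the dualizability isomorphism you extract the adjunction $\check{\FF}\tensor(-)\dashv\FF\tensor(-)$, use exactness of the left adjoint to conclude that $\FF\tensor(-)$ preserves injectives, and then compute both Ext-groups from the same injective resolution of $\GG$. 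Your argument is a bit more elementary (no spectral sequence) and, as you observe, does not actually use that $\EE$ is a vector bundle; the paper's argument needs this hypothesis to make the $\mathcal{E}xt$-sheaves vanish, which is how it collapses the spectral sequence. Conversely, the paper's approach dovetails with its repeated use of the same spectral-sequence trick elsewhere (e.g.\ in the analysis of $\Ext^i(\EE,f_*f^*\OO)$), so it fits the surrounding narrative.
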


\begin{proof}Since vector bundles are (strongly) dualizable, we have a natural isomorphism \[\mathcal{H}om_{\OO}(\EE, \FF\tensor \GG) \cong \mathcal{H}om_{\OO}(\EE \tensor \check{\FF},\GG)\] of Hom-sheaves. The same holds for all higher Ext-sheaves (they are all zero).
Therefore, \[\Ext^i(\EE,\FF\tensor\GG)\cong H^i(\XX; \mathcal{H}om_{\OO}(\EE,\FF\tensor\GG))\cong H^i(\XX;\mathcal{H}om_{\OO}(\EE \tensor \check{\FF},\GG)) \cong \Ext^i(\EE\tensor\check{\FF},\GG)\] by the Grothendieck spectral sequence converging from the cohomology of the Ext-sheaves to the Ext-groups.
\end{proof}

In particular, we have \[\Ext^i(E_\alpha\tensor\omega^j,\OO) \cong \Ext^i(\omega^j, \check{E_\alpha}) \cong \Ext^i(\omega^{j-2}, E_\alpha).\] This vanishes for $i=1$ iff  $j\not\equiv -2 \mod 12$. 

\begin{remark}One possibility to construct $E_\alpha$ (or rather $\check{E_\alpha})$ concretely is the following: The reason for $E_\alpha$ not to split is the same as for the Hasse invariant not to lift, namely the non-vanishing of $H^1(\MM_{(3)}; \omega^2)$. At the prime $3$, the Hasse invariant is a section $A\in H^0(\MM_{(3)}; \omega^2/3)$, i.e.\ a morphism $\OO \to \omega^2/3$. Define a vector bundle $E$ as the fiber product $\OO \times_{\omega^2/3} \omega^2$. Since $\omega^2\to \omega^2/3$ is surjective, $E \to \OO$ is surjective and non-split (since a splitting corresponded to a lifting of the Hasse invariant). The kernel is given by the map $\omega^2 \to E$ inducing $\omega^2\xrightarrow{3} \omega^2$ and $\omega^2 \xrightarrow{0} \OO$. Thus, we get a non-split extension
\[0\to\omega^2 \to E \to \OO \to 0.\]
Thus, $E \cong E_\alpha \tensor \omega^2 \cong \check{E_\alpha}$ by Lemma \ref{Eadual}.\end{remark}

\vspace{0.5cm}

A further, particularly important example of a vector bundle is the following: Let \[f\co \MM_0(2)_{(3)} \to \MM_{(3)}\] be the usual projection map from the moduli stack of elliptic curves with chosen point of exact order $2$. Then $f_*f^*\omega^j = f_*f^*\OO \tensor \omega^j$ defines a family of rank $3$ vector bundles on $\MM_{(3)}$. 

\begin{lemma}\label{VanishingofCohomology}The cohomology groups $H^i(\MM_{(3)}; f_*\FF)$ vanish for $i>0$ for every quasi-coherent sheaf $\FF$ on $\MM_0(2)$.\end{lemma}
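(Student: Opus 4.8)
The plan is to reduce the computation on $\MM_{(3)}$ to a cohomology computation on $\MM_0(2)_{(3)}$ itself, and then to recognize the latter as a $\G_m$-quotient of an affine scheme so that Lemma \ref{gradetriv} applies.

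First I would observe that $f\co \MM_0(2)_{(3)} \to \MM_{(3)}$ is a finite morphism: over $\Z_{(3)}$ the $2$-torsion is étale, so the points of exact order $2$ form a finite étale scheme of degree $3$ over $\MM_{(3)}$, and $f$ is finite étale of degree $3$. In particular $f$ is affine, so the higher direct images $R^jf_*\FF$ vanish for $j>0$ and every quasi-coherent $\FF$. Consequently the Leray (Grothendieck) spectral sequence
\[H^p(\MM_{(3)}; R^qf_*\FF) \Rightarrow H^{p+q}(\MM_0(2)_{(3)};\FF)\]
degenerates to an isomorphism $H^i(\MM_{(3)}; f_*\FF) \cong H^i(\MM_0(2)_{(3)};\FF)$. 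It therefore suffices to show that $H^i(\MM_0(2)_{(3)};\FF) = 0$ for all $i>0$ and all quasi-coherent $\FF$.

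To prove this vanishing, I would exhibit $\MM_0(2)_{(3)}$ as a quotient of the form $\Spec A //\G_m$ with $A$ a graded $\Z_{(3)}$-algebra. Since $2$ is invertible, an elliptic curve with a chosen point $P$ of exact order $2$ may be put, after translating $P$ to the origin, in the form $y^2 = x(x^2 + a_2 x + a_4)$, which is smooth precisely when $\Delta = 16 a_4^2(a_2^2 - 4a_4)$ is invertible. The residual coordinate changes $x \mapsto u^2 x$, $y\mapsto u^3 y$ scale $a_2$ and $a_4$, inducing a grading with $|a_2| = 2$ and $|a_4| = 4$, so that
\[\MM_0(2)_{(3)} \simeq \left(\Spec \Z_{(3)}[a_2,a_4][\Delta^{-1}]\right)//\G_m.\]
Lemma \ref{gradetriv} (more precisely the step in its proof showing $H^i(\XX;\FF)=0$ for $i>0$ whenever $\XX \simeq \Spec A//\G_m$ for a graded ring $A$) then yields the desired vanishing. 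Note that $\gcd(2,4)=2$, so the generic automorphism group is $\mu_2$, consistent with Remark \ref{genericautomorphism} and the $[-1]$-automorphism of a marked elliptic curve.

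The main obstacle is the explicit presentation of $\MM_0(2)_{(3)}$ as a $\G_m$-quotient of an affine scheme: one must verify that the Weierstrass model above really represents the moduli problem of elliptic curves equipped with a point of exact order $2$ over $\Z_{(3)}$-algebras, and that the scaling $\G_m$-action accounts for all isomorphisms of such data, including the behavior at the stacky points. Once this presentation is in hand, both the affineness (hence the exactness and cohomological triviality of $f_*$) and the vanishing of higher cohomology are formal, the latter being a direct instance of Lemma \ref{gradetriv}.
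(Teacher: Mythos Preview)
Your proposal is correct and follows essentially the same route as the paper: use affineness of $f$ to reduce via the degenerate Leray spectral sequence to cohomology on $\MM_0(2)_{(3)}$, then invoke the presentation of $\MM_0(2)_{(3)}$ as a $\G_m$-quotient of an affine scheme to apply Lemma \ref{gradetriv}. The only cosmetic difference is that the paper cites the presentation from \cite[Section 1.3.2]{Beh06} with variables $b_2,b_4$, whereas you derive it directly from the Weierstrass form with variables $a_2,a_4$.
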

\begin{proof}The map $f$ is finite and, in particular, affine. Therefore, all higher direct images $R^if_*$ vanish and, using a degenerate form of the Leray spectral sequence, we get
\[H^i(\MM_{(3)}; f_*\FF) \cong H^i(\MM_0(2)_{(3)}; \FF).\]
 The latter vanishes by Lemma \ref{gradetriv} since $\MM_0(2)_{(3)} \simeq \Spec \Z_{(3)}[b_2,b_4,\Delta^{-1}]//\G_m$ by \cite[Section 1.3.2]{Beh06}. \end{proof}
\begin{cor}The vector bundle $f_*f^*\OO$ is indecomposable.\end{cor}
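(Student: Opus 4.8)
The plan is to mimic the indecomposability argument used above for $E_\alpha$: I will isolate a cohomological property shared by $f_*f^*\OO$ and all of its $\omega$-twists but violated by every line bundle, and then exploit the arithmetic of rank $3$ to reduce the whole question to the line-bundle case.

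First I would record the key vanishing. Since $f_*f^*\OO\tensor\omega^j = f_*f^*\omega^j = f_*(f^*\omega^j)$ and $f^*\omega^j$ is quasi-coherent on $\MM_0(2)_{(3)}$, Lemma \ref{VanishingofCohomology} gives
\[ H^i(\MM_{(3)}; f_*f^*\OO \tensor \omega^j) = 0 \qquad \text{for all } i>0 \text{ and all } j\in\Z. \]
In words, every $\omega$-twist of $f_*f^*\OO$ has vanishing higher cohomology; and since cohomology commutes with finite direct sums, the same holds for every direct summand of $f_*f^*\OO$.

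Next I would examine a hypothetical splitting $f_*f^*\OO \cong \EE_1 \oplus \EE_2$ with both summands nonzero. Because $f_*f^*\OO$ has rank $3$ and the only partition of $3$ into two positive parts is $3 = 2+1$, one summand, say $\EE_1 = \LL$, is forced to be a line bundle. This is the crucial simplification: I never need to analyze the possible rank-$2$ summand, only to observe that a line bundle must appear. By the classification of line bundles (Theorem \ref{ClassLine} together with its corollary, using that $\Z_{(3)}$ is a regular local ring), $\Pic(\MM_{(3)}) \cong \Z/12$ is generated by $\omega$, so $\LL \cong \omega^k$ for some $k$. Then $\omega^{k+j}$ is a direct summand of $f_*f^*\OO\tensor\omega^j$ for every $j$, whence $H^1(\MM_{(3)}; \omega^{k+j}) = 0$ for all $j$ by the first step. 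But choosing $j$ with $k+j \equiv 2 \pmod{12}$ yields $H^1(\MM_{(3)}; \omega^{k+j}) \cong \Z/3 \neq 0$ by the computation recalled in Subsection \ref{EllCoh} — a contradiction. Hence $f_*f^*\OO$ is indecomposable.

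I expect the only genuine subtlety to be this reduction step rather than any computation: the point is to realize that the rank-$3$ numerology forces every nontrivial splitting to contain a line-bundle summand, so that the available classification of $\Pic(\MM_{(3)})$ closes the argument and no independent study of rank-$2$ bundles is required. The cohomological input is then entirely supplied by Lemma \ref{VanishingofCohomology} and the known nonvanishing of $H^1(\MM_{(3)};\omega^2)$.
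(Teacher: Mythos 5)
Your proposal is correct and follows exactly the paper's own argument: a nontrivial splitting of a rank-$3$ bundle forces a line-bundle summand $\LL \cong \omega^k$ (by the classification of line bundles over the regular local ring $\Z_{(3)}$), and then the nonvanishing of $H^1(\MM_{(3)};\omega^{k+j})$ for suitable $j$ contradicts the vanishing of $H^1(\MM_{(3)};f_*f^*\omega^j)$ supplied by Lemma \ref{VanishingofCohomology}. Your write-up is just a more explicit version of the same proof, with the rank numerology and the choice of twist $j$ spelled out.
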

\begin{proof}If it is not indecomposable, it is a sum of a line $\LL$ and a plane bundle $\EE$. By the classification of line bundles and Section \ref{EllCoh}, there exists some $j\in\Z$ such that $\LL \tensor \omega^j$ has non-trivial first cohomology, but $f_*f^*\OO \tensor \omega^j \cong f_*f^*\omega^j$ has trivial first cohomology.\end{proof} 

Note that the last lemma implies $\Ext^i(\EE, f_*f^*\OO) = 0$ for all $i>0$ for every vector bundle $\EE$. Indeed, we have a (Grothendieck) spectral sequence
\[H^s(\mathcal{E}xt^t_{\OO}(\EE, f_*f^*\OO)) \Rightarrow \Ext^{s+t}(\EE, f_*f^*\OO).\]
Since $\EE$ is a vector bundle, all Ext-sheaves for $t>0$ vanish. Furthermore, $\mathcal{H}om(\EE, f_*f^*\OO) \cong f_*\mathcal{H}om_{\OO_{\MM_0(2)_{(3)}}}(f^*\EE, \OO_{\MM_0(2)_{(3)}})$. Thus its cohomology groups vanish for $s>0$ by the last lemma and the spectral sequence is concentrated in $s= t= 0$. Likewise $\Ext^i(f_*f^*\OO, \EE) = 0$ for all $i>0$ by Lemma \ref{ExtAdjunction} since $f_*f^*\OO$ is self-dual by Lemma \ref{RepLem2} in the next subsection. To show this lemma (and more), it will be convenient to do a short excursion to representation-theoretic methods. But first, let us summarize the preceeding discussion:
\begin{prop}\label{ExtEaa}Every extension of vector bundles 
 \[ 0\to \EE \to \FF \to \GG \to 0\]
on $\MM_{(3)}$ splits if $\EE$ or $\GG$ is isomorphic to $f_*f^*\OO\tensor \omega^j$ for some $j$. 
\end{prop}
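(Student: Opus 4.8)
The plan is to recall that any short exact sequence $0 \to \EE \to \FF \to \GG \to 0$ of $\OO$-modules is classified by its extension class in $\Ext^1(\GG, \EE)$, and that the sequence splits precisely when this class vanishes. Thus it suffices to show that $\Ext^1(\GG, f_*f^*\OO \tensor \omega^j) = 0$ whenever $\EE \cong f_*f^*\OO \tensor \omega^j$, and that $\Ext^1(f_*f^*\OO \tensor \omega^j, \EE) = 0$ whenever $\GG \cong f_*f^*\OO \tensor \omega^j$, for $\EE, \GG$ arbitrary vector bundles. First I would strip off the twist: since $-\tensor \omega^{-j}$ is an exact autoequivalence of $\OO$-modules preserving vector bundles, there are natural isomorphisms $\Ext^1(\GG, f_*f^*\OO \tensor \omega^j) \cong \Ext^1(\GG \tensor \omega^{-j}, f_*f^*\OO)$ and $\Ext^1(f_*f^*\OO \tensor \omega^j, \EE) \cong \Ext^1(f_*f^*\OO, \EE \tensor \omega^{-j})$. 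As $\GG \tensor \omega^{-j}$ and $\EE \tensor \omega^{-j}$ remain vector bundles, it is enough to prove the two vanishing statements in the untwisted form, i.e.\ $\Ext^1(\VV, f_*f^*\OO) = 0$ and $\Ext^1(f_*f^*\OO, \VV) = 0$ for every vector bundle $\VV$.

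For the first of these I would argue exactly as in the paragraph preceding the proposition. The Grothendieck spectral sequence $H^s(\mathcal{E}xt^t_\OO(\VV, f_*f^*\OO)) \Rightarrow \Ext^{s+t}(\VV, f_*f^*\OO)$ has its $E_2$-page concentrated on the line $t = 0$, because $\VV$ is a vector bundle and hence dualizable, so all higher Ext-sheaves vanish. On that line the projection formula and adjunction give $\mathcal{H}om_\OO(\VV, f_*f^*\OO) \cong f_*\mathcal{H}om_{\OO_{\MM_0(2)_{(3)}}}(f^*\VV, \OO)$, which is the pushforward along $f$ of a quasi-coherent sheaf. Its higher cohomology vanishes by Lemma \ref{VanishingofCohomology}, so the spectral sequence collapses to $s = t = 0$ and $\Ext^i(\VV, f_*f^*\OO) = 0$ for all $i > 0$; in particular for $i = 1$.

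For the second vanishing I would reduce to the first by duality. Since both $f_*f^*\OO$ and $\VV$ are vector bundles, Lemma \ref{ExtAdjunction} (applied with the vector bundle $\VV$ in the tensor slot) yields $\Ext^i(f_*f^*\OO, \VV) \cong \Ext^i(f_*f^*\OO \tensor \check{\VV}, \OO)$, and a second application of the same lemma, together with the vanishing of the higher Ext-sheaves of a vector bundle, identifies this with $H^i$ of the dual of $f_*f^*\OO \tensor \check{\VV}$. By the self-duality $f_*f^*\OO \cong \check{f_*f^*\OO}$ of Lemma \ref{RepLem2}, this dual is $f_*f^*\OO \tensor \VV \cong f_*f^*\VV$ (projection formula), whose higher cohomology again vanishes by Lemma \ref{VanishingofCohomology}. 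Hence $\Ext^i(f_*f^*\OO, \VV) = 0$ for $i > 0$, completing the second case.

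The computational content of the proposition is therefore entirely packaged in Lemma \ref{VanishingofCohomology}; the only genuine extra input is the self-duality of $f_*f^*\OO$, which is needed to pass from the case $\EE \cong f_*f^*\OO \tensor \omega^j$ to the case $\GG \cong f_*f^*\OO \tensor \omega^j$. This self-duality is exactly the statement of Lemma \ref{RepLem2}, established by representation-theoretic means in the following subsection, so the main (and really only) obstacle is that the second case depends on that forward reference; everything else is formal manipulation of spectral sequences and dualizability.
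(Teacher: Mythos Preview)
Your proposal is correct and follows essentially the same argument as the paper: the paper's proof is the paragraph immediately preceding the proposition, which uses the Grothendieck spectral sequence together with Lemma \ref{VanishingofCohomology} for the case $\EE \cong f_*f^*\OO\tensor\omega^j$, and then invokes Lemma \ref{ExtAdjunction} and the self-duality of $f_*f^*\OO$ (Lemma \ref{RepLem2}, forward-referenced) to reduce the case $\GG \cong f_*f^*\OO\tensor\omega^j$ to the first one. Your write-up is slightly more explicit about stripping the twist and about the projection formula, but the content is the same.
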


\subsection{Representation-Theoretic Methods}
In this subsection, we will use integral representations of $S_3$ to produce vector bundles on $\MM_{(3)}$. We will recover by these means the examples of the last subsection, but in a more concrete setting. This will allow us to prove that $f_*f^*\OO$ is self-dual and to produce the short exact sequences (\ref{firstoftwo}) and (\ref{secondoftwo}). 

 We denote by $p\co \MM(2)_{(3)} \to \MM_{(3)}$ the forgetful morphism from the moduli stack of elliptic curves with level-$2$-structure at the prime $3$. This $S_3$-torsor is classified by a map $i\co \MM_{(3)} \to \Spec \Z_{(3)}//S_3$, which fits into a (2-)commutative square 
 \[\xymatrix{\MM(2)_{(3)} \ar[r] \ar[d] & \Spec \Z_{(3)} \ar[d]\\
  \MM_{(3)} \ar[r]^-i & \Spec \Z_{(3)}//S_3}
 \]

 Set $\Lambda_n = \omega^n(\MM(2)_{(3)})$. By \cite[Proposition 7.1]{Sto12}, we have $\MM(2)_{(3)} \simeq \Spec \Lambda_*//\G_m$ and thus $\QCoh(\MM(2)_{(3)}) \simeq \Lambda_*\grmod$. This induces a diagram
 \[\xymatrix{S_3-\Lambda_*\grmod   & S_3-\Z_{(3)}\modules\ar[l]\\
  \QCoh(\MM_{(3)}) \ar[u] & \QCoh(\Spec \Z_{(3)}//S_3)\ar[l]^{i^*}\ar[u]}
 \]
 The upper left corner consists of graded $\Lambda_*$-modules with semilinear $S_3$-action (with respect to the $S_3$-action on $\Lambda_*$ induced by that on $\MM_{(3)}(2)$). The two vertical maps are equivalences by Galois descent. We denote the composition of the inverse of the right vertical equivalence with $i^*$ by $I\co \Z_{(3)}[S_3]\modules \to \QCoh(\MM_{(3)})$. Viewing the right hand side as graded $S_3$-equivariant $\Lambda_*$-modules, this corresponds just to the functor $M\mapsto M\tensor \Lambda_*$ with diagonal $S_3$-action (i.e. the upper horizontal functor). Thus, we have $S_3$-equivariantly 
\[I\Z_{(3)}[S_3](\Spec \Lambda_*) \cong \bigoplus_{S_3}\Lambda_*;\]
 here we let $S_3$ act on $S_3$ from the left by $h \cdot g = gh^{-1}$; on the right hand side $S_3$ acts simultaneously by permuting the factors (by the action just described) and on $\Lambda_*$. This convention is chosen for the following reason: Consider the map 
  \[\MM_{(3)}(2)\times S_3 \xrightarrow{\simeq} \MM_{(3)}(2) \times_{\MM_{(3)}}\MM_{(3)}(2)\]
 indicated by the formula $(m,g) \mapsto (m, gm)$. If $S_3$ acts just on the left factor in the right hand side, the map becomes equivariant if we act on $S_3\times\MM_{(3)}(2)$ via $h\cdot(g,m) = (gh^{-1}, hm)$. We can base change this by $\Spec \Lambda_* \times_{\MM_{(3)}(2)}$ to get an equivalence 
\[\Spec\Lambda_*\times S_3 \xrightarrow{\simeq} \Spec\Lambda_* \times_{\MM_{(3)}}\MM_{(3)}(2)
\]
Thus, we have also $p_*p^*\OO(\Spec \Lambda_*) \cong_{S_3} \bigoplus_{S_3}\Lambda_*$ and therefore $I\Z_{(3)}[S_3] \cong p_*p^*\OO$ for \[p\co \MM(2)_{(3)}\to \MM_{(3)}\] the forgetful map as above. Note that it is true in general that $IM$ is a vector bundle of rank $n$ if $M\in\Z_{(3)}[S_3]\modules$ is free of rank $n$ as a $\Z_{(3)}$-module.

 Similarly, for $P$ the standard permutation representation of $S_3$ of rank $3$, we have that $IP \cong f_*f^*\OO$ (since $\MM_{(3)}(2)\times_{\MM_{(3)}} \MM_0(2)_{(3)} \simeq_{S_3} \coprod_{\{1,2,3\}}\MM_{(3)}(2)$). As 
\[\coprod_{\{1,2,3\}}\MM_{(3)}(2) \to \MM_{(3)}(2)\times_{\MM_{(3)}}\MM_0(2)_{(3)} \to \MM_{(3)}(2)\times_{\MM_{(3)}}\MM_{(3)}\simeq \MM-{(3)}(2)\] 
is the fold map $\coprod_{\{1,2,3\}}\MM_{(3)}(2)\to \MM_{(3)}(2)$, the functor $I$ sends the diagonal map $\Z_{(3)}\to P$ to the adjunction unit $\OO\to f_*f^*\OO$. \vspace{0.5cm}

 The group $S_3$ acts on $\Z_{(3)}[\zeta_3]$ for $\zeta_3$ a third root of unity via permuting the ordered set $(1, \zeta_3, \zeta_3^2)$ of roots of unity. Denote the basis vector corresponding to $1,2,3$ in $P$ by $t_1,t_2, t_3\in P$. We have two exact sequences
 \begin{align}\label{RepSeq1}0 \to \Z_{(3)} \to P \to \Z_{(3)}[\zeta_3] \to 0 \end{align}
 and 
 \begin{align}\label{RepSeq2}0 \to (1-\zeta_3)\Z_{(3)}[\zeta_3] \to P \to \Z_{(3)} \to 0\end{align}
 of $\Z_{(3)}[S_3]$-modules (sending $t_1$ to $1$ respectively $1-\zeta_3$ to $t_1-t_2$). Here, the map $\Z_{(3)} \to P$ is the diagonal and the map $P \to \Z_{(3)}$ is the summing map. Since $i$ is flat, $I$ is exact and we get exact sequences
 \begin{align}\label{VecSeq1}0 \to \OO \to f_*f^*\OO \to I\Z_{(3)}[\zeta_3] \to 0 \end{align}
 and
 \begin{align}\label{VecSeq2}0 \to I\left((1-\zeta_3)\Z_{(3)}[\zeta_3]\right) \to f_*f^*\OO \to \OO \to 0. \end{align}

\begin{prop}We have $\Lambda_* \cong \Z_{(3)}[\lambda_1,\lambda_2,\Delta^{-1}]$ with $\Delta = 16\lambda_1^2\lambda_2^2(\lambda_1-\lambda_2)^2$ and $|\lambda_1| = |\lambda_2| = 2$. Furthermore, the sub-$S_3$-representation $\Z_{(3)} \langle \lambda_1,\lambda_2\rangle\subset \Lambda_2$ (where $\Z_{(3)}\langle \lambda_1,\lambda_2\rangle$ denotes the free $\Z_{(3)}$-module of rank $2$ on generators $\lambda_1$ and $\lambda_2$) is isomorphic to $(1-\zeta_3)\Z_{(3)}[\zeta_3]$.\end{prop}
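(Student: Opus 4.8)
The plan is to make the Weierstrass model with ordered $2$-torsion explicit and then read off both assertions from it. Since $2$ is invertible in $\Z_{(3)}$, the non-zero $2$-torsion of any elliptic curve is \'etale of degree $3$, so a full level-$2$ structure is an \emph{ordering} $(P_1,P_2,P_3)$ of the three non-trivial $2$-torsion points. Presenting the universal curve as $y^2 = (x-e_1)(x-e_2)(x-e_3)$ with $e_i = x(P_i)$, translating $x$ shows that the individual $e_i$ are not globally defined over $\Z_{(3)}$ (this would need $\tfrac13$), but the translation-invariant differences $\lambda_1 = e_1 - e_3$ and $\lambda_2 = e_2 - e_3$ are; they lie in $\Lambda_2$ since $e_i$ scales by $u^2$ when $\omega_0$ scales by $u$. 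I would first record that, via \cite[Proposition 7.1]{Sto12} together with this model, $(\lambda_1,\lambda_2)$ identify $\Spec\Lambda_*$ with the open locus of $\Spec\Z_{(3)}[\lambda_1,\lambda_2]$ where the three roots are distinct, so that $\Lambda_* = \Z_{(3)}[\lambda_1,\lambda_2,\Delta^{-1}]$ (note $\Lambda_2$ is much larger than $V$ because of the negative powers of $\Delta$, so $V$ is genuinely a sub-representation). The standard formula $\Delta = 16\prod_{i<j}(e_i-e_j)^2$ for $y^2 = \text{(monic cubic)}$ then becomes $\Delta = 16\lambda_1^2\lambda_2^2(\lambda_1-\lambda_2)^2$, using $e_1 - e_2 = \lambda_1 - \lambda_2$, and inverting $\Delta$ is the same as inverting $\lambda_1\lambda_2(\lambda_1-\lambda_2)$ because $2$ is a unit.

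For the second assertion I would compute the $S_3$-action on $V := \Z_{(3)}\langle\lambda_1,\lambda_2\rangle$ directly, since $S_3$ permutes the $P_i$ and hence the roots $e_i$. On the transpositions one finds that $(12)$ swaps $\lambda_1\leftrightarrow\lambda_2$, that $(13)$ sends $(\lambda_1,\lambda_2)\mapsto(-\lambda_1,\lambda_2-\lambda_1)$, and that $(23)$ sends $(\lambda_1,\lambda_2)\mapsto(\lambda_1-\lambda_2,-\lambda_2)$. In particular $V$ is exactly the span of the root-differences, i.e.\ the augmentation submodule $\overline P = \ker\!\big(P \to \Z_{(3)}\big)$ of the permutation module $P = \Z_{(3)}\langle t_1,t_2,t_3\rangle$ under the summing map.

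I would then define the $\Z_{(3)}$-linear map $V \to \overline P$ by $\lambda_1\mapsto t_1 - t_3$ and $\lambda_2 \mapsto t_2 - t_3$. As $\{t_1-t_3,\,t_2-t_3\}$ is a $\Z_{(3)}$-basis of $\overline P$, this is an isomorphism of modules, and comparing it against the action of the generating transpositions on the $t_i$ (which I would tabulate exactly as above) shows it is $S_3$-equivariant. Finally, the already-established sequence (\ref{RepSeq2}) realizes $(1-\zeta_3)\Z_{(3)}[\zeta_3]$ precisely as $\ker\!\big(P \to \Z_{(3)}\big) = \overline P$ (with $1-\zeta_3 \mapsto t_1-t_2$), so $V \cong \overline P \cong (1-\zeta_3)\Z_{(3)}[\zeta_3]$ as claimed.

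The step I expect to be the main obstacle is the first one: pinning down that $\Lambda_*$ is \emph{exactly} $\Z_{(3)}[\lambda_1,\lambda_2,\Delta^{-1}]$ over $\Z_{(3)}$ and not merely over $\Z[\tfrac16]$. The delicate point is that one cannot normalize $\sum e_i = 0$ integrally at $3$, so the $e_i$ themselves are unavailable and one must argue with the translation-invariant differences throughout; one must also verify that $(\lambda_1,\lambda_2)$ really give free polynomial generators, equivalently that the moduli of ordered distinct triples modulo translation is represented by $\Spec\Z_{(3)}[\lambda_1,\lambda_2]$ away from $\lambda_1\lambda_2(\lambda_1-\lambda_2)=0$. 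Once the model is secured, both the discriminant formula and the representation-theoretic identification reduce to short finite computations.
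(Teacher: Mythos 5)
Your proof is correct, but it takes a somewhat different route from the paper's. For the first statement you and the paper ultimately rely on the same source: the paper cites the discussion preceding \cite[Proposition 7.1]{Sto12} outright, while you sketch the underlying model (ordered $2$-torsion, translation-invariant differences $\lambda_i = e_i - e_3$) and still defer to \cite{Sto12} for the fact that $(\lambda_1,\lambda_2)$ are free polynomial generators --- the step you rightly flag as the one you cannot make self-contained; your derivation of $\Delta = 16\lambda_1^2\lambda_2^2(\lambda_1-\lambda_2)^2$ from the discriminant of the monic cubic matches the paper's (which relates the $\lambda_i$ to the usual $b_i$) in content. The genuine divergence is in the second statement: the paper quotes the $S_3$-action formulas from \cite[Lemma 7.3]{Sto12} and exhibits the explicit equivariant map $\lambda_1 \mapsto \zeta_3 - 1$, $\lambda_2 \mapsto \zeta_3^2 - 1$, whereas you re-derive the action geometrically ($S_3$ permutes the roots $e_i$, hence acts on their differences), identify $\Z_{(3)}\langle\lambda_1,\lambda_2\rangle$ with the augmentation submodule $\ker(P \to \Z_{(3)})$ via $\lambda_i \mapsto t_i - t_3$, and then invoke the exactness of (\ref{RepSeq2}), which is established before the proposition, to conclude it is isomorphic to $(1-\zeta_3)\Z_{(3)}[\zeta_3]$. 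Your route is more self-contained (it in effect re-proves what Lemma 7.3 of \cite{Sto12} supplies) and more conceptual, explaining why this rank-$2$ representation occurs: it is the root-difference, i.e.\ augmentation, representation; the paper's version is shorter because it outsources the action formulas. Two remarks: checking equivariance only on the three transpositions is legitimate (they generate $S_3$), and since transpositions are involutions this conveniently sidesteps any left-versus-right (inverse) convention in the action on level structures; also, your composite isomorphism differs from the paper's explicit map by multiplication by a root of unity, which merely reflects a different labeling of the identification of $S_3$ with permutations of the torsion points and is harmless, as the proposition only asserts the existence of an equivariant isomorphism.
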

\begin{proof}This follows from the existence of the Legendre normal form, but we will give precise references. 

The first statement is contained in the discussion before \cite[Proposition 7.1]{Sto12} except for the formula for $\Delta$, which follows by relating the $\lambda_i$ to the usual $b_i$. 

A formula for the action of $S_3$ on $\Lambda_*$ is given in \cite[Lemma 7.3]{Sto12}. It follows that the map $\Z_{(3)} \langle \lambda_1,\lambda_2\rangle \to (1-\zeta_3)\Z_{(3)}[\zeta_3]$ given by $\lambda_1 \mapsto \zeta_3-1$ and $\lambda_2\mapsto \zeta_3^2-1$ is $S_3$-equivariant and thus a $S_3$-equivariant isomorphism.\end{proof}


\begin{lemma}\label{RepLem1}We have $I\left((1-\zeta_3)\Z_{(3)}[\zeta_3]\right) \cong \omega^4\tensor E_\alpha$, with notation as in the last subsection.\end{lemma}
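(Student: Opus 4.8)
The plan is to identify the rank-2 vector bundle $I\left((1-\zeta_3)\Z_{(3)}[\zeta_3]\right)$ by pinning down two features: its rank (visibly $2$, since $(1-\zeta_3)\Z_{(3)}[\zeta_3]$ is free of rank $2$ over $\Z_{(3)}$, so $I$ produces a rank-$2$ bundle by the remark at the end of the representation-theoretic setup) and the extension it sits in. The key observation is that the exact sequence (\ref{VecSeq2}), namely
\[0 \to I\left((1-\zeta_3)\Z_{(3)}[\zeta_3]\right) \to f_*f^*\OO \to \OO \to 0,\]
together with the self-duality of $f_*f^*\OO$ and the classification of the two rank-$2$ line-bundle extensions, should force the identification. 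First I would use the previous proposition, which identifies $(1-\zeta_3)\Z_{(3)}[\zeta_3]$ with the sub-$S_3$-representation $\Z_{(3)}\langle\lambda_1,\lambda_2\rangle \subset \Lambda_2$. Applying the functor $I$ to this inclusion and tracking the grading (the generators $\lambda_1,\lambda_2$ have weight $2$) realizes $I\left((1-\zeta_3)\Z_{(3)}[\zeta_3]\right)$ as the subsheaf of $\omega^4$-twisted data inside $f_*f^*\OO$, which already suggests the $\omega^4$-twist appearing in the statement.

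The heart of the argument is to produce a non-split short exact sequence of line bundles inside $I\left((1-\zeta_3)\Z_{(3)}[\zeta_3]\right)\tensor\omega^{-4}$ and match it against the defining sequence (\ref{mother}) of $E_\alpha$. Concretely, I would compute the determinant line bundle of $I\left((1-\zeta_3)\Z_{(3)}[\zeta_3]\right)$: since the determinant is multiplicative on the sequence (\ref{VecSeq2}) and $\det(f_*f^*\OO)$ can be read off from the permutation-module description (its $S_3$-action on $\bigoplus_{S_3}\Lambda_*$, with the weight bookkeeping), one obtains $\det I\left((1-\zeta_3)\Z_{(3)}[\zeta_3]\right) \cong \omega^{8}$, matching $\det(\omega^4\tensor E_\alpha) = \omega^8\tensor\det E_\alpha = \omega^8$ since $E_\alpha$ has trivial determinant by (\ref{mother}). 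Having matched rank and determinant, I would then exhibit an explicit sub-line-bundle of $I\left((1-\zeta_3)\Z_{(3)}[\zeta_3]\right)\tensor\omega^{-4}$ whose quotient is another line bundle, giving an extension of the shape classified by $\Ext^1 = H^1(\MM_{(3)};\omega^2) = \Z/3$, and argue this extension is non-split.

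The cleanest route to non-splitness, and the step I expect to be the main obstacle, is to show that $I\left((1-\zeta_3)\Z_{(3)}[\zeta_3]\right)\tensor\omega^{-4}$ is indecomposable, so that it cannot be a sum of two line bundles and must therefore be the unique non-trivial extension $E_\alpha$. For this I would compute $H^i(\MM_{(3)}; I\left((1-\zeta_3)\Z_{(3)}[\zeta_3]\right)\tensor\omega^{-4+j})$ from the long exact sequence attached to (\ref{VecSeq2}) tensored with $\omega^{-4+j}$, using the vanishing of the cohomology of $f_*f^*\OO\tensor\omega^{j}$ (Lemma \ref{VanishingofCohomology}) and the known cohomology of $\omega^j$ from Subsection \ref{EllCoh}. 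The cohomology pattern so obtained should coincide with the cohomology of $\omega^4\tensor E_\alpha$ computed via Proposition \ref{CohEa} (and Lemma \ref{ExtAdjunction}), and in particular should rule out a splitting into line bundles by the same argument used in the corollary that $E_\alpha$ is indecomposable. The delicate point is organizing the $S_3$-equivariant identification so that the boundary map in the long exact sequence is genuinely multiplication by $\alpha$ rather than $0$; I would pin this down using Lemma \ref{extmult} together with the explicit $S_3$-action on $\lambda_1,\lambda_2$ from the cited results of Stojanoska, which is what forces the extension class to be the generator of $H^1(\MM_{(3)};\omega^2)$.
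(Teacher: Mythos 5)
Your overall strategy runs parallel to the paper's own proof: compute the cohomology of $I\left((1-\zeta_3)\Z_{(3)}[\zeta_3]\right)$ from the long exact sequence of (\ref{VecSeq2}) together with the vanishing of the higher cohomology of $f_*f^*\OO$ (Lemma \ref{VanishingofCohomology}), deduce indecomposability, and then invoke the classification of non-split extensions of line bundles from Subsection \ref{ExIndec}. But there is a genuine gap at the decisive step: you never construct the sub-line-bundle with line-bundle quotient --- you only announce that you ``would exhibit'' one. This cannot be treated as a routine detail. Indecomposability of a rank-$2$ bundle does \emph{not} imply that it is an extension of line bundles; the paper itself poses it as an open question whether every vector bundle on $\MM_{(3)}$ is an iterated extension of line bundles, explicitly noting that the answer is unknown even in rank $2$. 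So the inference ``indecomposable, hence it cannot be a sum of two line bundles and must therefore be the unique non-trivial extension $E_\alpha$'' is broken without this input. The paper's mechanism is concrete and is where the real work happens: after identifying $(1-\zeta_3)\Z_{(3)}[\zeta_3]$ with $\Z_{(3)}\langle\lambda_1,\lambda_2\rangle\subset\Lambda_2$, the multiplication map $\Lambda_*\tensor\Z_{(3)}\langle\lambda_1,\lambda_2\rangle\to\Lambda_{*+2}$ is an $S_3$-equivariant \emph{surjection}, because $\lambda_2$ divides the unit $\Delta$ and is therefore itself a unit in $\Lambda_*$; Galois descent converts this into a surjection of sheaves $I\left((1-\zeta_3)\Z_{(3)}[\zeta_3]\right)\to\omega^2$, whose kernel is a line bundle since surjections onto vector bundles split locally. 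Your remark about ``tracking the grading'' of the inclusion $\Z_{(3)}\langle\lambda_1,\lambda_2\rangle\subset\Lambda_2$ gestures in this direction, but it supplies neither the surjectivity (this is exactly where the invertibility of $\lambda_2$ enters) nor the identification of the quotient as $\omega^2$; and note that a sub-line-bundle of $I(M)$ cannot be produced from a subrepresentation of $M$ alone, since $(1-\zeta_3)\Z_{(3)}[\zeta_3]$ is indecomposable as an $S_3$-representation.

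Two smaller points. First, your determinant check contains an error: by (\ref{mother}), $\det E_\alpha\cong\omega^{-2}$, not $\OO$, so $\det(\omega^4\tensor E_\alpha)\cong\omega^6$ rather than $\omega^8$, and likewise $\det I\left((1-\zeta_3)\Z_{(3)}[\zeta_3]\right)\cong\det f_*f^*\OO\cong\omega^6$. The two sides still agree (you made the same mistake on both), but the stated values are wrong; and in any case rank plus determinant cannot identify the bundle, since for instance $\OO\oplus\omega^6$ has the same rank and determinant. Second, the worry in your final paragraph about showing the boundary map is ``genuinely multiplication by $\alpha$'' is a red herring: for the indecomposability argument one only needs the isomorphisms $H^{i+1}(\MM_{(3)};I\left((1-\zeta_3)\Z_{(3)}[\zeta_3]\right)\tensor\omega^k)\cong H^i(\MM_{(3)};\omega^k)$ for $i\geq 1$, which follow formally from the vanishing of the higher cohomology of $f_*f^*\OO\tensor\omega^k$; no identification of connecting homomorphisms is required, and Lemma \ref{extmult} cannot substitute for the missing sub-line-bundle construction.
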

\begin{proof}
Since the higher cohomology of $f_*f^*\OO$ vanishes, we have by the long exact sequence associated to extension (\ref{VecSeq2}) that 
\[H^2_k(\MM_{(3)}; I\left((1-\zeta_3)\Z_{(3)}[\zeta_3]\right)) = \begin{cases}\Z/3 & \text{ for } k\equiv 2 \mod 12 \\
0 & \text{ else.} \end{cases} \]
Thus, the rank $2$ vector bundle $I\left((1-\zeta_3)\Z_{(3)}[\zeta_3]\right)$ is indecomposable (since every line bundle has non-trivial second cohomology in every $12$-th degree).

By the last proposition, the multiplication map $\Lambda_*\tensor \Lambda_* \to \Lambda_*$ restricts to an $S_3$-equivariant map $\Lambda_* \tensor (1-\zeta_3)\Z_{(3)}[\zeta_3] \to \Lambda_{*+2}$, which is surjective (since $\lambda_2$ is a unit as it is a divisor of $\Delta$). By Galois descent, this induces in turn a surjective map $I\left((1-\zeta_3)\Z_{(3)}[\zeta_3]\right) \to \omega^2$. Its kernel is a line bundle (since locally every surjection onto a vector bundle splits). The vector bundle $I\left((1-\zeta_3)\Z_{(3)}[\zeta_3]\right)$ is thus a non-split extension of $\omega^2$ with another line bundle. Hence, by the considerations at the beginning of Subsection \ref{ExIndec}, it has to be isomorphic to $E_\alpha\tensor \omega^4$.\end{proof}

\begin{lemma}\label{RepLem2}We have $I\Z_{(3)}[\zeta_3] \cong \omega^{-2} \tensor E_\alpha$ and $f_*f^*\OO$ is self-dual.\end{lemma}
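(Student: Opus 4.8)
The plan is to deduce both assertions from the single observation that the functor $I$ preserves duals of those $\Z_{(3)}[S_3]$-modules that are finite free over $\Z_{(3)}$. By construction $I$ is the composite of the inverse of a Galois-descent equivalence with the flat pullback $i^*$, and both of these are exact and symmetric monoidal (the descent equivalence identifies the tensor product of sheaves with the $\Z_{(3)}$-tensor product carrying the diagonal action). Hence $I$ is symmetric monoidal, and a symmetric monoidal functor carries a dualizable object $M$ to a dualizable object with $I(\check M)\cong \check{(IM)}$; here the left-hand dual is the $\Z_{(3)}$-linear dual with contragredient action and the right-hand dual is the $\OO$-linear dual of sheaves as in $\check{E_\alpha}$. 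Every $\Z_{(3)}[S_3]$-module that is finite free over $\Z_{(3)}$ is dualizable, so this applies to all modules below.

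I first treat self-duality. The permutation representation $P$ is self-dual: sending the basis $t_1,t_2,t_3$ to the dual basis is an $S_3$-equivariant isomorphism $P\cong\check P$, since $S_3$ merely permutes the $t_i$. Applying $I$ and using $IP\cong f_*f^*\OO$ yields $f_*f^*\OO\cong \check{(f_*f^*\OO)}$, which is the second claim.

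For the first claim I dualize the sequence (\ref{RepSeq2}). Since all three terms are finite free over $\Z_{(3)}$, dualization is exact and gives a short exact sequence whose middle term is $\check P\cong P$, whose quotient is the dual of $(1-\zeta_3)\Z_{(3)}[\zeta_3]$, and whose sub-object is the transpose of the summing map $P\to\Z_{(3)}$. Under the identification $\check P\cong P$ above, the summing map $t_i\mapsto 1$ transposes to the diagonal map $1\mapsto t_1+t_2+t_3$, so the inclusion $\Z_{(3)}\hookrightarrow\check P\cong P$ agrees with the diagonal inclusion in (\ref{RepSeq1}). Two short exact sequences with the same inclusion of sub-object have canonically isomorphic quotients, whence the dual of $(1-\zeta_3)\Z_{(3)}[\zeta_3]$ is isomorphic to $\Z_{(3)}[\zeta_3]$ as a $\Z_{(3)}[S_3]$-module. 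Applying $I$, commuting it with duality, and invoking Lemma \ref{RepLem1} together with
\[\check{(\omega^4\tensor E_\alpha)}\cong \omega^{-4}\tensor\check{E_\alpha}\cong\omega^{-2}\tensor E_\alpha\]
(the last step by Lemma \ref{Eadual}) gives $I\Z_{(3)}[\zeta_3]\cong\omega^{-2}\tensor E_\alpha$.

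The main obstacle is precisely the compatibility of $I$ with duality; it rests on exhibiting $I$ as flat pullback followed by Galois descent, so that it is a restriction of a symmetric monoidal functor. This is why it is preferable to argue representation-theoretically rather than to attempt self-duality of $f_*\OO$ through a trace pairing, which at the prime $3$ need not be perfect for the degree-$3$ cover $f$. A secondary point demanding care is the bookkeeping that the self-duality $P\cong\check P$ interchanges the summing and diagonal maps, so that the dual of (\ref{RepSeq2}) genuinely shares its sub-object with (\ref{RepSeq1}).
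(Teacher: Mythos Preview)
Your argument is correct and follows essentially the same route as the paper: show $P\cong\check P$ via the basis-to-dual-basis map, observe that this interchanges the diagonal and summing maps so that (\ref{RepSeq1}) and (\ref{RepSeq2}) are dual, deduce that $\Z_{(3)}[\zeta_3]$ and $(1-\zeta_3)\Z_{(3)}[\zeta_3]$ are dual representations, and then use that $I$ (being flat pullback composed with a descent equivalence) preserves duals together with Lemmas \ref{RepLem1} and \ref{Eadual}. The only cosmetic difference is that you phrase the compatibility of $I$ with duality via symmetric monoidality of $I$ and dualizability of finite free modules, whereas the paper cites the commutative-algebra fact that flat pullback preserves duals directly.
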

\begin{proof} Equip $\check{P} = \Hom(P, \Z_{(3)})$ with the $S_3$-action $(g\cdot f)(p) = f(g^{-1}(p))$. Then sending each basis vector of $P$ to its dual vector defines an $S_3$-equivariant isomorphism $P\cong \check{P}$. With this identification, the dual of the diagonal is the summing map. Thus, the short exact sequences (\ref{RepSeq1}) and (\ref{RepSeq2}) are dual to each other and hence the dual of $(\Z_{(3)}[\zeta_3])$ is isomorphic to $(1-\zeta_3)\Z_{(3)}[\zeta_3]$. Since $i$ is a flat map, $I = \text{equivalence}\, \circ\, i^*$ sends duals to duals (see \cite[IV, Proposition 18]{Ser00} for the commutative algebra statement). Hence, $I\Z_{(3)}[\zeta_3] \cong \omega^{-2} \tensor E_\alpha$ by Lemma \ref{Eadual} and $(f_*f^*\OO)\check{} \cong f_*f^*\OO$.\end{proof}

Thus, the short exact sequences (\ref{VecSeq1}) and (\ref{VecSeq2}) are isomorphic to
\begin{eqnarray}\label{firstoftwo}0 \to \OO \to f_*f^*\OO \to E_\alpha\otimes \omega^{-2}\to 0 \end{eqnarray}
and
\begin{eqnarray}\label{secondoftwo}0 \to E_\alpha \otimes \omega^4 \to f_*f^*\OO \to \OO \to 0 \end{eqnarray}
where the map $\OO \to f_*f^*\OO$ is the adjunction map and the map $f_*f^*\OO \to \OO$ is its dual (under a chosen isomorphism $(f_*f^*\OO)\check{} \cong f_*f^*\OO$). 

The two extensions (\ref{firstoftwo}) and (\ref{secondoftwo}) are non-split (as can be seen by computing cohomology). Thus, extension (\ref{secondoftwo}) corresponds to $\pm$ the $\Ext^1$-class $\widetilde{\alpha}$ introduced in Proposition \ref{CohEa}. 

\subsection{Extensions of $E_\alpha$ and $E_\alpha \tensor \omega^j$}
\begin{prop}\label{EaExtension}Let 
 \[0 \to E_\alpha \to Y \to E_\alpha\tensor \omega^j \to 0\]
be a non-split extension. Then $j\equiv -2 \mod 12$ and $Y \cong E_\alpha \tensor E_\alpha \cong f_*f^*\OO\oplus \omega^{-2}$.
\end{prop}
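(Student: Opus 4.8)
The plan is first to compute the governing $\Ext$-group, then to realize the middle term $Y$ through two independent constructions whose outputs I can match up.

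\emph{Computing $\Ext^1(E_\alpha\tensor\omega^j,E_\alpha)$.} Twisting \eqref{firstoftwo} by $\omega^2$ yields the short exact sequence
\[0\to\omega^2\to f_*f^*\omega^2\to E_\alpha\to0,\]
to which I would apply $\Ext^\bullet(E_\alpha\tensor\omega^j,-)$. By Lemma \ref{ExtAdjunction} and the remark following Lemma \ref{VanishingofCohomology}, the groups $\Ext^i(E_\alpha\tensor\omega^j,f_*f^*\omega^2)\cong\Ext^i(E_\alpha\tensor\omega^{j-2},f_*f^*\OO)$ vanish for $i>0$, so the long exact sequence collapses to
\[\Ext^1(E_\alpha\tensor\omega^j,E_\alpha)\cong\Ext^2(E_\alpha\tensor\omega^j,\omega^2)\cong\Ext^2(\omega^{j-4},E_\alpha),\]
the second isomorphism being Lemma \ref{ExtAdjunction} together with the identity $\Ext^i(E_\alpha\tensor\omega^k,\OO)\cong\Ext^i(\omega^{k-2},E_\alpha)$. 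By Proposition \ref{CohEa} this is $\Z/3$ exactly when $j\equiv-2\pmod{12}$ and $0$ otherwise. Hence a non-split extension forces $j\equiv-2\pmod{12}$; and since the group is cyclic of order $3$ (its two non-zero classes being interchanged by $-\id$ on $E_\alpha$, as in the discussion after \eqref{mother}), the bundle $Y$ is determined up to isomorphism. As $\omega^{12}\cong\OO$ we may assume $j=-2$.

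\emph{Identifying $Y$ with $f_*f^*\OO\oplus\omega^{-2}$.} Let $a\co\OO\to E_\alpha$ be the inclusion in \eqref{mother} and $\iota\co\OO\to f_*f^*\OO$ the adjunction unit in \eqref{firstoftwo}, and let $Y'$ be the pushout of $E_\alpha\xleftarrow{a}\OO\xrightarrow{\iota}f_*f^*\OO$. Pushing \eqref{firstoftwo} out along $a$ presents $Y'$ as an extension $0\to E_\alpha\to Y'\to E_\alpha\tensor\omega^{-2}\to0$ whose class is the image under $a_*$ of the class of \eqref{firstoftwo}, while pushing \eqref{mother} out along $\iota$ presents the same $Y'$ as an extension $0\to f_*f^*\OO\to Y'\to\omega^{-2}\to0$. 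The latter splits by Proposition \ref{ExtEaa}, so $Y'\cong f_*f^*\OO\oplus\omega^{-2}$. To see the former is non-split I observe that in the long exact sequence of $\Ext^\bullet(E_\alpha\tensor\omega^{-2},-)$ attached to \eqref{mother} the term after $a_*$ is $\Ext^1(E_\alpha\tensor\omega^{-2},\omega^{-2})\cong\Ext^1(\omega^{-2},E_\alpha)=0$; thus $a_*$ is a surjection, hence an isomorphism, of groups $\Z/3$, and it carries the non-zero class of \eqref{firstoftwo} to a non-zero class. Therefore $Y\cong Y'\cong f_*f^*\OO\oplus\omega^{-2}$.

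\emph{Identifying $Y$ with $E_\alpha\tensor E_\alpha$.} By transitivity it now suffices to prove $E_\alpha\tensor E_\alpha\cong f_*f^*\OO\oplus\omega^{-2}$. Since $2$ is invertible in $\Z_{(3)}$ there is a canonical splitting $E_\alpha\tensor E_\alpha\cong\operatorname{Sym}^2E_\alpha\oplus\det E_\alpha$, and $\det E_\alpha\cong\omega^{-2}$ by \eqref{mother}, so I must show $\operatorname{Sym}^2E_\alpha\cong f_*f^*\OO$. Taking symmetric squares of \eqref{mother} equips $\operatorname{Sym}^2E_\alpha$ with a filtration with graded pieces $\OO,\omega^{-2},\omega^{-4}$; its bottom step reads $0\to\OO\to\operatorname{Sym}^2E_\alpha\to Q\to0$ with $0\to\omega^{-2}\to Q\to\omega^{-4}\to0$. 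A short calculation (the Leibniz rule for the symmetric square of an extension) shows the latter class is a non-zero multiple of $\alpha$, so $Q\cong E_\alpha\tensor\omega^{-2}$ and $\operatorname{Sym}^2E_\alpha$ is an extension of $E_\alpha\tensor\omega^{-2}$ by $\OO$ of the same shape as \eqref{firstoftwo}. Pulling this extension back along the sub-line bundle $\omega^{-2}\hookrightarrow E_\alpha\tensor\omega^{-2}$ recovers the defining sequence \eqref{mother}, whose class $\alpha$ is non-zero, so the class of $\operatorname{Sym}^2E_\alpha$ in $\Ext^1(E_\alpha\tensor\omega^{-2},\OO)\cong\Z/3$ is non-zero. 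As $f_*f^*\OO$ realizes a non-zero class of this same $\Z/3$ and non-zero classes yield isomorphic middle terms, $\operatorname{Sym}^2E_\alpha\cong f_*f^*\OO$, and the proof is complete.

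The step I expect to be most delicate is the last one: controlling the three-step filtration of $\operatorname{Sym}^2E_\alpha$ and verifying that both of the extension classes arising there are non-zero, since a vanishing class would spuriously split off a line bundle and break the identification with $f_*f^*\OO$.
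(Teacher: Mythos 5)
Your proposal is correct, and it splits naturally into a part that mirrors the paper and a part that is genuinely different. Your $\Ext$-computation and your identification of $Y$ are essentially the paper's argument: your pushout $Y'$ of $E_\alpha\xleftarrow{a}\OO\xrightarrow{\iota}f_*f^*\OO$ is exactly the bundle the paper calls $X$, and the paper's $3\times 3$ diagram encodes precisely your two descriptions of that pushout (one splitting by Proposition \ref{ExtEaa}, one exhibiting the class $a_*[(\ref{firstoftwo})]$). Your non-splitness argument is actually tidier than the paper's: you read off surjectivity of $a_*$ from the long exact sequence of \eqref{mother} using $\Ext^1(E_\alpha\tensor\omega^{-2},\omega^{-2})\cong\Ext^1(\omega^{-2},E_\alpha)=0$, whereas the paper compares $H^1$ of $\omega^6$-twists of $X$ and of $E_\alpha\oplus E_\alpha\tensor\omega^{-2}$. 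Where you truly diverge is the identification of $E_\alpha\tensor E_\alpha$: the paper simply tensors \eqref{mother} with $E_\alpha$, checks by a cohomology comparison that the resulting extension is non-split, and invokes uniqueness of the non-split middle term in $\Z/3$ --- two lines. You instead use the characteristic-$\neq 2$ splitting $E_\alpha\tensor E_\alpha\cong\operatorname{Sym}^2E_\alpha\oplus\det E_\alpha$ and prove $\operatorname{Sym}^2E_\alpha\cong f_*f^*\OO$ via the three-step filtration of the symmetric square. This costs you the ``Leibniz rule'' computation you rightly flag as the delicate point, but it is sound: in local trivializations, if $E_\alpha$ has transition data $\bigl(\begin{smallmatrix}1&\alpha\\0&1\end{smallmatrix}\bigr)$, then $Q=\operatorname{Sym}^2E_\alpha/\OO$ has transition data $\bigl(\begin{smallmatrix}1&2\alpha\\0&1\end{smallmatrix}\bigr)$, so $[Q]=2\alpha=-\alpha\neq 0$ in $\Z/3$ --- the argument works exactly because $2$ is a unit mod $3$. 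What your route buys is the sharper structural statement $\operatorname{Sym}^2E_\alpha\cong f_*f^*\OO$ (and $\det E_\alpha\cong\omega^{-2}$), which the paper never makes explicit; what the paper's route buys is brevity, since it never needs to open up the symmetric square at all.
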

\begin{proof}
If we tensor the extension (\ref{firstoftwo}) with $E_\alpha$, we get:
\[0 \to E_\alpha\to f_*f^*\OO\tensor E_\alpha \to E_\alpha\tensor E_\alpha \tensor \omega^{-2}\to 0\]
The middle term is isomorphic to $f_*f^*\OO\oplus f_*f^*\OO\tensor \omega^{-2}$ (as can be seen by tensoring the extension (\ref{mother}) with $f_*f^*\OO$) and therefore $\Ext^i(f_*f^*\OO\tensor E_\alpha, \FF) = 0$ for every vector bundle $\FF$ and $i>0$ by Proposition \ref{ExtEaa}. Thus, using Lemma \ref{ExtAdjunction} for the second isomorphism, 
\[\Ext^2(\omega^{j-4}, E_\alpha)\cong \Ext^1(\omega^{j-2}, E_\alpha\tensor E_\alpha) \cong  \Ext^1(E_\alpha\tensor\omega^j, E_\alpha),\] 
which is zero unless $j\equiv -2 \mod 12$, when it is isomorphic to $\mathbb{Z}/3$, by Proposition \ref{CohEa}. Tensoring (\ref{mother}) with $E_\alpha$ gives an extension
 \[0\to E_\alpha \to E_\alpha\tensor E_\alpha \to E_\alpha\tensor\omega^{-2}\to 0,\] 
which is non-split since $H^1(\MM_{(3)}; E_\alpha\tensor E_\alpha\tensor \omega^6) = 0 $ differs from $H^1(\MM_{(3)}; (E_\alpha\oplus E_\alpha\tensor\omega^{-2})\tensor \omega^6) \cong \Z/3$ by the calculation above and Proposition \ref{CohEa}. It follows that this extension represents a generator of $\Ext^1(E_\alpha\tensor \omega^{-2}, E_\alpha)$. 

Consider now the extension $X$ corresponding to an element in $\Ext^1(E_\alpha\tensor\omega^{-2},E_\alpha)$ coming from a generator in $\Ext^1(E_\alpha\tensor\omega^{-2},\OO)$ (corresponding to (\ref{firstoftwo})) via the map induced by $\OO\to E_\alpha$. This extension sits in a diagram

\[\xymatrix{&0\ar[d]&0\ar[d]&&\\
0\ar[r]&\OO\ar[r]\ar[d]&f_*f^*\OO\ar[r]\ar[d]&E_\alpha\tensor \omega^{-2}\ar[r]\ar[d]^=&0\\
0\ar[r]&E_\alpha\ar[r]\ar[d]&X \ar[d]\ar[r]&E_\alpha\tensor\omega^{-2}\ar[r]&0\\
&\omega^{-2}\ar[d]\ar[r]^\cong&\omega^{-2} \ar[d]&&\\
&0&0&&}\]

\noindent with rows and columns exact. This implies $X \cong f_*f^*\OO\oplus \omega^{-2}$ by Proposition \ref{ExtEaa}. Thus, the cohomology of $X\tensor \omega^6$ differs from that of $(E_\alpha\oplus E_\alpha\tensor\omega^{-2})\tensor \omega^6$ and the middle horizontal extension is non-split. Hence, $E_\alpha\tensor E_\alpha \cong X$ as $\Ext^1(E_\alpha\tensor \omega^{-2}, E_\alpha) = \Z/3$.. 
\end{proof}

\section{Classification of Iterated Extensions of Line Bundles on $\MM_{(3)}$}\label{Classifying}
In this section, we want to classify all iterated extensions of line bundles on $\MM_{(3)}$. For shortness, we will call iterated extensions of line bundles \textit{standard vector bundles}. More precisely:
\begin{defi}\label{AlgebraStand}We define the notion of a \textit{standard vector bundle} on a ringed site $\XX$ inductively on the rank as follows: Every line bundle on $\XX$ is called \textit{standard}. Furthermore, a vector bundle $\EE$ on $\XX$ is called \textit{standard} if there is an injection $\LL \hookrightarrow \EE$ from a line bundle on $\XX$ such that the cokernel is a standard vector bundle. \end{defi}
Equivalently, we can characterize the class of standard vector bundles as the smallest sub-class of all vector bundles which is closed under extensions and contains all line bundles. 

\begin{thm}[Theorem B]Every standard vector bundle on $\MM_{(3)}$ is a direct sum of the form $\bigoplus_I \omega^{n_i} \oplus \bigoplus_J E_\alpha\tensor\omega^{n_j} \oplus \bigoplus_K f_*f^*\OO\tensor \omega^{n_k}$.\end{thm}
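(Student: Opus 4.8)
The strategy is induction on the rank of the standard vector bundle $\EE$. The base case (rank $1$) is trivial, since a line bundle is of the form $\omega^{n}$ by Theorem \ref{ClassLine} (or its corollary). For the inductive step, by definition of a standard vector bundle there is a short exact sequence
\[0 \to \LL \to \EE \to \EE' \to 0\]
with $\LL = \omega^{n}$ a line bundle and $\EE'$ a standard vector bundle of strictly smaller rank. By the inductive hypothesis, $\EE'$ is a direct sum of the three allowed building blocks, $\omega^{m}$, $E_\alpha\tensor\omega^{m}$, and $f_*f^*\OO\tensor\omega^{m}$. The extension class lives in $\Ext^1(\EE', \LL) \cong \bigoplus \Ext^1(\text{summand}, \omega^{n})$, so it suffices to analyze extensions of each type of summand by a single line bundle $\omega^{n}$ and argue that $\EE$ decomposes compatibly.

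First I would dispose of the $f_*f^*\OO$ summands: by Proposition \ref{ExtEaa}, any extension in which $f_*f^*\OO\tensor\omega^{m}$ appears as the quotient (or sub) automatically splits off, so those summands contribute direct factors $f_*f^*\OO\tensor\omega^{m}$ to $\EE$ and can be removed. After this reduction, $\EE'$ is a sum only of line bundles $\omega^{m}$ and copies of $E_\alpha\tensor\omega^{m}$, and I must understand extensions of such summands by $\omega^{n}$. For a quotient $\omega^{m}$, the relevant group is $\Ext^1(\omega^{m},\omega^{n}) = H^1(\MM_{(3)};\omega^{n-m})$, which by Subsection \ref{EllCoh} is $\Z/3$ exactly when $n-m\equiv 2 \pmod{12}$ and $0$ otherwise; a nonzero class here produces, by the discussion at the start of Subsection \ref{ExIndec}, a copy of $E_\alpha\tensor\omega^{m}$. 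For a quotient $E_\alpha\tensor\omega^{m}$, I would use Lemma \ref{ExtAdjunction} together with Lemma \ref{Eadual} to rewrite $\Ext^1(E_\alpha\tensor\omega^{m},\omega^{n})$ in terms of $\Ext^1(\omega^{m-2}, E_\alpha\tensor\omega^{n})$ and thus reduce to the computation in Proposition \ref{CohEa}; the nonzero case feeds a generator $\at$, and Proposition \ref{EaExtension} identifies the resulting extension of $E_\alpha$-type bundles as $f_*f^*\OO\oplus\omega^{m-2}$, landing back inside the allowed list.

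The main obstacle is the bookkeeping for extensions involving several summands at once: the extension class is a tuple of classes in the various $\Ext^1$-groups, and I must show that after a change of basis (an automorphism of $\EE'$ and of $\LL$) the nonzero components can be treated independently and that the glued-up bundle still decomposes into the three standard types. Concretely, the delicate point is that when the class pairs $\omega^{n}$ simultaneously with an $E_\alpha\tensor\omega^{m}$ summand (the case $\at$), the extension is \emph{not} itself a simple sum but becomes $f_*f^*\OO\oplus\omega^{m-2}$ via Proposition \ref{EaExtension}; I would handle this by first splitting off all summands on which the class vanishes, then treating the remaining ''essential'' extensions one building block at a time, invoking Proposition \ref{EaExtension} and the $E_\alpha$-from-$\alpha$ construction to re-express each as a sum of the three allowed types. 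Assembling these, together with the vanishing of the relevant $\Ext^1$-groups outside the explicit congruence classes (so that most potential cross-terms are forced to be zero), yields that $\EE$ is again a direct sum of bundles of the stated form, completing the induction.
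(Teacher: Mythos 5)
Your inductive setup and the ingredients you cite (Proposition \ref{ExtEaa} for disposing of $f_*f^*\OO$-summands, the extension computations via Proposition \ref{CohEa}, and Proposition \ref{EaExtension}) are the same as the paper's, but the assembly step --- which you describe as treating the remaining essential extensions ``one building block at a time'' --- contains a genuine gap, and it is exactly the point where the paper has to do real work. Two things go wrong. First, a misattribution: a non-split extension with quotient $E_\alpha\tensor\omega^{m}$ and a line bundle as sub is $f_*f^*\OO\tensor\omega^{m+2}$ (rank $3$; it is sequence (\ref{firstoftwo}) twisted), not $f_*f^*\OO\oplus\omega^{m-2}$ (rank $4$); the latter is the answer of Proposition \ref{EaExtension}, which classifies extensions whose sub \emph{and} quotient are both of $E_\alpha$-type, and such extensions only arise at a second stage, after the sub-line bundle $\LL$ has been absorbed into an $E_\alpha$ because the class was nonzero on some $\omega$-summand. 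Second, and fatally for the plan as written: when that second stage occurs, Proposition \ref{EaExtension} tells you that $\EE$ contains (a twist of) $f_*f^*\OO\oplus\omega^{-2}$ as a sub-bundle whose quotient is the sum of the remaining summands. The factor $f_*f^*\OO$ does split off $\EE$ by Proposition \ref{ExtEaa}, but the factor $\omega^{-2}$ does not: it is still the sub of a possibly non-trivial extension by the remaining summands. So this ``essential block'' has not been re-expressed as a direct summand of $\EE$; you have merely traded the original problem for the same problem with a smaller quotient, and your assertion that the remaining cross-terms ``are forced to be zero'' fails precisely for this interaction --- it is the one non-vanishing cross-term in the whole situation.

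This is where the paper inserts its one extra idea: it fixes from the start a presentation $0\to\omega^q\to X\to Y\to 0$ minimizing the number $|I_Y|$ of line-bundle summands of $Y$, and in the problematic case it constructs a surjection $X\to Z$ with line-bundle kernel onto a standard sum $Z$ with $|I_Z|=|I_Y|-1$, contradicting minimality; so under that assumption the bad case simply cannot occur. Your proposal has no substitute for this device. It can, however, be repaired inside your own framework: in the problematic case one has $\EE\cong f_*f^*\OO\oplus W$, where $W$ is an extension of the remaining summands by $\omega^{-2}$; such a $W$ is again a standard bundle, of rank $\rk\EE-3$, so your induction hypothesis (which is on rank, not on the chosen filtration) applies to $W$ and closes the argument. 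Either the paper's minimality argument or this recursion on rank must be supplied; without one of them the central step of your plan does not go through.
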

\begin{proof}We will prove this theorem by induction on the rank of the vector bundle. The rank $1$ case is the classification of line bundles.

So assume that we have proven the theorem for all standard vector bundles of rank smaller than $n$ and that $X$ is a standard vector bundle of rank $N$. By the induction hypothesis, we have a short exact sequence
\begin{eqnarray}\label{vst}0\to \omega^q \to X \to Y \to 0\end{eqnarray}

\noindent where $Y$ is of the form \[\bigoplus_{I_Y} \omega^{n_i} \oplus \bigoplus_{J_Y} E_\alpha\tensor\omega^{n_j} \oplus \bigoplus_{K_Y} f_*f^*\OO\tensor \omega^{n_k}\] and of rank $(N-1)$. We will call the depicted summands of $Y$ its \textit{standard summands}. Furthermore, we can assume that there is no vector bundle $Z$ which is isomorphic to \[\bigoplus_{I_Z} \omega^{m_i} \oplus \bigoplus_{J_Z} E_\alpha\tensor\omega^{m_j} \oplus \bigoplus_{K_Z} f_*f^*\OO\tensor \omega^{m_k}\] with $|I_Z| < |I_Y|$ such that there is a surjective morphism $X \to Z$ with a line bundle as kernel. In addition, we assume (for notational simplicity) that $q=0$.

We assume for contradiction that $X$ is not of the form which is demanded by the theorem we want to prove. Then the extension (\ref{vst}) is non-trivial. Since the Ext functor commutes with (finite) direct sums, there is at least one standard summand $S$ of $Y$ such that the map $\Ext^1(Y, \OO) \to \Ext^1(S,\OO)$ (induced by the inclusion) sends the class $x\in\Ext^1(Y,\OO)$ corresponding to (\ref{vst}) to a non-trivial class. We will prove the theorem case by case:

1) $S = f_*f^*\OO\tensor \omega^j$: this cannot happen since $\Ext^1(f_*f^*\OO\tensor \omega^j, \OO) = 0$ by Proposition \ref{ExtEaa}. 

2) $S = E_\alpha\tensor \omega^j$: Since the only non-split extension of $\OO$ and an $E_\alpha\tensor \omega^j$ is $f_*f^*\OO$ with $j=-2$ (as follows from Proposition \ref{CohEa}), we get a diagram (with rows and columns exact) of the form:

\[\xymatrix{&&0\ar[d]&0 \ar[d]&\\
0\ar[r]&\OO\ar[r]\ar[d]^=&f_*f^*\OO\ar[r]\ar[d]&E_\alpha\tensor \omega^{-2}\ar[r]\ar[d]&0\\
0\ar[r]&\OO\ar[r]&X \ar[d]\ar[r]&Y\ar[r]\ar[d]&0\\
&&Y-(E_\alpha\tensor \omega^{-2}) \ar[d]\ar[r]^=& Y - (E_\alpha\tensor \omega^{-2})\ar[d]&\\
&&0&0&}\]

The left vertical extension is trivial since $\Ext^1(Y-(E_\alpha\tensor \omega^{-2}), f_*f^*\OO) = 0$ by Proposition \ref{ExtEaa}. Therefore, 
\[X \cong f_*f^*\OO\oplus (Y-(E_\alpha\tensor \omega^{-2})).\] 

3) $S = \omega^j$: Since the only non-split extension of $\OO$ and an $\omega^j$ is $E_\alpha$ with $j=-2$, we get a diagram (with rows and columns exact) of the form:

\[\xymatrix{&&0\ar[d]&0\ar[d]&\\
0\ar[r]&\OO\ar[r]\ar[d]^=&E_\alpha\ar[r]\ar[d]&\omega^{-2}\ar[r]\ar[d]&0\\
0\ar[r]&\OO\ar[r]&X \ar[d]\ar[r]&Y\ar[r]\ar[d]&0\\
&&Y-\omega^{-2}\ar[d]\ar[r]^=&Y - \omega^{-2} \ar[d]&\\
&&0&0&}\]

If the left vertical extension in the diagram is non-split, there is a standard summand $S'$ of $Y-\omega^{-2}$ such that the map $\Ext^1(Y-\omega^{-2}, E_\alpha) \to \Ext^1(S',E_\alpha)$ induced by the inclusion sends the element $y\in\Ext^1(Y-\omega^{-2},E_\alpha)$ corresponding to the left vertical extension to a non-trivial class. If $S' \cong \omega^l$, then the argument is similar to the case before and we get $X \cong (Y-\omega^{-2} - \omega^{-4})\oplus f_*f^*\OO$. The case $S' \cong f_*f^*\OO\tensor\omega^l$ can again not occur because $\Ext^1(f_*f^*\OO\tensor\omega^l, E_\alpha) = 0$. Therefore, we can assume that $S'$ is isomorphic to $E_\alpha\tensor \omega^l$ for some $l$. By Proposition \ref{EaExtension}, the only non-trivial extension of $E_\alpha$ with some $E_\alpha \tensor \omega^l$ is $f_*f^*\OO\oplus\omega^{-2}$ with $l=-2$. So we can assume that we get a commutative diagram (with rows and columns exact) of the form:
\[\xymatrix{&&0\ar[d]&0\ar[d]&\\
   0\ar[r]&E_\alpha\ar[r]\ar[d]^=&f_*f^*\OO\oplus\omega^{-2}\ar[r]\ar[d]&E_\alpha\tensor\omega^{-2}\ar[r]\ar[d]&0\\
   0\ar[r]&E_\alpha\ar[r]&X\ar[d]\ar[r]&Y-\omega^{-2}\ar[r]\ar[d]&0\\
   &&Y-\omega^{-2}-(E_\alpha\tensor\omega^{-2})\ar[d]\ar[r]^=&Y-\omega^{-2}-(E_\alpha\otimes\omega^{-2})\ar[d]&\\
   &&0&0&
}\]

Pushing the left vertical extension forward along the projection map $f_*f^*\OO \oplus \omega^{-2} \to f_*f^*\OO$ produces the following diagram (with rows and columns exact): 
\[\xymatrix{&0\ar[d]&0\ar[d]&&\\
&\omega^{-2} \ar[r]^= \ar[d] & \omega^{-2} \ar[d]& &\\
0 \ar[r] & f_*f^*\OO \oplus \omega^{-2} \ar[r] \ar[d] & X \ar[r] \ar[d] & Y-\omega^{-2}-(E_\alpha\otimes\omega^{-2}) \ar[r] \ar[d]^=& 0 \\
0 \ar[r] & f_*f^*\OO										\ar[r] \ar[d] & Z \ar[r] \ar[d] & Y-\omega^{-2}-(E_\alpha\otimes\omega^{-2}) \ar[r] & 0\\
&0&0&& } \]

The lower horizontal extension splits by Proposition \ref{ExtEaa} so that 
\[Z \cong f_*f^*\OO \oplus (Y - \omega^{-2}-(E_\alpha\otimes\omega^{-2})).\] Thus, there is a surjective map $X \to Z$ with a line bundle as kernel and $|I_Z| = |I_Y| - 1$ (where $I_Z$ is the set of standard line bundle summands as above), which is a contradiction to the minimality of $|I_Y|$. 

This completes the proof of Theorem A.
\end{proof}

\begin{cor}Let $\EE$ be a standard vector bundle on $\MM_{(3)}$ with $H^1(\MM_{(3)}; \EE\tensor \omega^k) = 0$ for all $k\in\Z$. Then the rank of $\EE$ is divisible by $3$.\end{cor}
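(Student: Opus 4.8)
The plan is to use the explicit classification of standard vector bundles from Theorem B, which says that every standard vector bundle $\EE$ on $\MM_{(3)}$ decomposes as
\[\EE \cong \bigoplus_I \omega^{n_i} \oplus \bigoplus_J E_\alpha\tensor\omega^{n_j} \oplus \bigoplus_K f_*f^*\OO\tensor \omega^{n_k}.\]
Since cohomology commutes with finite direct sums, the vanishing hypothesis $H^1(\MM_{(3)};\EE\tensor\omega^k) = 0$ for all $k\in\Z$ must hold summand-by-summand. The strategy is then to show that the summands of the first two types cannot occur, so that $\EE$ consists entirely of copies of $f_*f^*\OO\tensor\omega^{n_k}$, each of which has rank $3$; this immediately forces $3 \mid \rk\EE$.

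First I would rule out the line bundle summands. If $\omega^{n_i}$ appears, then choosing $k$ so that $n_i + k \equiv 2 \bmod 12$ gives $H^1(\MM_{(3)};\omega^{n_i+k}) \cong \Z/3 \neq 0$ by the cohomology calculation in Subsection~\ref{EllCoh}, contradicting the hypothesis. Next I would rule out the $E_\alpha$ summands: by Proposition~\ref{CohEa} we have $\Ext^1(\omega^{-k}, E_\alpha\tensor\omega^{n_j}) \cong \Ext^1(\omega^{-k-n_j}, E_\alpha)$, which is $\Z/3$ whenever $-k-n_j\equiv -4 \bmod 12$, i.e.\ $H^1(\MM_{(3)}; E_\alpha\tensor\omega^{n_j+k}) \neq 0$ for a suitable $k$, again contradicting the hypothesis. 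Here I am using the standard identification $H^1(\MM_{(3)};\FF\tensor\omega^k) \cong \Ext^1(\omega^{-k},\FF)$ for a vector bundle $\FF$.

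Finally, for the surviving summands I would invoke Lemma~\ref{VanishingofCohomology}, which shows $H^i(\MM_{(3)}; f_*\FF) = 0$ for $i>0$ and any quasi-coherent $\FF$ on $\MM_0(2)$; in particular $H^1(\MM_{(3)}; f_*f^*\OO\tensor\omega^{n_k}) \cong H^1(\MM_{(3)}; f_*f^*\omega^{n_k}) = 0$ for every $n_k$, so these summands are consistent with the hypothesis and are exactly the ones that can appear. Consequently $\EE \cong \bigoplus_K f_*f^*\OO\tensor\omega^{n_k}$ and $\rk\EE = 3\,|K|$ is divisible by $3$.

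I do not expect any serious obstacle here, since the corollary is essentially a bookkeeping consequence of Theorem~B together with the cohomology computations already established. The only point requiring a little care is making the translation between first cohomology of a twist and the relevant $\Ext^1$-group precise for each summand type, and confirming that the congruence conditions in Proposition~\ref{CohEa} and Subsection~\ref{EllCoh} are genuinely realized by some choice of $k\in\Z$ — but in every case the offending congruence class modulo $12$ is attained, so the contradictions go through.
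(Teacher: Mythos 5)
Your proof is correct and follows essentially the same route as the paper: decompose $\EE$ via Theorem B, observe that any $\omega^{n_i}$ or $E_\alpha\tensor\omega^{n_j}$ summand would have non-vanishing $H^1$ after a suitable twist (the paper twists by $\omega^{2-j}$ and $\omega^{4-j}$ respectively, exactly your choice of $k$), and conclude only rank-$3$ summands $f_*f^*\OO\tensor\omega^{n_k}$ survive. The only difference is cosmetic: you additionally verify via Lemma \ref{VanishingofCohomology} that the surviving summands are consistent with the hypothesis, which the paper omits since it is not needed for the conclusion.
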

\begin{proof}By the main theorem of this section, $\EE$ is a sum of vector bundles of the form $\omega^j$, $E_\alpha \tensor \omega^j$ and $f_*f^*\OO\tensor \omega^j$. But $\omega^j\tensor \omega^{2-j}$ and $E_\alpha\tensor \omega^j\tensor \omega^{4-j}$ have non-trivial first cohomology. Therefore, $\EE$ is a sum of vector bundles of the form $f_*f^*\OO\tensor \omega^j$. These have rank $3$. \end{proof}

\section{Vector Bundles on $\MM_{(2)}$}\label{vec2}
We will turn now our attention to $\MM_{(2)} = \MM_{\Z_{(2)}}$, where the situation is in some respects quite different from bigger primes; we will see that we have here infinitely many indecomposable vector bundles (of arbitrary high rank). 

Recall that we have a $GL_2(\F_3)$-Galois covering $\MM(3)_{(2)} \to \MM_{(2)}$ for $\MM(3)$ the moduli stack of elliptic curves with level-$3$ structure at the prime $2$. Set $G= GL_2(\F_3)$. 

Consider the elliptic curve $E\co y^2 +y = x^3$ over $\overline{\F}_2$. This has, according to \cite{Sil09}, III.10.1, automorphism group $S$ of order $24$.  By \cite[2.7.2]{K-M85}, the morphism $S\to G$ (given by the operation of $S$ on $E[3]$) is injective. Using elementary group theory, we get that $G$ has a unique subgroup of order $24$, namely $SL_2(\F_3)$; thus $S$ embeds onto $SL_2(\F_3)$. The group $SL_2(\F_3)$ has as a $2$-Sylow group the quaternion group $Q$, the multiplicative subgroup of the quaternions generated by $i$ and $j$. This defines an action of $Q$ on $E$. 

Since the finite group scheme $E[3]$ over $\overline{\F}_2$ is isomorphic to $(\Z/3)^2$, we can choose a level-$3$ structure on $E$. This gives the following $2$-commutative diagram 
\[\xymatrix{\Spec \overline{\F}_2 \times Q \ar[r]\ar[d]^{\pr_1}& \MM(3)_{(2)}\times G \ar[r]\ar[d] & \Spec \Z \times G \ar[d]^{\pr_1}\\
\Spec{\overline{\F}_2} \ar[d]\ar[r] & \MM(3)_{(2)} \ar[d]\ar[r] & \Spec \Z \ar[d]\\
\Spec{\overline{\F}_2}//Q \ar[r]^e & \MM_{(2)} \ar[r]^{\pi} & \Spec \Z // G } \]

Thus, we get functors $I\co \Z[G]\modules \to \QCoh(\MM_{(2)})$ and $R\co \QCoh(\MM_{(2)}) \to \overline{\F}_2[Q]\modules$ given by $\pi^*$ and $e^*$, composed with the Galois descent equivalence, respectively. As $Q$ acts on $E[3]$ via the inclusion $Q\subset G$, the functor $RI$ is given upto to isomorphism by tensoring with $\overline{\F}_2$ and restricting to $Q\subset G$.

Our next aim is to define certain integral $G$-representations, which will allow us (via the functor $I$) to prove the existence of an infinite family of indecomposable vector bundles on $\MM_{(2)}$. There is a family of $C_2\times C_2$-representations over $\Z$ given as $M_n = \Z x_1\oplus  \cdots \oplus \Z x_n \oplus \Z y_0\oplus \cdots \oplus \Z y_n$ and 
\begin{eqnarray*}(g_1 + (-1)^i)x_i = y_{i-1}, \hspace{1cm}(g_2+(-1)^i)x_i = y_i,\\
 (g_1-(-1)^i)y_i = (g_2+(-1)^i)y_i = 0,
\end{eqnarray*}
where $g_1$ and $g_2$ generate $C_2\times C_2$ (see \cite{H-R62}, 6.2). The modules $\overline{M_n} = M_n \otimes_{\Z} \overline{\F}_2$ (and, hence, also the $M_n$) are indecomposable as $C_2\times C_2$-modules (see \cite{H-R61}, Proposition 5(ii) and its corollary). The same holds if we pull them back to representations of $Q$ via the surjective morphism $\rho\co Q\to C_2\times C_2$ (given by dividing out $i^2\in Q$). Indeed, as there is no $C_2\times C_2$-equivariant idempotent on $\overline{M_n}$, there is also no $Q$-equivariant idempotent on $\overline{M_n}$. By abuse of notation, we denote $\rho^*\overline{M_n}$ also by $\overline{M_n}$. 

Let $(Y_j)_{j\in J}$ be the collection of indecomposable vector bundles on $\MM_{(3)}$. Decompose $I\ind_Q^G M_n$ as $\bigoplus_{j\in J} a_j Y_j$ (with almost all $a_j = 0$). Thus, $RI\ind_Q^G M_n \cong \bigoplus_{j\in J}  a_j R(Y_j)$. Since 
\[RI\ind_Q^G M_n \cong \res_Q^G\ind_Q^G \overline{M_n} \cong \bigoplus_{G/Q}\overline{M_n},                                                                                                                                                                                    \]
we see that $\overline{M_n}$ is a direct summand of this module. Recall now the theorem of Krull--Remak--Schmidt:
\begin{thm}[Krull--Remak--Schmidt]Every noetherian and artinian module over a ring has a (up to permutation and isomorphisms) unique decomposition into indecomposable modules.\end{thm}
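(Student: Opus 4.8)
The plan is to treat existence and uniqueness separately. Since $M$ is both noetherian and artinian it admits a composition series, so it has a well-defined finite length $\ell(M)$, and I will induct on this invariant throughout. (For existence a single chain condition would already suffice, but having finite length is the cleanest common framework.)

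For existence I would induct on $\ell(M)$. If $M$ is indecomposable there is nothing to show; otherwise write $M = A \oplus B$ with $A$ and $B$ nonzero proper submodules, so that $\ell(A), \ell(B) < \ell(M)$, and apply the inductive hypothesis to each of $A$ and $B$ to express both, and hence $M$, as a finite direct sum of indecomposables.

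The real content is uniqueness, and the key input is Fitting's lemma: for any $\phi \in \End(N)$ with $N$ of finite length, the ascending chain $\ker \phi \subseteq \ker \phi^2 \subseteq \cdots$ and the descending chain $\im \phi \supseteq \im \phi^2 \supseteq \cdots$ both stabilize at a common exponent $n$, whence $N = \ker(\phi^n) \oplus \im(\phi^n)$. I would first prove this, and then deduce that when $N$ is indecomposable every endomorphism is either an automorphism or nilpotent, according to whether $\im(\phi^n)$ or $\ker(\phi^n)$ vanishes. From this it follows that the non-units of $\End(N)$ coincide with the nilpotents and are closed under addition: if $\phi$ is nilpotent then $\id - \phi$ is a unit with inverse $\sum_k \phi^k$, so a relation $\phi + \psi = \id$ with both $\phi$ and $\psi$ nilpotent is impossible. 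Hence $\End(N)$ is a local ring.

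With local endomorphism rings in hand I would run the standard exchange argument. Suppose $M = \bigoplus_{i=1}^{m} M_i = \bigoplus_{j=1}^{n} N_j$ with all summands indecomposable, and write $\iota_i, \pi_i$ (respectively $\iota'_j, \pi'_j$) for the inclusions and projections of the two decompositions. Expanding $\id_{M_1} = \pi_1 \iota_1 = \sum_{j} \pi_1 \iota'_j \pi'_j \iota_1$ inside the local ring $\End(M_1)$ and using that a sum of non-units is again a non-unit, some term $\pi_1 \iota'_j \pi'_j \iota_1$ must be a unit; after renumbering take $j = 1$. Then $\pi'_1 \iota_1 \colon M_1 \to N_1$ is a split monomorphism, and since $N_1$ is indecomposable and $M_1 \neq 0$ this forces it to be an isomorphism $M_1 \cong N_1$. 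A short diagram chase (the \emph{exchange lemma}) then shows $M = M_1 \oplus \bigoplus_{j \geq 2} N_j$, so that $\bigoplus_{i \geq 2} M_i \cong M/M_1 \cong \bigoplus_{j \geq 2} N_j$, and induction on $m$ completes the proof. I expect the main obstacle to be Fitting's lemma together with the deduction that $\End(N)$ is local, since everything downstream is a formal consequence; the exchange step itself requires only the care that a split monomorphism into an indecomposable finite-length module is an isomorphism.
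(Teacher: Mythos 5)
Your proof is correct: it is the standard Fitting--Azumaya argument (Fitting's lemma, hence every endomorphism of a finite-length indecomposable is an automorphism or nilpotent, hence local endomorphism rings, hence the exchange argument). Note, however, that the paper itself offers no proof of this statement at all --- it quotes Krull--Remak--Schmidt as a classical result and immediately applies it to the modules $R(Y_j)$ in the proof of Theorem C --- so there is no in-paper argument to compare yours against; yours simply supplies the standard missing classical proof. The only point where you should be slightly more explicit is the claim that the non-units of $\End(N)$ are closed under addition: ruling out $\phi + \psi = \id$ with both nilpotent is not quite enough by itself; one should reduce the general case $\phi + \psi = u$ ($u$ a unit) to this one by multiplying by $u^{-1}$ and observing that a non-unit times a unit is still a non-unit, hence nilpotent by your dichotomy. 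With that one line added, the argument is complete.
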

Thus, as $\overline{M_n}$ is finite-dimension as an $\overline{\F}_2$-vector space and hence noetherian and artinian, $\overline{M_n}$ has to be a summand of one of the $RY_j$. Since $\rk \overline{M_n} = 2n+1$, the rank of $RY_j$ (and hence of $Y_i$) must be at least $2n+1$. Therefore, $\MM_{(2)}$ has indecomposable vector bundles of arbitrary high rank. 

\appendix
\section{Quasi-Coherent Sheaves and Completions}
In this appendix we review some descent results for (quasi-)coherent sheaves on Artin stacks with respect to completions. Similar results appear for affine schemes and algebraic spaces instead of Artin stacks in \cite{F-R70} and \cite{MB96}. We will freely restrict to noetherian and separated situations when convenient. 

Throughout this section let $\XX$ be an Artin stack. In \cite{LMB00} and \cite[Definition 3.1]{Ols07}, the lisse-\'etale site corresponding to $\XX$ is defined: It is the full subcategory of $\XX$-schemes consisting of smooth $\XX$-schemes and a family $\{f_i\co U_i\to U\}$ is called a covering if each $f_i$ is \'etale and the maps are jointly surjective. Denote by $\XX_{lis-et}$ the corresponding topos. 

Call a sheaf of rings $\Ac$ on $\XX_{lis-et}$ \textit{flat} if for every smooth map $f\co U \to V$ of smooth $\XX$-schemes the associated map $f^{-1}\Ac_V \to \Ac_U$ is faithfully flat (\cite[Definition 3.7]{Ols07}). An example is the structure sheaf $\OO_\XX$. A sheaf of $\Ac$-modules is \textit{cartesian} if for every map $U\to V$ of smooth $\XX$-schemes the natural map
\[\Ac_U\tensor_{f^{-1}\Ac_V} f^{-1}\MM_V \to \MM_U\]
is an isomorphism. For $\Ac = \OO_\XX$ an $\OO_\XX$-module is cartesian if and only if it is quasi-coherent. Therefore, we denote the category of cartesian $\Ac$-modules by $\QCoh(\XX, \Ac)$ and the category of those cartesian $\Ac$-modules locally of finite presentation by $\Coh(\XX,\Ac)$.  

\begin{thm}[\cite{Ols07}, Proposition 4.4]\label{OlssonDes}Let $\XX$ be an Artin stack, $\Ac$ a flat sheaf of rings on $\XX_{lis-et}$ and $X \to \XX$ a smooth cover by an algebraic space. Denote by $X_\bullet$ the simplicial algebraic space given by $X_n \simeq X^{\times_\XX n+1}$. Denote furthermore by $\Mod_{\cart}(X^+_\bullet)$ the category of systems of cartesian modules $\FF_n$ on $(X_{n,lis-et}, \Ac)$ together with isomorphisms $\phi_\alpha\co \alpha_*\FF_m \to \FF_n$ for every injective order-preserving $\alpha\co [m] \to [n]$ such that 
 \[\phi_\alpha\circ (\alpha_*\phi_\beta) = \phi_{\alpha\circ\beta}\co  \alpha_*\beta_*\FF_k\cong (\alpha\circ\beta)_*\FF_k \to \FF_n\]
 for $\alpha\co[m]\to [n]$ and $\beta\co [k]\to [m]$ injective. Then the functor
\[\QCoh(\XX, \Ac) \to \Mod_{\cart}(X^+_\bullet)\]
given by pullback is an equivalence of categories. 
\end{thm}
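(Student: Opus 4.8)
The plan is to read the statement as faithfully flat descent for the smooth surjection $X \to \XX$, transcribed into the language of cartesian $\Ac$-modules on the lisse-\'etale topos. The functor in the statement is pure restriction: a cartesian module $\MM$ on $\XX$ restricts to cartesian modules $\MM_n$ on each $X_n$, and its cartesian structure supplies for every injective order-preserving $\alpha\co[m]\to[n]$ a canonical isomorphism $\phi_\alpha\co\alpha_*\MM_m \xrightarrow{\sim}\MM_n$; the coherence $\phi_\alpha\circ(\alpha_*\phi_\beta)=\phi_{\alpha\circ\beta}$ is automatic because it already holds for the comparison isomorphisms of a single cartesian sheaf. So the content is entirely the construction of a quasi-inverse, i.e.\ the effectivity and fullness of descent.

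First I would reduce the bookkeeping. Because $X_n\simeq X\times_\XX\cdots\times_\XX X$ with $X_0=X$, one has $X_0\times_\XX X_0\simeq X_1$ and $X_0\times_\XX X_0\times_\XX X_0\simeq X_2$, so the usual two- and three-fold overlaps of the cover $X\to\XX$ are exactly the levels $X_1$ and $X_2$. I would check that the datum indexed by injective $\alpha$ through level two, namely $(\FF_0,\FF_1,\FF_2)$ together with the $\phi_\alpha$ for the coface maps $d^0,d^1$ (and $d^0,d^1,d^2$), is precisely an ordinary descent datum for $\Ac$-modules along $X\to\XX$ with its cocycle condition; the higher $\FF_n$ and $\phi_\alpha$ are then uniquely determined by cartesianness and impose no further conditions, so $\Mod_{\cart}(X^+_\bullet)$ is equivalent to the classical category of descent data. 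This is where the restriction to injective order-preserving maps (the superscript $+$) is used: degeneracies would be redundant once the sheaves are cartesian.

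With that identification, I would invoke faithfully flat descent. A smooth surjection is in particular fpqc, and $\Ac$ is assumed flat, so $\Ac$-modules satisfy descent along $X\to\XX$; reducing further by a smooth affine cover of $X$ one lands in the fppf (indeed fpqc) descent for modules over rings, where effectivity is classical. Concretely, given $(\FF_n,\phi_\alpha)$ I would glue by setting, for each smooth $\XX$-scheme $U$ with base changes $U_n=U\times_\XX X_n$, the value $\MM(U)=\eq\bigl(\FF_0(U_0)\rightrightarrows\FF_1(U_1)\bigr)$ of the two coface restrictions, rigidified by the $\phi_\alpha$; coherence makes this functorial in $U$ and a sheaf on $\XX_{lis\mhyphen et}$. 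That $\MM$ is cartesian, and that its restriction recovers $(\FF_n)$ up to canonical isomorphism, can be checked after the faithfully flat base change to the $X_n$, where the comparison maps for $\MM$ unwind to the cartesian structure maps of the $\FF_n$; fullness of the pullback functor is the statement that $\Hom$ is a sheaf for the smooth topology, again fpqc descent.

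I expect the genuine obstacle to be not the descent but the non-functoriality of the lisse-\'etale topos. The site $\XX_{lis\mhyphen et}$ does not behave strictly functorially under the structure maps of $X_\bullet$ or under the augmentations $U_\bullet\to X_\bullet$, so the transition functors $\alpha_*$, the equalizer defining $\MM(U)$, and the comparison with $\Mod_{\cart}(X^+_\bullet)$ must all be organized through a carefully chosen diagram of topoi rather than taken naively; this is exactly the point where care is required in \cite{LMB00} and \cite{Ols07}. Once the topoi are arranged so that the relevant pullbacks exist and commute up to coherent isomorphism, the remaining verifications -- that the glued object is honestly cartesian and that the flatness of $\Ac$ lets one test isomorphisms locally -- are routine.
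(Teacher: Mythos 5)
The paper never proves this statement: it is imported verbatim from \cite{Ols07} (Proposition 4.4) and used as a black box in the appendix, so there is no internal proof to compare yours against. Judged on its own terms, your sketch is the standard argument and is, in outline, sound: the pullback functor carries the cartesian structure, so the $\phi_\alpha$ and their coherence come for free; $\Mod_{\cart}(X^+_\bullet)$ collapses to a $2$-truncated descent datum because cartesianness determines all higher levels; the quasi-inverse is the equalizer construction $\MM(U)=\eq\bigl(\FF_0(U_0)\rightrightarrows\FF_1(U_1)\bigr)$, which makes sense because $U_n = U\times_\XX X_n$ is smooth over $X_n$ whenever $U$ is smooth over $\XX$; and flatness of $\Ac$ enters exactly where you say it does, to commute the equalizer past the tensor products defining cartesianness and to test that unit and counit are isomorphisms after faithfully flat pullback.

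Two caveats. First, the restriction to injective order-preserving $\alpha$ is not merely a matter of the degeneracy data being redundant: the degeneracy maps $X_n\to X_{n+1}$ are diagonal-type sections of smooth maps, not smooth morphisms, and the lisse-\'etale topos has no well-behaved pullback along them, so the superscript $+$ is forced rather than a convenience (your closing paragraph shows you sense this, but the earlier ``redundant'' framing understates it). Second, your aside reducing to ``fpqc descent for modules over rings'' is not literally available for a general flat $\Ac$: cartesian $\Ac$-modules over an affine are modules over a sheaf of rings on the small \'etale site, not over a single ring -- compare Lemma \ref{AffineEquivalence}, which requires genuine work even for the completion sheaf $\varprojlim \OO/I^n$. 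Your equalizer construction works directly at the level of sheaves of modules, so nothing breaks, but that sentence should be deleted or justified. The point you explicitly defer -- arranging the topoi so that pullback along the face maps and the augmentations exists and is coherently compatible -- is precisely the technical content of \cite{Ols07}; a complete write-up would have to supply it, which is fair to note since the paper itself outsources exactly that core by citing Olsson.
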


We now want to discuss completions of stacks in the style of \cite{Con}.

\begin{prop}[\cite{LMB00}, 14.2.]There is a natural bijection between ideal sheaves on $\XX$ and closed substacks, sending an ideal $\II$ to a substack $V(\II)$.\end{prop}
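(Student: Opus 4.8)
The plan is to reduce the statement to the known correspondence for algebraic spaces by descent along a smooth presentation. First I would fix the two maps. Given a closed immersion $i\co \ZZ \to \XX$, set $\II_\ZZ = \ker(\OO_\XX \to i_*\OO_\ZZ)$; since a closed immersion is affine, $i_*$ is exact and preserves quasi-coherence, so $\II_\ZZ$ is a quasi-coherent ideal. Conversely, given a quasi-coherent ideal $\II \subseteq \OO_\XX$, the quotient $\OO_\XX/\II$ is a quasi-coherent sheaf of $\OO_\XX$-algebras, and I would define $V(\II)$ to be the closed substack cut out by it.

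The heart of the argument is to make sense of $V(\II)$ and to show the two operations are mutually inverse, which I would do on a presentation. Choose a smooth surjection $u\co X \to \XX$ with $X$ an algebraic space, and let $X_\bullet^+$ be the associated simplicial object as in Theorem \ref{OlssonDes}. By that theorem, a quasi-coherent ideal $\II$ is the same datum as a cartesian system $(\II_n)$ of ideal sheaves on the $X_n$; in particular $\II_0 \subseteq \OO_X$ is an ordinary ideal sheaf on the algebraic space $X$, defining a closed subspace $V(\II_0) \hookrightarrow X$ by the classical correspondence for spaces. The cartesian isomorphisms furnish canonical identifications $p_0^*\II_0 \cong \II_1 \cong p_1^*\II_0$ on $X_1 = X\times_\XX X$ satisfying the cocycle condition, i.e. an effective descent datum for the closed subspace $V(\II_0)$. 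Since closed immersions are affine and affine morphisms satisfy fppf descent (again a manifestation of descent for the quasi-coherent algebra $\OO_X/\II_0$), this datum descends $V(\II_0)$ to a closed substack $V(\II) \hookrightarrow \XX$ with $\OO_{V(\II)} = \OO_\XX/\II$.

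That the two constructions are mutually inverse can then be checked after pullback to $X$, where $\II_{V(\II)}$ restricts to $\ker(\OO_X \to \OO_{V(\II_0)}) = \II_0$ and $V(\II_\ZZ)$ restricts to $\ZZ\times_\XX X$; since $u$ is faithfully flat, descent is fully faithful and these space-level equalities propagate back to $\XX$. Naturality — compatibility with pullback $f^*$ along a morphism $f\co \YY \to \XX$, under which $f^*\II$ should correspond to the scheme-theoretic preimage of $V(\II)$ — is verified in the same manner, on a common presentation of $\YY$ and $\XX$.

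The main obstacle is the descent step: verifying that a cartesian ideal produces an effective descent datum for a closed subspace, i.e. that being a closed immersion satisfies smooth descent over $\XX$. This is exactly where Theorem \ref{OlssonDes} does the work, since it identifies the quotient algebra $\OO_\XX/\II$ with a cartesian $\OO_\XX$-module; the only additional input is that being a closed immersion is a smooth-local property on the target, so that effectivity of the descended algebra yields a genuine closed substack rather than merely a quasi-coherent sheaf of algebras.
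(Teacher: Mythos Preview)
The paper does not give a proof of this proposition; it is stated with a bare citation to \cite[14.2]{LMB00} and used as a black box. So there is nothing to compare your argument against in the paper itself.

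Your proposal is a reasonable reconstruction of the standard argument and is correct in outline: define the two maps as you do, and check everything on a smooth presentation using descent for quasi-coherent sheaves. One small comment: invoking Theorem~\ref{OlssonDes} here is slightly heavier than necessary, since the only content you need is faithfully flat descent for quasi-coherent $\OO_\XX$-algebras (so that $\OO_\XX/\II$ determines a closed substack via relative $\Spec$) together with the fact that being a closed immersion is smooth-local on the target. Both of these are already inputs to the theory of Artin stacks rather than consequences of Olsson's result, so the argument is not circular, but you could phrase the descent step without appealing to the full cartesian-module formalism.
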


\begin{defi}\label{DefCompl}Let $\XX$ be a locally noetherian Artin stack and $\II$ an ideal sheaf. The (formal) \textit{completion} of $\XX$ along $\II$ is defined to be the ringed site $\hat{\XX}$ with the same underlying site as $\XX$ and $\OO_{\hat{\XX}}$ given as $\varprojlim \OO_\XX/\II^n$. By the correspondence between ideal sheaves and closed substacks, we can also speak of the (formal) completion of $\XX$ along a closed algebraic substack. \end{defi}

From now on, let $\XX$ always be noetherian. As above, for a chosen ideal sheaf $\II$ we denote by $\QCoh(\hat{\XX})$ the category of cartesian $\OO_{\hat{\XX}}$-modules and by $\Coh(\hat{\XX})$ the full subcategory of modules locally of finite presentation. Note that $\hat{\XX}$ is flat since the completion $\hat{A} \to \hat{B}$ with respect to an ideal of $A$ is flat if $A\to B$ is flat for $A$ and $B$ noetherian. 

We like now to recall a variant of a result by Ferrand--Raynaud (\cite{F-R70}): 
\begin{prop}\label{F-R}Let $A$ be a noetherian ring,  $I =(t) \subset A$ a principal ideal and $\hat{A}$ the completion of $A$ with respect to $I$. Then we have cartesian squares of categories:
\[\xymatrix{\QCoh(\Spec A) \ar[r]\ar[d] & \QCoh(U) \ar[d] \\
\QCoh(\Spec \hat{A}) \ar[r] & \QCoh(\hat{U}) } \]
and
\[\xymatrix{\Coh(\Spec A) \ar[r]\ar[d] & \Coh(U) \ar[d] \\
\Coh(\Spec \hat{A}) \ar[r] & \Coh(\hat{U}) } \]
Here $U$ and $\hat{U}$ denote the complements of $V(I)$ in $\Spec A$ and $\Spec \hat{A}$, respectively. The equivalence of the fiber product to the upper left corner can be chosen functorial in $(A,I)$. \end{prop}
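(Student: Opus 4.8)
The plan is to prove the two cartesian squares by exhibiting, in each case, an explicit quasi-inverse to the gluing functor and checking the universal property of the fiber product directly. Concretely, an object of the fiber product $\QCoh(\Spec\hat A)\times_{\QCoh(\hat U)}\QCoh(U)$ is a triple $(M, \xi, \varphi)$ where $M$ is an $\hat A$-module, $\xi$ is a quasi-coherent sheaf on $U$, and $\varphi$ is an isomorphism between the restriction $M|_{\hat U}$ (i.e.\ the $\hat A[\frac1t]$-module $M[\frac1t]$) and the restriction $\xi|_{\hat U}$. I would define the candidate gluing functor sending such a triple to the $A$-module
\[
N \;=\; \ker\!\bigl(M \oplus \Gamma(U,\xi) \longrightarrow M[\tfrac1t]\bigr),
\]
where the map is the difference of the two localization maps identified via $\varphi$. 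The key point to verify is that $N$ is an $A$-module whose $\hat A$-completion recovers $M$ and whose restriction to $U$ recovers $\xi$, compatibly with $\varphi$; this is the content of the Ferrand–Raynaud descent along the open/closed decomposition $\{U, \Spf\hat A\}$ of $\Spec A$.

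The first main step is the coherent statement, since there the argument is cleanest: one uses that $t$ is a nonzerodivisor-free situation only up to the ideal $(t)$, and that completion is flat (as noted before Proposition \ref{F-R}, $\hat A$ is flat over $A$ when $A$ is noetherian), so that the sequence $0\to A\to \hat A\oplus \Gamma(U,\OO_U)\to \hat A[\frac1t]$ is exact. For a finitely presented module one reduces the statement to this ring-level exactness by presenting the module and using right-exactness of the relevant functors, together with the Artin–Rees / Krull intersection control on $t$-adic completion over a noetherian ring. I would establish that the natural comparison maps $\hat A\otimes_A N\to M$ and $N|_U\to \xi$ are isomorphisms by a local faithfully-flat check: it suffices to verify after passing to $\hat A$ and after inverting $t$, where in both cases the statement becomes tautological by construction of $N$ as a kernel. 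Essential surjectivity and full faithfulness of the gluing functor then follow from these two isomorphisms together with uniqueness of the kernel description.

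The second step upgrades to the quasi-coherent square. The subtlety is that completion and localization do not commute with arbitrary colimits as innocently as with finite presentations, so I would write an arbitrary quasi-coherent sheaf as a filtered colimit of its coherent (finitely generated) subsheaves and reduce to the coherent case already proven, using that filtered colimits are exact and commute with the formation of the kernel $N$ and with restriction to $U$. The compatibility of the completion functor with such filtered colimits in the noetherian setting is where one must be slightly careful, but Artin–Rees again guarantees that $t$-adic completion is exact on finitely generated modules and hence behaves well enough on the colimit. Finally, the asserted functoriality of the equivalence in the pair $(A,I)$ is immediate from the kernel formula, since a morphism $(A,(t))\to (A',(t'))$ with $t\mapsto$ a power of $t'$ (up to units) induces compatible maps on all four corners and hence on $N$.

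I expect the \emph{main obstacle} to be the quasi-coherent (non-finitely-presented) case: controlling the interaction between $t$-adic completion and infinite direct limits, and ensuring the kernel $N$ is genuinely quasi-coherent rather than merely an $A$-module with no local finiteness. Reducing cleanly to the coherent case via a filtered-colimit argument, and justifying that the comparison maps remain isomorphisms after passing to the colimit, is the technical heart; the coherent case itself, by contrast, is essentially a faithfully-flat descent computation that I would expect to go through routinely once the exact sequence $0\to A\to \hat A\oplus A_t\to \hat A_t$ is in hand.
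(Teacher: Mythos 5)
Your strategy runs opposite to the paper's: the paper obtains the \emph{quasi-coherent} square directly by citing \cite[Proposition 4.2]{F-R70} (formal glueing along the faithfully flat map $\Spec \hat{A} \sqcup U \to \Spec A$), and then gets the coherent square for free because coherence is detected by fpqc pullback; you instead try to prove the coherent case by hand and bootstrap to the quasi-coherent case by filtered colimits. That would be a legitimately different, self-contained route, but as written it fails at exactly the point where the theorem is genuinely hard.

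The gap is in your essential-surjectivity step, where you assert that both comparison maps become ``tautological'' after a faithfully flat check. Only one of them does. For $N = \ker\bigl(M \oplus \Gamma(U,\xi) \to M[\tfrac1t]\bigr)$, the map $N[\tfrac1t] \to \xi$ is indeed an isomorphism by pure diagram algebra (after inverting $t$ the kernel is the graph of the map $\xi \to M[\tfrac1t]$). But the map $\hat{A}\tensor_A N \to M$ is not tautological, and the proposed check is circular: descent along $A \to \hat{A} \times A[\tfrac1t]$ reduces you to verifying the map after $-\tensor_A \hat{A}$, i.e.\ to a statement about $(\hat{A}\tensor_A \hat{A})$-modules that is exactly as hard as the original claim. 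Indeed, the entire content of Ferrand--Raynaud (and of Beauville--Laszlo) is that one may glue over $\hat{U}$ alone, \emph{without} descent data over $\Spec(\hat{A}\tensor_A\hat{A})$; so the statement cannot be a formal consequence of fpqc descent plus the kernel formula. A correct argument must show $N/t^nN \to M/t^nM$ is an isomorphism for every $n$ (using that $t$ acts invertibly on $\Gamma(U,\xi)$ and that $A/t^n \cong \hat{A}/t^n\hat{A}$), then identify $\hat{A}\tensor_A N$ with $\varprojlim N/t^nN$ and $M$ with $\varprojlim M/t^nM$, which requires knowing that $N$ is finitely generated over the noetherian ring $A$ and that finitely generated $\hat{A}$-modules are $t$-adically complete; none of this appears in your sketch. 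Your colimit step has a further unresolved problem, which you yourself flag: an object of the fiber product is a triple $(M,\xi,\varphi)$, and since $A[\tfrac1t] \to \hat{A}[\tfrac1t]$ is \emph{not} faithfully flat, there is no evident way to exhaust such a triple by subtriples of finitely generated modules compatibly with $\varphi$. The paper's order of deduction sidesteps both difficulties.
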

\begin{proof}
 The map $\Spec \hat{A} \sqcup U \to \Spec A$ is a faithfully flat map of affine schemes. Thus, by \cite[Proposition 4.2]{F-R70}, we have a cartesian square
\[\xymatrix{\QCoh(\Spec A) \ar[r]\ar[d] & \QCoh(U) \ar[d] \\
 \QCoh(U)\times \QCoh(\Spec \hat{A}) \ar[r] & \QCoh(U) \times \QCoh(\hat{U}) }
\]
Here, the lower horizontal map is just the identity on the first factor and the map induced by the inclusion on the second. This gives the first square. The functoriality follows from the construction of the $A$-module as a pullback. 

To deduce the second square from the first, we only need to prove that the functor 
\[\QCoh(\Spec A) \to \QCoh(\Spec \hat{A}) \times \QCoh(U)\simeq \QCoh(\Spec \hat{A}\sqcup \QCoh(U))\]
 detects coherence. This holds since $\Spec\hat{A} \sqcup U \to \Spec A$ is fpqc (since $A$ is noetherian) and pullback along fpqc-morphisms detects coherence. 
\end{proof}

We will generalize this result to algebraic stacks. 

\begin{defi}A substack $\XX_0$ of $\XX$ is called \textit{locally principal closed} if there is a fpqc-covering $\{U_i \xrightarrow{f_i} \XX\}$ such that $\XX_0 \times_\XX U_i$ is the vanishing locus of a $t_i\in H^0(U_i; \OO_{U_i})$. This is a generalization of the notion of an effective Cartier divisor. 
\end{defi}

For a cosimplicial ring $A^\bullet$, we define a category $\Mod_{\cart}(A_+^\bullet)$ as follows: An object is a family $(M^n)_{n\geq 0}$ of $A^n$-modules together with an isomorphism $\phi_\alpha\co A^n\tensor_{A^m}M^m\to M^n$ for every injective order-preserving $\alpha\co [m]\to [n]$ such that the $\phi_\alpha$ are compatible with compositions as in Theorem \ref{OlssonDes}. Morphism are compatible families of morphisms $M^n \to N^n$ of $A^n$-modules. Define the full subcategory $\fMod_{\cart}(A_+^\bullet)$ on objects $(M^n)_{n\in \N}$ where every $M^n$ is a finitely presented $A^n$-module.

Before we use this definition, we have to define a preliminary lemma. 

\begin{lemma}\label{AffineEquivalence}Let $(t)\subset A$ be an ideal of a noetherian ring. Then $\Gamma\co \QCoh(\widehat{\Spec A}) \to \Mod(\hat{A})$ and $\Gamma\co \QCoh(\widehat{\Spec A}[\frac1t]) \to \Mod(\hat{A}[\frac1t])$ are equivalences. Here, $A$ and $\Spec A$ are (formally) completed with respect to $(t)$ -- the latter in the sense of Definition \ref{DefCompl}; furthermore, $\widehat{\Spec A}[\frac1t]$ is the ringed site with sheaf of rings $\OO_{\widehat{\Spec A}}[\frac1t]$ on $(\Spec A)_{lis-et}$. 

These equivalences restrict to equivalences \[\Gamma\co \Coh(\widehat{\Spec A}) \to \fMod(\hat{A})\] and \[\Gamma\co \Coh(\widehat{\Spec A}[\frac1t]) \to \fMod(\hat{A}[\frac1t])\] to the finitely generated modules.\end{lemma}
\begin{proof}First, we will show that $\Gamma\co \QCoh(\widehat{\Spec A}) \to \Mod(\hat{A})$ is exact. It is enough to show that a map $M\to N$ of $\hat{A}$-modules is surjective if $M\tensor_{\hat{A}} \hat{B} \to N\tensor_{\hat{A}} \hat{B}$ is surjective for some smooth surjective map $\Spec B \to\Spec A$ (where $B$ is also completed with respect to $(t)$). This is true since $\Spec \hat{B} \to \Spec \hat{A}$ is fpqc. 

The functor $\Gamma$ also commutes with arbitrary direct sums. Indeed, we can use \cite[II.1.5.1]{Tam94} to see that the lisse-\'etale site of $\Spec A$ is noetherian and for noetherian sites, all sheaf cohomology commutes with arbitrary direct sums by \cite[I.3.11.2]{Tam94}. 

There is a functor $\Lambda\co \Mod(\hat{A}) \to \QCoh(\widehat{\Spec A})$ associating to every $\hat{A}$-module $M$ the sheafification of the presheaf associating to every smooth $\Spec A$-scheme $X$ the module \[\OO_{\widehat{\Spec A}}(X) \tensor_{\hat{A}} M.\] Clearly $\Gamma(\Lambda(\bigoplus \hat{A})) \cong \bigoplus \hat{A}$ for arbitrary direct sums of $\hat{A}$ (which are flat $\hat{A}$-modules) and thus also $\Gamma(\Lambda(M)) \cong M$ for every $\hat{A}$-module $M$ since $\Gamma$ and $\Lambda$ are both right exact. Furthermore, $\Lambda\Gamma(\FF) \cong \FF$ since $\FF$ is cartesian. Both $\Lambda$ and $\Gamma$ are compatible with the action of $A$. Thus, $\Gamma$ restricts to an equivalence of the full subcategories where $t$ acts invertible, yielding the second equivalence. 
\end{proof}

Since we are only interested in applications to coherent modules, we will state the remaining results only for these.

\begin{thm}Let $\XX$ be a noetherian separated Artin stack and $\XX_0$ be a locally principal closed substack of $\XX$. Let $U$ be the complement of $\XX_0$ and $\hat{\XX}$ the completion of $\XX$ along $\XX_0$. Furthermore, let $\hat{U}$ be the ringed site with same underlying site as $\XX$ and $\OO_{\hat{U}} = \OO_{\hat{\XX}}[\frac1t]$ locally for $\XX_0$ the vanishing locus of $t$. 

Let $\Spec A \to \XX$ be a smooth cover such that $\Spec A\times_\XX \XX_0$ is cut out by a single element $t\in A$. Define a cosimplicial ring $A^\bullet$ by $\Spec A^n = \Spec A^{\times_\XX n+1}$. Then pullback defines equivalences
 \begin{align*} \Coh(\XX) &\to \fMod_{\cart}(A^\bullet_+) \\
  \Coh(\hat{\XX}) &\to \fMod_{\cart}(\widehat{A^\bullet}_+)\\
 \Coh(\hat{U}) &\to \fMod_{\cart}(\widehat{A^\bullet}[\frac1t]_+)
 \end{align*}
\end{thm}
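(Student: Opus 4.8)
The plan is to prove all three equivalences by combining the descent result of Theorem \ref{OlssonDes} with the affine computation in Lemma \ref{AffineEquivalence}, so that the entire statement reduces to checking that the three cartesian-module categories are exactly the images of the respective stacks under the simplicial descent machine. First I would observe that, because $\Spec A \to \XX$ is a smooth cover by an affine (hence algebraic space), Theorem \ref{OlssonDes} applies with $\Ac = \OO_\XX$ and identifies $\QCoh(\XX)$ with the category $\Mod_{\cart}(X^+_\bullet)$ of compatible systems of cartesian modules on the simplicial scheme $X_\bullet = \Spec A^\bullet$. Since each $X_n = \Spec A^n$ is affine, a cartesian $\OO$-module on $X_n$ is the same as an $A^n$-module via the usual affine equivalence, and the transition isomorphisms $\phi_\alpha\co \alpha_*\FF_m \to \FF_n$ translate into exactly the base-change isomorphisms $A^n \tensor_{A^m} M^m \to M^n$ appearing in the definition of $\Mod_{\cart}(A^\bullet_+)$. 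Restricting to modules locally of finite presentation on both sides gives the first equivalence $\Coh(\XX) \to \fMod_{\cart}(A^\bullet_+)$; the key point is that coherence is detected on the smooth affine cover, which holds because $\Spec A \to \XX$ is fpqc.

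For the second equivalence I would run the same argument for the completed ringed site $\hat\XX$. The completion $\hat\XX$ has the same underlying lisse-\'etale site as $\XX$ but structure sheaf $\OO_{\hat\XX} = \varprojlim \OO_\XX/\II^n$, and as remarked after Definition \ref{DefCompl} this sheaf of rings is flat (completion preserves flatness in the noetherian setting). Hence Theorem \ref{OlssonDes} applies verbatim with $\Ac = \OO_{\hat\XX}$, identifying $\QCoh(\hat\XX)$ with cartesian modules over the simplicial ringed site whose $n$-th term is $\widehat{\Spec A^n}$. Here I would invoke Lemma \ref{AffineEquivalence}: the global-sections functor $\Gamma\co \Coh(\widehat{\Spec A^n}) \to \fMod(\widehat{A^n})$ is an equivalence, so levelwise the cartesian modules are just finitely generated $\widehat{A^n}$-modules, and the descent data again become the base-change isomorphisms over the cosimplicial ring $\widehat{A^\bullet}$. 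This yields $\Coh(\hat\XX) \to \fMod_{\cart}(\widehat{A^\bullet}_+)$. The third equivalence follows identically, replacing $\OO_{\hat\XX}$ by $\OO_{\hat U} = \OO_{\hat\XX}[\tfrac1t]$ and using the second half of Lemma \ref{AffineEquivalence}, which gives $\Gamma\co \Coh(\widehat{\Spec A^n}[\tfrac1t]) \to \fMod(\widehat{A^n}[\tfrac1t])$; one only has to note that inverting the locally-defined element $t$ is compatible with the cosimplicial structure maps, which holds because the $t_i$ cutting out $\XX_0$ on the cover are pulled back along the face maps up to units.

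The main obstacle I expect is not any single equivalence but the bookkeeping needed to see that the affine-level equivalences of Lemma \ref{AffineEquivalence} assemble compatibly across the whole simplicial diagram $\Spec A^\bullet$. Concretely, one must verify that for each coface map the base-change isomorphism $\widehat{A^n} \tensor_{\widehat{A^m}} M^m \to M^n$ on completed (or completed-and-localized) modules corresponds under $\Gamma$ and $\Lambda$ to the geometric pullback isomorphism $\phi_\alpha$, and that the cocycle condition $\phi_\alpha \circ (\alpha_*\phi_\beta) = \phi_{\alpha\circ\beta}$ is preserved. Since $\Gamma$ and $\Lambda$ from Lemma \ref{AffineEquivalence} are mutually inverse equivalences compatible with the ring action and with flat base change, this compatibility is formal, but it is the step where the separatedness and noetherian hypotheses on $\XX$ are genuinely used to guarantee that completion commutes with the relevant fibre products $\Spec A^{\times_\XX n+1}$. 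Once this compatibility is in place, the three functors are levelwise equivalences of diagram categories and hence equivalences, completing the proof.
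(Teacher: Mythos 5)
Your proposal is correct and follows essentially the same route as the paper's own (very terse) proof: apply Theorem \ref{OlssonDes} to the three flat sheaves of rings $\OO_\XX$, $\OO_{\hat\XX}$, $\OO_{\hat U}$ (flatness being preserved by completion and localization), identify the levelwise categories via Lemma \ref{AffineEquivalence}, and use that the fpqc cover $\Spec A \to \XX$ detects coherence. Your additional care about assembling the affine equivalences compatibly over the cosimplicial diagram is exactly the bookkeeping the paper leaves implicit.
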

\begin{proof}
 The first equivalence is clear by Theorem \ref{OlssonDes} since $\Spec A \to \XX$ is a fpqc map and detects therefore coherence. The second and the third equivalence follow in the same way from Theorem \ref{OlssonDes}, the fact that completion and localization preserve flatness and the previous lemma. 
\end{proof}

\begin{thm}\label{glueing}Let $\XX$ be a noetherian separated Artin stack and $\XX_0$ be a locally principal closed substack of $\XX$. Let $U$ be the complement of $\XX_0$ and $\hat{\XX}$ the completion of $\XX$ along $\XX_0$. Furthermore, let $\hat{U}$ be the ringed site with same underlying site as $\XX$ and $\OO_{\hat{U}} = \OO_{\hat{\XX}}[\frac1t]$ locally for $\XX_0$ the vanishing locus of $t$. Then we have cartesian squares
\[\xymatrix{ \Coh(\XX) \ar[r]\ar[d] & \Coh(U) \ar[d] \\
\Coh(\hat{\XX}) \ar[r] & \Coh(\hat{U}). } \]
and
\[\xymatrix{ \Vect(\XX) \ar[r]\ar[d] & \Vect(U) \ar[d] \\
\Vect(\hat{\XX}) \ar[r] & \Vect(\hat{U}). } \]
\end{thm}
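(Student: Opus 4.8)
The plan is to deduce both cartesian squares from the affine-local statement already established. The key tool is the cosimplicial-ring description of coherent sheaves on each of the four ringed sites $\XX$, $U$, $\hat{\XX}$, $\hat{U}$, which the previous theorem supplies: choosing a smooth cover $\Spec A \to \XX$ along which $\XX_0$ is cut out by a single $t \in A$, and setting $\Spec A^\bullet = \Spec A^{\times_\XX \bullet+1}$, we have equivalences $\Coh(\XX) \simeq \fMod_{\cart}(A^\bullet_+)$, $\Coh(\hat{\XX}) \simeq \fMod_{\cart}(\widehat{A^\bullet}_+)$, and $\Coh(\hat{U}) \simeq \fMod_{\cart}(\widehat{A^\bullet}[\tfrac1t]_+)$; similarly $\Coh(U) \simeq \fMod_{\cart}(A^\bullet[\tfrac1t]_+)$, since $U$ is itself covered by $\Spec A[\tfrac1t]$ with the same nerve. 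First I would reduce the cartesian-square claim to a levelwise statement: a cartesian diagram of $\fMod_{\cart}$-categories built from a cartesian diagram of cosimplicial rings is again cartesian, provided the gluing data (the $\phi_\alpha$) glue compatibly. The point is that an object of the fiber product is a quadruple of cartesian module-systems together with the two isomorphisms matching them over $\widehat{A^\bullet}[\tfrac1t]$, and this is exactly the data of a single cartesian system over $A^\bullet_+$ once we know the matching holds in each cosimplicial degree.

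Next I would invoke Proposition \ref{F-R} (the Ferrand--Raynaud square) in each cosimplicial degree $n$: for the noetherian ring $A^n$ with principal ideal $(t)$ and completion $\widehat{A^n}$, that proposition gives a cartesian square of $\Coh$-categories
\[\xymatrix{\Coh(\Spec A^n) \ar[r]\ar[d] & \Coh(\Spec A^n[\tfrac1t]) \ar[d] \\
\Coh(\Spec \widehat{A^n}) \ar[r] & \Coh(\Spec \widehat{A^n}[\tfrac1t]), } \]
and by Lemma \ref{AffineEquivalence} this is the same as a cartesian square of finitely generated module categories over $A^n$, $A^n[\tfrac1t]$, $\widehat{A^n}$, $\widehat{A^n}[\tfrac1t]$. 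Crucially, Proposition \ref{F-R} asserts that the reconstruction is \emph{functorial} in the pair $(A,I)$, so these squares assemble over the cosimplicial structure maps. This functoriality is what lets the levelwise gluing isomorphisms be transported along each $\alpha\co[m]\to[n]$, so that a compatible system of fiber-product data on the four corners produces a cartesian system in $\fMod_{\cart}(A^\bullet_+)$, i.e. an object of $\Coh(\XX)$. Running the same reconstruction for morphisms gives the equivalence on the level of the whole square, proving the first cartesian square.

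For the second square, I would restrict the first to locally free objects. A coherent sheaf on $\XX$ is a vector bundle iff it is locally free, and local freeness can be tested after the fpqc pullback to $\Spec A$; but then it can be detected on the two affine pieces $\Spec\widehat{A^n}\sqcup\Spec A^n[\tfrac1t]$ since their disjoint union covers $\Spec A^n$ faithfully flatly. Thus an object of $\Coh(\XX)$ glued from the four corners is a vector bundle exactly when its images in $\Coh(\hat\XX)$ and $\Coh(U)$ are vector bundles, which is precisely the condition defining the fiber product in the $\Vect$-square. Here I must also check that ranks match across the gluing over $\hat U$, which forces the constant-rank condition to be consistent; this follows because the gluing isomorphism over $\widehat{A^\bullet}[\tfrac1t]$ is an isomorphism of finitely generated modules and hence preserves rank.

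I expect the main obstacle to be the bookkeeping of the gluing data in the cosimplicial direction: namely verifying that the functoriality in Proposition \ref{F-R} is strong enough that the reconstruction functor commutes with the face maps $\alpha_*$ up to the coherent isomorphisms $\phi_\alpha$, so that a fiber-product object really does descend to a single cartesian system rather than merely to a system satisfying the cocycle condition up to unspecified homotopy. Since everything here lives in ordinary (1-categorical) module categories and the $\phi_\alpha$ are strict isomorphisms, this should reduce to a diagram chase using the naturality clause of Proposition \ref{F-R}, but it is the step that requires genuine care rather than formal nonsense.
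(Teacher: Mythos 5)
Your proposal is correct and follows essentially the same route as the paper: reduce to the affine case via the cosimplicial descent equivalences of the preceding theorem, apply the Ferrand--Raynaud square (Proposition \ref{F-R}) together with Lemma \ref{AffineEquivalence} levelwise using the functoriality clause, and then obtain the $\Vect$-square by detecting local freeness along the fpqc map $\Spec \hat{A} \sqcup \Spec A[\frac1t] \to \Spec A$. The paper's proof is just a terser version of this argument, leaving implicit the cosimplicial bookkeeping you spell out.
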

\begin{proof}We begin by showing the first square to be cartesian. This is true if $\XX$ is an affine scheme and $\XX_0$ is cut out by a single element by Proposition \ref{F-R} and Lemma \ref{AffineEquivalence}. The general case follows from the last theorem by choosing a smooth cover $\Spec A \to \XX$ where $\XX_0$ is cut out by a single element $t$. 

A coherent sheaf on $\XX$, $\hat{\XX}$ or $\hat{U}$ is a vector bundle iff its restriction to $(\Spec A)_{\text{lis-et}}$ is. Since $\Spec \hat{A} \sqcup \Spec A[\frac1t] \to \Spec A$ is fpqc and being locally free is detected by fpqc maps, the second square follows. \end{proof}

\bibliographystyle{alpha}
\bibliography{Chromatic}
\end{document}